 \newtheorem{thm}{Theorem}[section]
 \newtheorem{lemma}[thm]{Lemma}
 \newtheorem{cor}[thm]{Corollary}
 \newtheorem{prop}[thm]{Proposition}
 \numberwithin{equation}{section}
\theoremstyle{definition}
\begin{document}

\title[Weighted endpoint bounds for Bergman and Cauchy-Szeg\H o projections]{Weighted endpoint bounds for the Bergman and Cauchy-Szeg\H o projections on domains with near minimal smoothness}
\author{Cody B. Stockdale}
\address{Cody B. Stockdale\hfill\break\indent 
 Department of Mathematics and Statistics\hfill\break\indent 
 Washington University in St. Louis\hfill\break\indent 
 One Brookings Drive\hfill\break\indent 
 St. Louis, MO, 63130 USA}
\email{codystockdale@wustl.edu}
\author{Nathan A. Wagner}
\address{Nathan A. Wagner \hfill\break\indent 
 Department of Mathematics and Statistics\hfill\break\indent 
 Washington University in St. Louis\hfill\break\indent 
 One Brookings Drive\hfill\break\indent 
 St. Louis, MO, 63130 USA}
\email{nathanawagner@wustl.edu}
\thanks{N. A. Wagner's research is supported in part by National Science Foundation grant DGE \#1745038.}

\begin{abstract}
We study the Bergman projection, $\mathcal{B}$, and the Cauchy-Szeg\H o projection, $\mathcal{S}$, on bounded domains with near minimal smoothness. We prove that $\mathcal{B}$ has the weak-type $(1,1)$ property with respect to weighted measures assuming that the underlying domain is strongly pseudoconvex with $C^4$ boundary and the weight satisfies the $B_1$ condition, and the same property for $\mathcal{S}$ on domains with $C^3$ boundaries and weights satisfying the $A_1$ condition. We also obtain weighted Kolmogorov and weighted Zygmund inequalities for $\mathcal{B}$ and $\mathcal{S}$ in their respective settings as corollaries.
\end{abstract}

\maketitle

\section{Introduction}
Let $D \subseteq \mathbb{C}^n$ be a domain. The Bergman space, $A^2(D)$, is defined by 
$$
    A^2(D):=L^2(D) \cap \text{Hol}(D),
$$
where $L^2(D)$ is the set of square integrable functions on $D$ (with respect to Lebesgue measure) and $\text{Hol}(D)$ is the set of holomorphic functions on $D$. Since $A^2(D)$ is a closed subspace of $L^2(D)$, there exists an orthogonal projection from $L^2(D)$ to $A^2(D)$ which is called the Bergman projection and denoted by $\mathcal{B}$. The Bergman projection can be viewed as an integral operator by
$$
    \mathcal{B}f(z)= \int_D K(z,w)f(w)\,dV(w),
$$
where $K$ is the reproducing kernel for $A^2(D)$ and $V$ represents the Lebesgue measure on $\mathbb{C}^n=\mathbb{R}^{2n}$. We are interested in investigating the boundedness properties of $\mathcal{B}$. 

By definition, it is clear that $\mathcal{B}$ acts as a bounded operator on $L^2(D)$. However, the $L^p(D)$ boundedness of $\mathcal{B}$ for $p\neq 2$ is more complicated. It was first shown by Zaharjuta and Judovi\v c in \cite{ZJ1964} that $\mathcal{B}$ has a bounded extension on $L^p(\mathbb{D})$ for all $1<p<\infty$, where $\mathbb{D}$ is the unit disk in $\mathbb{C}$. In \cite{FR1975}, Forelli and Rudin proved the $L^p(\mathbb{B}_n)$ estimates, where $\mathbb{B}_n$ is the unit ball in $\mathbb{C}^n$, using Schur's test. Later, in \cite{PS1977}, Phong and Stein generalized this result to strongly pseudoconvex domains $D$ with smooth boundary. More recently, in \cite{LS2012}, Lanzani and Stein relaxed the smoothness condition on $D$ and proved the $L^p(D)$ bounds when $D\subseteq \mathbb{C}^n$ is strongly pseudoconvex with $C^2$ boundary. Their methods rely on techniques from complex analysis, operator theory, and harmonic analysis. 

Notice that $\mathcal{B}$ is not bounded on $L^1(D)$ in general. This fact can be seen by taking $D=\mathbb{B}_n$ and noting that boundedness on $L^1(\mathbb B_n)$ would imply that $\mathcal B$ is bounded on $L^\infty(\mathbb B_n)$ by duality. This would then contradict the well-known Rudin-Forelli estimate
$$ 
    \sup_{z \in \mathbb B_n} \int_{\mathbb{B}_n} \frac{1}{|1-\langle z,w \rangle|^{n+1}}\,dV(w) = \infty,
$$
see \cite{Zhu}.

In certain settings, the failed $L^1(D)$ bound may be replaced with the following weak-type $(1,1)$ property: there exists $C>0$ such that 
$$
\|\mathcal{B}f\|_{L^{1,\infty}(D)}:=\sup_{\lambda>0}\lambda V(\{z\in D : |\mathcal{B}f(z)|>\lambda\}) \leq C\|f\|_{L^1(D)}
$$
for all $f \in L^1(D)$. In other words, $\mathcal{B}$ acts as a bounded operator from $L^1(D)$ to $L^{1,\infty}(D)$, where $L^{1,\infty}(D)$ is the space of functions $f$ on $D$ for which $\|f\|_{L^{1,\infty}(D)}<\infty$. %Via interpolation, the weak-type $(1,1)$ estimate implies that $\mathcal{B}$ is bounded on $L^p(D)$ for $1<p<\infty$.
In particular, McNeal proved the weak-type $(1,1)$ estimate for three different classes of domains, all with smooth boundary, in \cite{M1994}: finite type domains in $\mathbb{C}^2$, decoupled, finite type domains in $\mathbb{C}^n$, and convex, finite type domains in $\mathbb{C}^n$. The same methods with little modification also apply to strongly pseudoconvex domains with smooth boundary. The proof involves defining an appropriate quasi-metric on $D$ and using real variable techniques. See also \cite{DHZZ2001} for a direct proof of the weak-type $(1,1)$ estimate in the case $D=\mathbb{D}\subseteq \mathbb{C}$. 
 
Recent attention has been given to understanding weighted bounds for the Bergman projection. We call a locally integrable function that is positive almost everywhere a weight. For a weight $\sigma$, we denote
$$
    L_{\sigma}^p(D) := \left\{f:D\rightarrow \mathbb{C} \,\,:\,\, \int_D |f|^p\sigma \, dV < \infty\right\}.
$$
For the case $D=\mathbb{B}_n$, Bekoll\'e proved in \cite{B198182} that $\mathcal{B}$ extends boundedly on $L_{\sigma}^p(\mathbb{B}_n)$ for $1<p<\infty$ if and only if $\sigma$ satisfies the $B_p$ condition: 
$$
[\sigma]_{B_p}:=\sup_{\substack{B(z,r)\\ r> d(z,bD)}}\left(\frac{1}{V(B(z,r))}\int_{B(z,r)}\sigma \, dV\right)\left(\frac{1}{V(B(z,r))}\int_{B(z,r)}\sigma^{-\frac{1}{p-1}} \, dV\right)^{p-1}<\infty.
$$
Here we use the notation $B(z,r)$ to represent a quasi-ball centered at $z$ with radius $r$ and $d(z,bD)$ to denote the quasi-distance from the point $z$ to the boundary of $D$ with respect to a certain quasi-metric defined on $\overline{\mathbb{B}_n}\times\overline{\mathbb{B}_n}$. Bekoll\'e also addressed the case $p=1$ for $D=\mathbb{B}_n$ by proving that $\mathcal{B}$ extends boundedly from $L_{\sigma}^1(\mathbb{B}_n)$ to $L_{\sigma}^{1,\infty}(\mathbb{B}_n)$ if and only if $\sigma$ satisfies the $B_1$ condition: 
$$
[\sigma]_{B_1}:= \sup_{\substack{B(z,r)\\ r> d(z,bD)}}\left(\frac{1}{V(B(z,r))}\int_{B(z,r)}\sigma \, dV\right)\|\sigma^{-1}\|_{L^{\infty}(B(z,r))}<\infty.
$$
The above result for $1<p<\infty$ was recently extended to the near minimal smoothness case where $D$ is a strongly pseudoconvex bounded domain with $C^4$ boundary by the second author and Wick in \cite{WW2020}. In Section \ref{BergmanSection}, we use the same condition for $B_1$ weights with respect to the quasi-metric defined therein.

The first main result of this paper is the weighted weak-type $(1,1)$ estimate for the Bergman projection on domains with near minimal smoothness.
\begin{thm}\label{BergmanWeakType} 
If $D \subseteq \mathbb C^n$ is a strongly pseudoconvex bounded domain with $C^4$ boundary and $\sigma$ is a $B_1$ weight on $D$, then the Bergman projection $\mathcal{B}$ extends boundedly from $L_{\sigma}^1(D)$ to $L_{\sigma}^{1,\infty}(D)$. That is, there exists $C>0$ such that 
$$
\|\mathcal{B}f\|_{L_{\sigma}^{1,\infty}(D)}:=\sup_{\lambda>0}\lambda\sigma(\{z \in D: |\mathcal{B}f(z)|>\lambda\})\leq C\|f\|_{L_{\sigma}^1(D)}
$$
for all $f \in L_{\sigma}^1(D)$.
\end{thm}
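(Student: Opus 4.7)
The plan is to run a Calder\'on-Zygmund weak-type $(1,1)$ argument on the quasi-metric space of homogeneous type $(D,d,V)$ of \cite{WW2020}, adapted throughout to the weighted measure $\sigma\,dV$. The three main inputs are: the doubling of $\sigma\,dV$ coming from $B_1$; the $L_\sigma^2(D)$ boundedness of $\mathcal{B}$ from \cite{WW2020}, which applies because the elementary inclusion $B_1 \subseteq B_2$ holds (each $L^q$-average of $\sigma^{-1}$ is dominated by $\|\sigma^{-1}\|_{L^\infty(B)}$); and the Lanzani-Stein size and H\"older estimates for $K$ in the quasi-metric $d$, which depend on the $C^4$ boundary hypothesis.

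Fix $f \in L_\sigma^1(D)$ and $\lambda > 0$ and perform the weighted Calder\'on-Zygmund decomposition at height $\lambda$ on $(D,d,\sigma\,dV)$: extract pairwise-disjoint quasi-balls $\{Q_j\}$ with
\[
\lambda < \frac{1}{\sigma(Q_j)}\int_{Q_j}|f|\,\sigma\,dV \leq c_0\lambda
\]
and $|f|\leq \lambda$ $\sigma$-a.e.\ off $\bigcup_j Q_j$. Split $f = g + b$ in the usual way, with $g$ equal to $f$ off the union and equal to the $\sigma$-weighted average on each $Q_j$, so that $|g|\lesssim\lambda$ everywhere and $b = \sum_j b_j$ with $\mathrm{supp}\,b_j \subseteq Q_j$ and $\int_{Q_j} b_j\,\sigma\,dV = 0$. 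The good part is then straightforward: $\|g\|_{L_\sigma^2}^2 \lesssim \lambda\|f\|_{L_\sigma^1}$, so Chebyshev combined with the $L_\sigma^2$ boundedness of $\mathcal{B}$ yields
\[
\sigma\bigl(\{|\mathcal{B}g| > \lambda/2\}\bigr) \lesssim \|f\|_{L_\sigma^1}/\lambda.
\]
Setting $E^\ast = \bigcup_j Q_j^\ast$ for an appropriate dilation, doubling gives $\sigma(E^\ast) \lesssim \|f\|_{L_\sigma^1}/\lambda$, so one is reduced to proving
\[
\int_{D\setminus E^\ast} |\mathcal{B}b|\,\sigma\,dV \lesssim \|f\|_{L_\sigma^1}.
\]

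For this reduced inequality I would exploit the cancellation $\int b_j\,\sigma\,dV = 0$ by rewriting, for each $j$ and a chosen center $w_j \in Q_j$,
\[
\mathcal{B}b_j(z) = \int_{Q_j}\Bigl(\frac{K(z,w)}{\sigma(w)} - \frac{K(z,w_j)}{\sigma(w_j)}\Bigr)\,b_j(w)\,\sigma(w)\,dV(w).
\]
Summing in $j$ and applying Fubini, matters reduce to the weighted H\"ormander-type bound
\[
\int_{D\setminus Q^\ast}\Bigl|\frac{K(z,w)}{\sigma(w)} - \frac{K(z,w_j)}{\sigma(w_j)}\Bigr|\,\sigma(z)\,dV(z) \lesssim 1
\]
uniformly in $w \in Q$, after which the elementary estimate $\sum_j \|b_j\|_{L_\sigma^1} \lesssim \|f\|_{L_\sigma^1}$ finishes the argument.

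The main obstacle is this weighted H\"ormander inequality. I would split the integrand into a kernel-difference term and a reciprocal-weight-difference term: the former controlled by the Lanzani-Stein H\"older regularity of $K$ in $d$ and a standard dyadic-shell summation in the complement of $Q$, and the latter (involving $\sigma(w)^{-1} - \sigma(w_j)^{-1}$) controlled by the Lanzani-Stein size bound for $K$ together with the tight $B_1$ control on $\sigma$-oscillations over $Q$. The delicate point, and the reason $C^4$ regularity and $B_1$ enter simultaneously, is that both the geometry of the quasi-balls and the H\"older estimate on $K$ degenerate as $Q$ approaches $bD$, so the shell sums must be run uniformly all the way to the boundary.
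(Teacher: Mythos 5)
Your strategy---a Calder\'on--Zygmund decomposition plus a H\"ormander-type condition applied directly to the Bergman kernel $K(z,w)$---is essentially McNeal's argument for smooth domains, but it cannot be carried out in the $C^4$ setting, and this is the central obstruction the paper is designed to circumvent. Your proof rests on ``the Lanzani--Stein size and H\"older estimates for $K$ in the quasi-metric $d$.'' No such estimates exist for the Bergman kernel itself when $bD$ is only $C^4$ (or $C^2$): the known pointwise size and smoothness bounds for $K(z,w)$ on strongly pseudoconvex domains come from Fefferman's asymptotic expansion and require $C^\infty$ boundary. What Lanzani--Stein (and Wagner--Wick) actually estimate is the explicitly constructed Cauchy--Fantappi\`e kernel of an auxiliary operator $\mathcal{T}$, built from the Levi polynomial of the defining function; the Bergman projection is then recovered through the Kerzman--Stein identity $\mathcal{B}=\mathcal{T}\,(I-(\mathcal{T}^*-\mathcal{T}))^{-1}$. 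The paper follows exactly this route: it proves the weighted weak-type bound for $\mathcal{T}$ (via a modified Calder\'on--Zygmund decomposition adapted to quasi-balls touching the boundary, in the spirit of Bekoll\'e), and separately proves that $I-(\mathcal{T}^*-\mathcal{T})$ is invertible on $L^1_\sigma(D)$ by showing $\mathcal{T}^*-\mathcal{T}$ is compact there. Both of these steps are substantive and are entirely absent from your outline; without them, or without a proof of pointwise estimates for $K$ on $C^4$ domains (which would itself be a significant new theorem), the argument does not close.

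A secondary but also genuine problem is your weighted H\"ormander condition. By performing the decomposition with respect to $\sigma\,dV$ and imposing $\int_{Q_j} b_j\,\sigma\,dV=0$, you are forced to estimate the term involving $\bigl|\sigma(w)^{-1}-\sigma(w_j)^{-1}\bigr|\,|K(z,w_j)|$ integrated against $\sigma(z)\,dV(z)$ over $D\setminus Q^*$. The $B_1$ condition gives no modulus-of-continuity or small-oscillation control on $\sigma$ within a quasi-ball (a $B_1$ weight may oscillate by a fixed multiplicative factor at every scale), and the size bound on the kernel alone produces a logarithmically divergent shell sum, so this term is not $\lesssim 1$. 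The standard fix---and what Bekoll\'e-type arguments do---is to decompose with respect to Lebesgue measure, keep $\int b_j\,dV=0$, and use $B_1$ only to convert $\sup_{\text{shells}}\sigma$-averages into $\inf_{Q_j}\sigma\le\sigma(w)$; the paper's version for the positive majorant $\Gamma$ of $\mathcal{T}_1$ avoids cancellation altogether by replacing each bad piece with its average on its ball and invoking the $L^2_\sigma$ bound. You should also note that $\sigma\,dV$ is doubling only for quasi-balls with radius exceeding the distance to $bD$, so a Calder\'on--Zygmund decomposition on all of $D$ requires the kind of case split the paper makes in its Lemma on the modified decomposition.
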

%\noindent Above we have used the notation $\sigma(E)= \int_E \sigma\,dV$ for measurable $E\subseteq \mathbb{C}^n$. 

We remark that Theorem \ref{BergmanWeakType} is new even in the unweighted setting ($\sigma=1$). In this case, Theorem \ref{BergmanWeakType} can be viewed as an extension of  McNeal's results of \cite{M1994} to domains with near minimal smoothness and also of the work of Lanzani and Stein in \cite{LS2012} to address the behavior at the $p=1$ endpoint. In fact, Theorem \ref{BergmanWeakType} and an interpolation argument imply the $L^p(D)$, $1<p<\infty$, boundedness result of \cite{LS2012} in the case of $D$ having $C^4$ boundary. With $B_1$ weights, Theorem \ref{BergmanWeakType} generalizes Bekoll\'e's endpoint weak-type result of \cite{B198182} to domains with near minimal smoothness and extends the work in \cite{WW2020} to address the $p=1$ endpoint.

The weak-type estimate of Theorem \ref{BergmanWeakType} implies some other useful endpoint bounds, generalizing results in \cite{DHZZ2001}. In particular, one has the following weighted Kolmogorov inequality:
\begin{cor}\label{KolmogorovBergman} 
If $D\subseteq \mathbb{C}^n$ is a strongly pseudoconvex bounded domain with $C^4$ boundary, $\sigma \in B_1$, and $0<p<1$, then the Bergman projection $\mathcal{B}$ extends boundedly from $L^1_\sigma(D)$ to $L^p_\sigma(D).$ That is, there exists $C>0$ such that
$$
\|\mathcal{B}f\|_{L_{\sigma}^p(D)}\leq C \|f\|_{L_{\sigma}^1(D)}
$$
for all $f \in L_{\sigma}^1(D)$.
\end{cor}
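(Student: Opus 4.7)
The strategy is to deduce the $L^1_\sigma(D) \to L^p_\sigma(D)$ bound for $0<p<1$ from the weighted weak-type $(1,1)$ inequality of Theorem \ref{BergmanWeakType} via the standard Kolmogorov-type argument. Before doing the integral computation, I would note that $\sigma(D)$ is finite: because $D$ is bounded, it fits inside a single quasi-ball $B = B(z_0, R)$ with $R$ large enough so that $R > d(z_0, bD)$, and the $B_1$ hypothesis applied to this ball gives
\begin{equation*}
\int_D \sigma\, dV \le \int_B \sigma\, dV \le [\sigma]_{B_1}\, V(B)\, \|\sigma^{-1}\|_{L^\infty(B)} < \infty.
\end{equation*}

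Next I would fix $f \in L^1_\sigma(D)$, set $g = \mathcal{B}f$, and use the layer cake identity
\begin{equation*}
\|g\|_{L^p_\sigma(D)}^p = p\int_0^\infty \lambda^{p-1}\sigma\bigl(\{z \in D : |g(z)|>\lambda\}\bigr)\, d\lambda,
\end{equation*}
splitting the integral at a threshold $\lambda_0>0$ to be optimized. On $(0, \lambda_0]$ I would bound the distribution function trivially by $\sigma(D)$, contributing $\sigma(D)\lambda_0^p$. On $(\lambda_0, \infty)$ I would invoke Theorem \ref{BergmanWeakType} to replace the distribution function by $C\|f\|_{L^1_\sigma(D)}/\lambda$; since $p < 1$, the integral $p\int_{\lambda_0}^\infty \lambda^{p-2}\, d\lambda$ converges and yields $\tfrac{pC}{1-p}\|f\|_{L^1_\sigma(D)}\lambda_0^{p-1}$. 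Choosing $\lambda_0 = \|f\|_{L^1_\sigma(D)}/\sigma(D)$ balances the two terms and produces
\begin{equation*}
\|g\|_{L^p_\sigma(D)}^p \le C_p\, \sigma(D)^{1-p}\|f\|_{L^1_\sigma(D)}^p,
\end{equation*}
which gives the desired inequality after taking $p$-th roots.

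No real obstacle is anticipated: this is a routine passage from weak-type $(1,1)$ to strong $L^p$ with $p<1$ on a finite measure space. The only minor point worth verifying is that $\sigma(D)<\infty$, and that was addressed above using the $B_1$ definition. The final constant will depend on $p$, $[\sigma]_{B_1}$, the weak-type constant from Theorem \ref{BergmanWeakType}, and the size of $D$ in the quasi-metric.
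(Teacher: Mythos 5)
Your argument is correct and is essentially the paper's proof: the paper also deduces the corollary from Theorem \ref{BergmanWeakType} via a general Kolmogorov inequality on finite measure spaces (Theorem \ref{KolmogorovGeneral}), proved by exactly the layer-cake splitting you describe, with the finiteness of $\sigma(D)$ coming from the $B_1$ condition just as you check. The only cosmetic difference is the choice of threshold ($t=\|f\|_{L^1_\sigma(D)}$ in the paper versus your optimized $\lambda_0=\|f\|_{L^1_\sigma(D)}/\sigma(D)$), which does not affect the result.
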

\noindent Additionally, one also gets the following Zygmund inequality as a corollary:
\begin{cor}\label{ZygmundBergman}
If $D\subseteq \mathbb{C}^n$ is a strongly pseudoconvex bounded domain with $C^4$ boundary and $\sigma \in B_1$, then the Bergman projection $\mathcal{B}$ extends boundedly from $(L \log^+ L)_{\sigma}(D)$ to $L_{\sigma}^1(D)$. That is, there exists $C>0$ such that
$$
    \|\mathcal{B}f\|_{L_{\sigma}^1(D)}\leq C\|f\|_{(L\log^+ L)_{\sigma}(D)}
$$
for all $f \in (L \log^+ L)_{\sigma}(D)$.
\end{cor}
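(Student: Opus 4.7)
\emph{Proof proposal.} The plan is to derive Corollary~\ref{ZygmundBergman} from Theorem~\ref{BergmanWeakType} together with the weighted $L^p_\sigma(D)$ boundedness of $\mathcal{B}$ for some fixed $p>1$ (available from \cite{WW2020}, since the $B_1$-condition implies the $B_p$-condition for every $p>1$), by the classical good-part/bad-part layer cake argument of Zygmund adapted to the weighted setting. I will use that $\sigma(D)<\infty$, which is immediate from the $B_1$-condition applied to a single quasi-ball containing $D$.

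First I would write
$$
\|\mathcal{B}f\|_{L^1_\sigma(D)} = \int_0^\infty \sigma(\{z\in D: |\mathcal{B}f(z)|>\lambda\})\,d\lambda
$$
and split at $\lambda=1$. The contribution from $\lambda\in(0,1)$ is bounded by $\sigma(D)$. For $\lambda\geq 1$, decompose $f = g_\lambda + b_\lambda$ with $g_\lambda = f\chi_{\{|f|\leq\lambda\}}$ and $b_\lambda = f\chi_{\{|f|>\lambda\}}$, so that by linearity of $\mathcal{B}$,
$$
\sigma(\{|\mathcal{B}f|>\lambda\}) \leq \sigma(\{|\mathcal{B}g_\lambda|>\lambda/2\}) + \sigma(\{|\mathcal{B}b_\lambda|>\lambda/2\}).
$$
Theorem~\ref{BergmanWeakType} bounds the bad-part term by $C\lambda^{-1}\int_{\{|f|>\lambda\}}|f|\,d\sigma$, and a Fubini interchange after integration in $\lambda\in[1,\infty)$ produces the $L\log L$ term $C\int_D|f|\log^+|f|\,d\sigma$. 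The weighted $L^p_\sigma$ bound controls the good-part term by $C_p\lambda^{-p}\int_{\{|f|\leq\lambda\}}|f|^p\,d\sigma$; the analogous Fubini interchange, splitting the resulting $z$-integral according to $|f(z)|\leq 1$ or $|f(z)|>1$, yields a bound by $C_p(p-1)^{-1}(\sigma(D)+\|f\|_{L^1_\sigma(D)})$.

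Assembling these pieces gives the modular inequality
$$
\|\mathcal{B}f\|_{L^1_\sigma(D)} \leq C\left(\sigma(D) + \|f\|_{L^1_\sigma(D)} + \int_D |f|\log^+|f|\,d\sigma\right),
$$
and rewriting the right-hand side as a Luxembourg-norm estimate, by applying the above to $f/\|f\|_{(L\log^+L)_\sigma(D)}$ and invoking the defining property of the Luxembourg norm together with $\sigma(D)<\infty$, produces the homogeneous statement of Corollary~\ref{ZygmundBergman}. I do not anticipate a serious obstacle: the splitting estimate is entirely standard once Theorem~\ref{BergmanWeakType} is in hand, and the $B_1$-condition makes both the weighted $L^p$ bound for $\mathcal{B}$ and the finiteness of $\sigma(D)$ routine. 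The only step requiring care is the constant-tracking in the passage from the modular bound to the Luxembourg-norm inequality.
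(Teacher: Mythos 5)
Your proposal is correct and follows essentially the same route as the paper: the paper abstracts the argument into a general statement (Theorem \ref{ZygmundGeneral}) that weak-type $(1,1)$ plus an $L^2(X,\mu)$ bound on a finite measure space imply the $L\log^+L \to L^1$ estimate, proved by exactly your height-$\lambda$ good/bad splitting, layer-cake, and Fubini computation. The only cosmetic differences are that the paper fixes $p=2$ rather than a general $p>1$ and handles the passage from the modular bound to the Luxemburg norm via normalization and equivalent Young functions rather than by rescaling at the end.
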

\noindent Refer to Section \ref{BonusEstimates} for a precise definition of the Zygmund spaces $L \log^+ L$ and their norms. 

%Theorem \ref{BergmanWeakType} extends McNeal's result of \cite{M1994} to domains with near minimal smoothness. It also extends the work of Lanzani and Stein in \cite{LS2012} to address the behavior at the $p=1$ endpoint. In fact, Theorem \ref{BergmanWeakType} and an interpolation argument imply the $L^p(D)$, $1<p<\infty$, boundedness result of \cite{LS2012} in the case of $D$ having $C^4$ boundary. 

%It should be noted that since (Levi) pseudoconvexity is defined using second derivatives, the minimal level of smoothness that can be imposed on the defining function $\rho$ is $C^2$ smoothness. However, an analysis of the boundedness properties of the Bergman projection is much more complicated in the case of $C^2$ smoothness. This is the reason we restrict ourselves to the case when $D$ has $C^4$ boundary, or near minimal smoothness. 

We also study the Cauchy-Szeg\H o projection on domains with near minimal smoothness. The Hardy space, $H^2(bD)$, is defined to be the following closure in $L^2(bD)$:
$$
    H^2(bD):= \overline{\{f \in L^2(bD): f=F\mid_{bD}, \,\, F\in \text{Hol}(D), \,\, \text{and} \,\, F \in C^0(\overline{D})\}}.
$$
%of the set of functions $F:bD\rightarrow \mathbb{C}$ such that there exists continuous $f:\overline{D}\rightarrow \mathbb{C}$ with $F=\left.f\right|_{bD}$ and $f$ is holomorphic in $D$. 
Since $H^2(bD)$ is a closed subspace of $L^2(bD)$, there is an orthogonal projection from $L^2(bD)$ to $H^2(bD)$ which we call the Cauchy-Szeg\H o projection and denote by $\mathcal{S}$. We can view $\mathcal{S}$ as an integral operator via
$$
    \mathcal{S}f(z)=\int_{bD} K(z,w)f(w)\,dS(w),
$$
where $K$ is the reproducing kernel for $H^2(bD)$ and $S$ denotes the induced Lebesgue measure on $bD$.

Again, it is clear that $\mathcal{S}$ is a bounded operator on $L^2(bD)$. The bounds of $\mathcal{S}$ on $L^p(bD)$ for $1<p<\infty$ have a long history beginning in the case $D=\mathbb{D}$, where $\mathcal{S}$ is the Cauchy transform. In this case, the classical theorem of M. Riesz asserts that $\mathcal{S}$ acts as a bounded on $L^p(b\mathbb{D})$ for $1<p<\infty$. %See \cites{LS2017,PS1977} and the many references therein for more background on the Cauchy-Szeg\H o projection. 
Recently, the $L^p(bD)$ bounds for $\mathcal{S}$ on domains with minimal smoothness were proved by Lanzani and Stein in \cite{LS2017}. In particular, they showed that if $D \subseteq \mathbb{C}^n$ is strongly pseudoconvex and bounded with $C^2$ boundary, then $\mathcal{S}$ extends as a bounded operator on $L^p(bD)$ for $1<p<\infty$. 

The characterization of weighted bounds for $\mathcal{S}$ in the case $D=\mathbb{D}$ is given by the $A_p$ condition of Hunt, Muckenhoupt, and Wheeden from \cite{HMW1973}. For $1<p<\infty$, a weight $\sigma$ satisfies the $A_p$ condition if 
$$
    [\sigma]_{A_p}:=\sup_{B}\left(\frac{1}{S(B)}\int_B \sigma\,dS\right)\left(\frac{1}{S(B)}\int_B \sigma^{-\frac{1}{p-1}}\,dS\right)^{p-1}<\infty,
$$
where supremum is taken over all quasi-balls (with respect to the quasi-metric defined in Section \ref{CauchySzegoSection}) $B\subseteq bD$. When $p=1$, we say $\sigma$ is an $A_1$ weight if
$$
    [\sigma]_{A_1}:=\sup_{B}\left(\frac{1}{S(B)}\int_B \sigma\,dS\right)\|\sigma^{-1}\|_{L^{\infty}(B)}<\infty.
$$
The bounds of $\mathcal{S}$ on $L_{\sigma}^p(bD)$ for $1<p<\infty$ and $\sigma \in A_p$ were recently established in the near minimal smoothness case where $D$ is a strongly pseudoconvex bounded domain with $C^3$ boundary by the second author and Wick in \cite{WW2020}.

The second main result of this paper is the weighted weak-type $(1,1)$ estimate for the Cauchy-Szeg\H o projection on domains with near minimal smoothness.

\begin{thm}\label{CauchySzegoWeakType}
If $D \subseteq \mathbb{C}^n$ is a strongly pseudoconvex bounded domain with $C^3$ boundary and $\sigma$ is an $A_1$ weight on $bD$, then the Cauchy-Szeg\H o projection $\mathcal{S}$ extends boundedly from $L_{\sigma}^1(bD)$ to $L_{\sigma}^{1,\infty}(bD)$. That is, there exists $C>0$ such that 
$$
    \|\mathcal{S}f\|_{L_{\sigma}^{1,\infty}(bD)}:=\sup_{\lambda>0}\lambda \sigma(\{z\in bD: |\mathcal{S}f(z)|>\lambda\})\leq C\|f\|_{L_{\sigma}^1(bD)}
$$
for all $f \in L_{\sigma}^1(bD)$.
\end{thm}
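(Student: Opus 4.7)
The plan is to adapt the classical weak-type Calder\'on--Zygmund argument to the space of homogeneous type $(bD, d, S)$, where $d$ is the quasi-metric on $bD$ employed in \cite{LS2017} and \cite{WW2020}. Three ingredients will be used: (i) the Cauchy-Szeg\H o kernel $K(z,w)$ satisfies standard size and H\"ormander-type smoothness estimates relative to $d$, as established in \cite{LS2017}; (ii) the measure $S$ is doubling on $d$-quasi-balls, and since $A_1 \subseteq A_\infty$ the weight $\sigma$ is doubling as well; and (iii) $\mathcal{S}$ extends boundedly on $L^2_\sigma(bD)$ whenever $\sigma \in A_2 \supseteq A_1$, from \cite{WW2020}. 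The overall structure mirrors the argument for Theorem \ref{BergmanWeakType}, the main distinction being that on $bD$ the ambient measure is finite and the quasi-metric is the standard one on the boundary rather than one extending into the interior.

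Fix $\lambda>0$ and $f\in L^1_\sigma(bD)$. I would apply the Calder\'on--Zygmund decomposition of $f$ at height $\lambda$ with respect to $dS$, obtaining pairwise disjoint quasi-balls $\{B_j = B(z_j, r_j)\}$ with $\lambda < \frac{1}{S(B_j)}\int_{B_j}|f|\,dS \leq c\lambda$ and a splitting $f = g + \sum_j b_j$ satisfying $|g|\leq c\lambda$ a.e., $b_j$ supported in $B_j$, $\int b_j\,dS = 0$, and $\sum_j S(B_j) \lesssim \lambda^{-1}\|f\|_{L^1}$. For the good part, using $|g|^2 \leq c\lambda |g|$ a.e., Chebyshev, and the $L^2_\sigma$ boundedness of $\mathcal{S}$ give
$$
\sigma(\{|\mathcal{S}g|>\lambda/2\}) \leq \frac{4}{\lambda^2}\|\mathcal{S}g\|_{L^2_\sigma}^2 \lesssim \frac{1}{\lambda}\|g\|_{L^1_\sigma},
$$
and the $A_1$ estimate $\frac{\sigma(B_j)}{S(B_j)} \leq [\sigma]_{A_1}\inf_{B_j}\sigma$ yields $\|g\|_{L^1_\sigma} \lesssim (1+[\sigma]_{A_1})\|f\|_{L^1_\sigma}$ by direct bookkeeping (rewriting the mean value on $B_j$ as $\int_{B_j}|f(w)|\,\text{avg}_{B_j}\sigma\,dS(w)$ and invoking the $A_1$ pointwise bound).

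For the bad part, I would split $\sigma(\{|\mathcal{S}b|>\lambda/2\})$ into contributions from $E := \bigcup_j 2B_j$ and from $E^c$. The same $A_1$ estimate combined with $\lambda S(B_j) < \int_{B_j}|f|\,dS$ yields $\sigma(B_j) \leq [\sigma]_{A_1}\lambda^{-1}\int_{B_j}|f|\sigma\,dS$, so that $\sigma(E) \lesssim \lambda^{-1}\|f\|_{L^1_\sigma}$ by doubling of $\sigma$. For $E^c$, the cancellation $\int b_j\,dS = 0$ permits the standard rewriting
$$
\mathcal{S}b_j(z) = \int_{B_j}\bigl[K(z,w)-K(z,z_j)\bigr]\,b_j(w)\,dS(w), \qquad z \notin 2B_j,
$$
after which the key analytic input is an $A_1$-weighted H\"ormander estimate
$$
\int_{d(z,z_j)>2r_j}\bigl|K(z,w)-K(z,z_j)\bigr|\sigma(z)\,dS(z) \lesssim [\sigma]_{A_1}\sigma(w), \qquad w\in B_j.
$$
I would prove this by dyadically decomposing $E^c$ into annuli $\{2^k r_j < d(z,z_j)\leq 2^{k+1}r_j\}$, applying the unweighted smoothness bound on $K$ from \cite{LS2017} on each annulus, and controlling $\int_{B(z_j, 2^{k+1}r_j)}\sigma\,dS$ by $[\sigma]_{A_1}\sigma(w)\,S(B(z_j, 2^{k+1}r_j))$ (valid since $w\in B(z_j, 2^{k+1}r_j)$), after which the geometric series in $k$ converges because of the positive H\"older exponent in the smoothness estimate. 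Combining this with $\sum_j\|b_j\|_{L^1_\sigma} \lesssim (1+[\sigma]_{A_1})\|f\|_{L^1_\sigma}$ (by the same $A_1$-averaging trick used for $g$) and Chebyshev produces $\sigma(\{|\mathcal{S}b|>\lambda/2\}\setminus E) \lesssim \lambda^{-1}\|f\|_{L^1_\sigma}$, finishing the proof.

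The principal obstacle is ingredient (i): the size and H\"ormander-type regularity of $K$ relative to $d$ at the reduced smoothness $C^3$ are not implied by the classical Phong--Stein theory, which assumed $C^\infty$ boundaries. Fortunately the needed kernel estimates were essentially extracted in \cite{LS2017} and reused in the weighted setting in \cite{WW2020}, so my task reduces to invoking them correctly. Once those estimates are in hand, the remainder of the argument is the classical $A_1$ weak-type machinery above, with only careful quasi-metric bookkeeping (applying the quasi-triangle constant to pass from $d(z,w)$ to $d(z,z_j)$ for $z\in E^c$ and $w\in B_j$, and possibly replacing $2B_j$ by a suitable enlargement on which the H\"ormander integral converges) remaining.
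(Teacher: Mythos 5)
Your argument has a genuine gap at the step you yourself flag as ``the principal obstacle'': ingredient (i) is not available. Neither \cite{LS2017} nor \cite{WW2020} establishes size or H\"ormander-type smoothness estimates for the Cauchy--Szeg\H o kernel $K(z,w)$ itself on domains with $C^3$ (or $C^2$) boundary. The Phong--Stein estimates \cite{PS1977} for the Szeg\H o kernel require $C^\infty$ boundary, and at reduced smoothness the projection $\mathcal{S}$ is defined only abstractly as the orthogonal projection onto $H^2(bD)$; there is no explicit formula from which Calder\'on--Zygmund regularity of its kernel could be extracted. What \cite{LS2017} actually proves is that an explicitly constructed Cauchy--Fantappi\'e-type operator $\mathcal{C}$ decomposes as $\mathcal{C}^\sharp+\mathcal{R}$ with $\mathcal{C}^\sharp$ a Calder\'on--Zygmund operator relative to the quasi-metric $d$ and $\mathcal{R}$ a harmless remainder. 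Your entire bad-part estimate --- the cancellation rewriting and the weighted H\"ormander integral over dyadic annuli --- therefore cannot be run on $\mathcal{S}$ directly; it applies only to $\mathcal{C}^\sharp$. (Note also that the $L^2_\sigma$ boundedness of $\mathcal{S}$ you invoke as ingredient (iii) is itself obtained in \cite{WW2020} by circumventing this same obstruction, not by kernel estimates on $\mathcal{S}$.)

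The paper's proof is built precisely to avoid this: it uses the Kerzman--Stein identity $\mathcal{S}(I-(\mathcal{C}^*-\mathcal{C}))=\mathcal{C}$, proves that $\mathcal{C}$ is weak-type $(1,1)$ with respect to $\sigma\,dS$ (Proposition \ref{SzegoAuxiliaryWeakType}, where your Calder\'on--Zygmund machinery is legitimately applied to $\mathcal{C}^\sharp$ and the remainder $\mathcal{R}$ is handled by a Schur-type kernel bound), and separately proves that $I-(\mathcal{C}^*-\mathcal{C})$ is invertible on $L^1_\sigma(bD)$ via a compactness and spectral argument (Proposition \ref{SzegoAuxiliaryInvertible}). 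To repair your write-up you would need to insert this transference step: establish the weak-type bound for $\mathcal{C}$ by your CZ argument, then prove the $L^1_\sigma$ invertibility of $I-(\mathcal{C}^*-\mathcal{C})$ --- which is a substantive additional piece of work, not mere bookkeeping --- before concluding for $\mathcal{S}$.
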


We remark that Theorem \ref{CauchySzegoWeakType} is new even in the unweighted setting. Theorem \ref{CauchySzegoWeakType} can be viewed as a weighted extension of the work of Lanzani and Stein in \cite{LS2012} and of the second author and Wick in \cite{WW2020} to address the behavior at the $p=1$ endpoint in the case of near minimal smoothness.

Similar to the case of the Bergman projection, we obtain a weighted Kolmogorov ineqaulity and a weighted Zygmund inequality for the Cauchy-Szeg\H{o} projection.
\begin{cor}\label{KolmogorovSzego} If $D\subseteq \mathbb{C}^n$ is a strongly pseudoconvex bounded domain with $C^3$ boundary, $\sigma \in A_1$, and $0<p<1$, then the Cauchy-Szeg\H{o} projection $\mathcal{S}$ extends boundedly from $L^1_\sigma(bD)$ to $L^p_\sigma(bD)$. That is, there exists $C>0$ such that
$$
    \|\mathcal{S}f\|_{L_{\sigma}^p(bD)} \leq C \|f\|_{L_{\sigma}^1(bD)}
$$
for all $f \in L_{\sigma}^1(bD)$.
\end{cor}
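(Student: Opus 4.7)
The plan is to obtain Corollary \ref{KolmogorovSzego} as a direct consequence of the weak-type $(1,1)$ inequality established in Theorem \ref{CauchySzegoWeakType}, via a standard layer cake decomposition. The argument is structural: it uses only the weak-type conclusion of Theorem \ref{CauchySzegoWeakType} together with the finiteness of the total weighted measure $\sigma(bD)$.

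Concretely, since $D$ is bounded with $C^3$ boundary, $bD$ is compact, and since $\sigma$ is locally integrable, it follows that $M:=\sigma(bD)<\infty$. For $f\in L^1_\sigma(bD)$, write $E_\lambda=\{z\in bD:|\mathcal{S}f(z)|>\lambda\}$, so that the layer cake identity gives
$$
\|\mathcal{S}f\|_{L^p_\sigma(bD)}^p = p\int_0^\infty \lambda^{p-1}\sigma(E_\lambda)\,d\lambda.
$$
I would split this integral at a level $\lambda_0>0$ to be chosen. On $(0,\lambda_0)$ the trivial bound $\sigma(E_\lambda)\leq M$ gives a contribution of order $M\lambda_0^p$. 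On $(\lambda_0,\infty)$, Theorem \ref{CauchySzegoWeakType} gives $\sigma(E_\lambda)\leq C\|f\|_{L^1_\sigma(bD)}/\lambda$, which integrates (since $p<1$) to a contribution of order $\|f\|_{L^1_\sigma(bD)}\lambda_0^{p-1}/(1-p)$. Optimizing by choosing $\lambda_0$ proportional to $\|f\|_{L^1_\sigma(bD)}/M$ produces the bound
$$
\|\mathcal{S}f\|_{L^p_\sigma(bD)}\leq C_{p,M}\|f\|_{L^1_\sigma(bD)},
$$
as desired.

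The main analytic work has already been carried out in Theorem \ref{CauchySzegoWeakType}, so I do not expect any serious obstacle in the corollary itself. The only auxiliary point worth checking is the finiteness $\sigma(bD)<\infty$, which follows from compactness of $bD$ together with local integrability of the weight $\sigma$. One further notes in passing that the constant $C_{p,M}$ produced by the optimization degenerates like $1/(1-p)$ as $p\uparrow 1$, which is consistent with the failure of the strong $L^1$ bound and is therefore unavoidable at this level of generality.
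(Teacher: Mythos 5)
Your argument is correct and is essentially the paper's proof: the paper deduces the corollary from Theorem \ref{CauchySzegoWeakType} together with a general Kolmogorov-type lemma (Theorem \ref{KolmogorovGeneral}) proved by exactly your layer cake splitting, trivial bound $\sigma(bD)<\infty$ on the low levels, weak-type bound on the high levels, and optimization of the splitting level. The only cosmetic difference is the choice of cut point ($t=\|f\|_{L^1_\sigma}$ in the paper versus $\lambda_0\sim\|f\|_{L^1_\sigma}/M$ in your version), which does not affect the result.
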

\begin{cor}\label{ZygmundSzego}
If $D\subseteq \mathbb{C}^n$ is a strongly pseudoconvex bounded domain with $C^3$ boundary and $\sigma \in A_1$, then the Cauchy-Szeg\H o projection $\mathcal{S}$ extends boundedly from $(L \log L)_{\sigma}(bD)$ to $L_{\sigma}^1(bD)$. That is, there exists $C>0$ such that
$$
    \|\mathcal{S}f\|_{L_{\sigma}^1(bD)}\leq C\|f\|_{(L\log L)_{\sigma}(bD)}
$$
for all $f \in (L \log L)_{\sigma}(bD)$.
\end{cor}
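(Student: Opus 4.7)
The goal is to derive this Zygmund inequality from Theorem \ref{CauchySzegoWeakType} together with the weighted $L^2_\sigma(bD)$ boundedness of $\mathcal{S}$, which is available from \cite{WW2020} since $A_1\subset A_2$. The mechanism is the standard Calder\'on--Zygmund level-set decomposition combined with the layer cake formula, mirroring the classical derivation of Zygmund's inequality from a weak-type $(1,1)$ bound and a strong $(p,p)$ bound.

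For $f\in (L\log L)_\sigma(bD)$ and $t>0$, I would write $f=g_t+b_t$ with $g_t=f\chi_{\{|f|\le t\}}$ and $b_t=f-g_t$, so that $\{|\mathcal{S}f|>t\}\subseteq\{|\mathcal{S}g_t|>t/2\}\cup\{|\mathcal{S}b_t|>t/2\}$. Chebyshev combined with the weighted $L^2_\sigma$ bound controls the good part while Theorem \ref{CauchySzegoWeakType} controls the bad part, yielding
\[
\sigma(\{|\mathcal{S}f|>t\}) \;\le\; \frac{C}{t^2}\int_{\{|f|\le t\}}|f|^2\,\sigma\,dS \;+\; \frac{C}{t}\int_{\{|f|>t\}}|f|\,\sigma\,dS.
\]
The identity $\|\mathcal{S}f\|_{L^1_\sigma(bD)}=\int_0^\infty\sigma(\{|\mathcal{S}f|>t\})\,dt$ is then split at $t=1$; the contribution from $t\le 1$ is at most $\sigma(bD)$, which is finite since $bD$ is compact and $\sigma$ is locally integrable.

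For the tail $t>1$, I would substitute the two-term bound and exchange the order of integration via Fubini. The first summand reduces to $C\int\frac{|f|^2}{\max(|f|,1)}\,\sigma\,dS$, which is dominated by $C\|f\|_{L^1_\sigma(bD)}$, and the second summand evaluates to $C\int_{\{|f|>1\}}|f|\log|f|\,\sigma\,dS$. Summing these and invoking the equivalence between the Luxemburg norm of Section \ref{BonusEstimates} and $\|f\|_{L^1_\sigma(bD)}+\int|f|\log^+|f|\,\sigma\,dS$ on the finite $\sigma$-measure space $bD$ completes the estimate. The substantive analytic work is already absorbed into Theorem \ref{CauchySzegoWeakType} and the $L^2_\sigma$ bound from \cite{WW2020}, so the argument presents no genuine obstacle; the only point requiring care is matching the precise definition of $(L\log L)_\sigma(bD)$ adopted in Section \ref{BonusEstimates} to the integral expression produced by the Fubini computation above.
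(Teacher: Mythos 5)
Your argument is correct and is essentially the paper's proof: the paper abstracts exactly this level-set decomposition, layer-cake, and Fubini computation into a general principle (Theorem \ref{ZygmundGeneral}, valid for any linear operator on a finite measure space that is $L^2$-bounded and weak-type $(1,1)$) and then applies it with $\mu=\sigma\,dS$, using the $L^2_\sigma(bD)$ bound from \cite{WW2020} just as you do. The only point where the paper is more careful is the final Orlicz-space step: rather than asserting an equivalence between the Luxemburg norm and $\|f\|_{L^1_\sigma}+\int|f|\log^+|f|\,\sigma\,dS$ (the latter modular is not homogeneous, so this is not literally a norm equivalence), it normalizes $\|f\|_{(L\log^+L)_\sigma}=1$ and passes to an equivalent Young function, which is the standard repair of exactly this point.
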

\noindent

Throughout this paper, we use the notation $A \lesssim B$ to mean $A\leq CB$ for some $C>0$ that could possibly depend on $n$, anything intrinsic to $D$, or $A_1$, $B_1$ weight characteristics. Although we will not keep track of constants depending on the weights, we will explicitly mention whenever their conditions are used. We say $A \approx B$ if both $A\lesssim B$ and $B \lesssim A$. We use the notation $\langle f\rangle_{E, \mu}$ to denote the average $\frac{1}{\mu(E)}\int_E f\,d\mu$. We just write $\langle f\rangle_E$ to represent this average when $\mu$ is Lebesgue measure in Section \ref{BergmanSection} and induced Lebesgue measure on $bD$ in Section \ref{CauchySzegoSection}. For a weight $\sigma$, we write $\sigma(E)$ to represent $\int_E \sigma\,dV$ in Section \ref{BergmanSection} and to represent $\int_E \sigma\,dS$ in Section \ref{CauchySzegoSection}.

The paper is organized as follows. In Section \ref{BergmanSection}, we prove the weighted weak-type $(1,1)$ inequality for the Bergman projection, Theorem \ref{BergmanWeakType}. In Section \ref{CauchySzegoSection}, we prove the weighted weak-type $(1,1)$ estimate for the Cauchy-Szeg\H o projection, Theorem \ref{CauchySzegoWeakType}. Finally in Section \ref{BonusEstimates}, we obtain Corollaries \ref{KolmogorovBergman}, \ref{ZygmundBergman}, \ref{KolmogorovSzego}, and \ref{ZygmundSzego} via general principles.

The authors would like to thank Brett Wick for inspiring discussions and feedback.

\section{The Bergman Projection}\label{BergmanSection}
\subsection{Setup}
Let $D \subseteq \mathbb{C}^n$ be a strongly pseudoconvex bounded domain with $C^4$ boundary. This means that there exists a strictly plurisubharmonic, $C^4$ defining function $\rho$ with $D=\{z\in \mathbb{C}^n:\rho(z)<0\}$ and $\nabla{\rho}(z) \neq 0$ for $z \in bD$.

Our general approach is to construct an auxiliary operator $\mathcal{T}$ that produces and reproduces holomorphic functions. We follow the same construction as in \cites{LS2012,R1986}, first constructing an operator $\mathcal{T}_1$ that reproduces (but does not produce) holomorphic functions, and then introducing an operator $\mathcal{T}_2$ to correct it. The operator $\mathcal T$ is taken to be $\mathcal T_1 +\mathcal T_2$.

To construct $\mathcal{T}_1$, we use the holomorphic integral representations known as Cauchy-Fantappi\'{e} integrals. For $w \in D$, we define the Levi polynomial at $w$ as follows:
$$
    P_w(z):= \sum_{j=1}^{n}\frac{\partial \rho}{\partial w_j}(w)(z_j-w_j)+\frac{1}{2}\sum_{j,k=1}^n\frac{\partial^2 \rho}{\partial w_j \partial w_k}(w)(z_j-w_j)(z_k-w_k).
$$ 
We have to modify the Levi polynomial slightly to make it usable for our purposes. In particular, using the strict pseudoconvexity of $D$, it is possible to choose a smooth cutoff function $\chi=\chi(z,w)$ with $\chi \equiv 1$ when $|z-w|<\delta/2$ and $\chi \equiv 0$ when $|z-w| >\delta$ for a small constant $\delta>0$ such that the function
$$
    g(z,w):= -\chi P_w(z)+(1-\chi)|z-w|^2$$ satisfies 
$$
    \text{Re} \,g(z,w) \gtrsim -\rho(w)-\rho(z)+|z-w|^2.
$$

Define the $(1,0)$ form in $w$, $G(z,w)$, as follows:
$$
    G(z,w):= \chi\left(\sum_{j=1}^n \frac{\partial \rho}{\partial w_j}(w)\,dw_j+\frac{1}{2}\sum_{j,k=1}^n\frac{\partial^2 \rho}{\partial w_j \partial w_k}(w)(z_k-w_k)\,dw_j\right) + (1-\chi)\sum_{j=1}^n(\overline{w}_j - \overline{z}_j)\,dw_j.
$$
Notice that 
$$
    \langle G(z,w), w-z\rangle = g(z,w)+\rho(w),
$$
where $\langle \cdot , \cdot \rangle$ denotes the action of a $(1,0)$ form on a vector in $\mathbb{C}^n$.
Now we take 
$$
    \eta(z,w):= \frac{G(z,w)}{g(z,w)}
$$
and define
$$
    \mathcal{T}_1f(z):=\frac{1}{(2 \pi \mathrm{i})^n}\int_D (\overline{\partial}_w\eta)^nf(w),
$$
where the exponent $n$ denotes the wedge product taken $n$ times.

With this definition, one can show that $\mathcal{T}_1$ is majorized by a positive operator $\Gamma$ which can be interpreted as a Calder\'{o}n-Zygmund operator. This approach is taken in the proof of Proposition \ref{BergmanAuxiliaryWeakType} below. Notice also that the kernel of $\mathcal T_1$ is continuous on $\overline{D} \times \overline{D}$ away from the boundary diagonal $\{(z,z):z \in bD\}$. Moreover, the following is proven in \cite{LS2012}:

\begin{lemma}\label{reproduce holomorphic}
If $f \in L^1(D)$ is holomorphic on $D$, then for all $z \in D$,
$$\mathcal T_1f(z)=f(z).$$
\end{lemma}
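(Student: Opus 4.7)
My plan is to establish the reproducing identity via the classical Cauchy-Fantappi\'e framework combined with Stokes' theorem applied to a region that excludes a small ball around the singular point $w=z$. Two preliminary observations make $\eta=G/g$ well suited to this approach. First, since $\langle G(z,w), w-z\rangle=g(z,w)+\rho(w)$ and $\rho\equiv 0$ on $bD$, one has $\langle \eta(z,w), w-z\rangle=1$ for $w\in bD$, so $\eta$ is a Leray generating form on the boundary. Second, $\overline{\partial}_w^2=0$ and the Leibniz rule yield the algebraic identity $\overline{\partial}_w\bigl[\eta\wedge(\overline{\partial}_w\eta)^{n-1}\bigr]=(\overline{\partial}_w\eta)^n$.

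Fixing $z\in D$ and small $\epsilon>0$, I would consider the $(n,n-1)$-form $\alpha(w):=f(w)\,\eta(z,w)\wedge(\overline{\partial}_w\eta(z,w))^{n-1}$ on $\overline{D}\setminus B(z,\epsilon)$. Because $f$ is holomorphic, $\overline{\partial}_w f=0$, and because $\alpha$ has bidegree $(n,n-1)$ in $\mathbb{C}^n$ its $\partial_w$-derivative vanishes automatically. Combining these facts with the algebraic identity gives $d_w\alpha=f(\overline{\partial}_w\eta)^n$. Applying Stokes' theorem, accounting for the reversed orientation on $bB(z,\epsilon)$, yields
$$
\int_{D\setminus B(z,\epsilon)} f\,(\overline{\partial}_w\eta)^n = \int_{bD} f\,\eta\wedge(\overline{\partial}_w\eta)^{n-1} - \int_{bB(z,\epsilon)} f\,\eta\wedge(\overline{\partial}_w\eta)^{n-1}.
$$

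Next I would evaluate the two boundary pieces. Since $\eta$ is a Leray section on $bD$, the classical Cauchy-Fantappi\'e formula (derivable by a homotopy connecting $\eta$ to the Bochner-Martinelli generating form) gives $\int_{bD} f\,\eta\wedge(\overline{\partial}_w\eta)^{n-1}=(2\pi\mathrm{i})^n f(z)$. For the ball integral, the explicit expressions for $G$ and $g$ near the diagonal yield the pointwise bounds $|\eta|\lesssim|w-z|^{-1}$ and $|\overline{\partial}_w\eta|\lesssim|w-z|^{-1}$, so $|\eta\wedge(\overline{\partial}_w\eta)^{n-1}|\lesssim|w-z|^{-n}$; since the surface measure of $bB(z,\epsilon)$ has mass of order $\epsilon^{2n-1}$, this integral is $O(\epsilon^{n-1})\to 0$. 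Letting $\epsilon\to 0$ therefore gives $\int_D f\,(\overline{\partial}_w\eta)^n=(2\pi\mathrm{i})^n f(z)$, which is $\mathcal{T}_1 f(z)=f(z)$.

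The main technical obstacles I anticipate are twofold. First, justifying the limit passage on the left-hand side requires integrability of $(\overline{\partial}_w\eta)^n$ paired with $f\in L^1(D)\cap\text{Hol}(D)$, which in turn relies on the pseudoconvexity lower bound $\operatorname{Re} g(z,w)\gtrsim -\rho(z)-\rho(w)+|w-z|^2$ built into the construction of $g$. Second, the classical Cauchy-Fantappi\'e formula applies directly only to $f$ holomorphic on $D$ and continuous on $\overline{D}$, so for general holomorphic $f\in L^1(D)$ one would pass to the limit along a standard approximation such as $f_r(w):=f(\tau_r(w))$ with $\tau_r$ a contraction toward an interior reference point, invoking dominated convergence via the aforementioned kernel size bounds.
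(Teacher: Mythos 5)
Your overall strategy --- the algebraic identity $\overline{\partial}_w\bigl[\eta\wedge(\overline{\partial}_w\eta)^{n-1}\bigr]=(\overline{\partial}_w\eta)^n$, Stokes' theorem, the Cauchy--Fantappi\'e reproducing formula on $bD$, and an approximation to handle general $L^1$ data --- is exactly the route of \cite{LS2012}, which is where the paper sends the reader for this lemma. But two of your steps have genuine gaps. The first concerns the singularity structure. The Bergman-setting $g$ is normalized so that $\operatorname{Re}g(z,w)\gtrsim-\rho(z)-\rho(w)+|z-w|^2$ (equivalently $\langle G,w-z\rangle=g+\rho(w)$). Hence for a \emph{fixed interior} $z$ one has $|g(z,w)|\gtrsim-\rho(z)>0$ uniformly in $w\in\overline{D}$, so $\eta(z,\cdot)$ and $\eta\wedge(\overline{\partial}_w\eta)^{n-1}$ are bounded ($C^1$, given $\rho\in C^4$) on all of $\overline{D}$; the kernel of $\mathcal T_1$ is singular only on the \emph{boundary} diagonal. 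Excising $B(z,\epsilon)$ is therefore unnecessary --- and, more to the point, the bounds you invoke on the small sphere are not what the construction gives. Near the diagonal $\chi\equiv1$, so $|G|\approx|\partial\rho(w)|+O(|z-w|)$ is of size $1$, not $|z-w|$; using only $|g|\gtrsim|z-w|^2$ one gets $|\eta|\lesssim|z-w|^{-2}$ (and worse for $\overline{\partial}_w\eta$), so your claimed $O(\epsilon^{n-1})$ for the sphere integral does not follow and the naive count actually diverges. The step is rescued only by the observation you did not make, namely that the $-\rho(z)$ term removes the interior singularity entirely; once that is noted, Stokes applies on all of $D$ and $\int_D f\,(\overline{\partial}_w\eta)^n=\int_{bD}f\,j^*\bigl(\eta\wedge(\overline{\partial}_w\eta)^{n-1}\bigr)=(2\pi\mathrm i)^n f(z)$ for $f$ holomorphic and smooth up to $\overline{D}$ (the boundary integral being exactly the paper's $\mathbf C_1$, cf.\ Lemma \ref{reproduce 2}).

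The second gap is the reduction to such $f$, which is the real content of the statement for $L^1$ data. The dilation $f_r=f\circ\tau_r$ toward an interior reference point does not make sense on a general strongly pseudoconvex domain, which need not be starlike. The standard remedies are either to work on the exhaustion $D_\epsilon=\{\rho<-\epsilon\}$, on which $f$ is holomorphic up to the boundary, and let $\epsilon\to0$, or to invoke density in $A^1(D)$ of functions holomorphic on a neighborhood of $\overline{D}$. In either case the limit on the left-hand side uses the boundedness of the kernel in $w$ for fixed $z\in D$, and the limit on the right-hand side uses that $L^1$ convergence of holomorphic functions forces pointwise convergence at $z$ via the sub-mean-value property; neither mechanism is supplied by ``dominated convergence via the kernel size bounds'' alone, and the exhaustion route additionally requires controlling how the kernels built from $\rho+\epsilon$ converge to those built from $\rho$. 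These points are standard but they are precisely where the work of \cite{LS2012} lies.
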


Lemma \ref{reproduce holomorphic} says that $\mathcal T_1$ reproduces holomorphic functions. However, in general $\mathcal T_1$ does not produce holomorphic functions from $L^1$ data since its kernel is not holomorphic in the variable $z$. %As in \cites{LS2012,R1986}, 
We introduce a correction operator, $\mathcal T_2$, to overcome this difficulty. The operator $\mathcal T_2$ is constructed by solving a $\bar \partial$ problem on a smoothly bounded, strongly pseudoconvex domain $\Omega$ that contains $D$. These details are unimportant for our purposes; we only need the following from \cite{LS2012}:
\begin{lemma}\label{correction operator}
There exists an integral operator 
$$\mathcal T_2 f(z):= \int_{D} K_2(z,w) f(w)\, dV(w)$$
with continuous kernel $K_2(z,w)$ on $\overline{D} \times \overline{D}$ so that the operator $\mathcal T:= \mathcal T_1+ \mathcal T_2$ satisfies the following properties:
\begin{enumerate}
    \item If $f \in L^1(D)$, then $\mathcal Tf$ is holomorphic on $D$.
    \item If $f \in L^1(D)$ and $f$ is holomorphic on $D$, then $\mathcal Tf(z)=f(z)$ for $z \in D.$
\end{enumerate}
\end{lemma}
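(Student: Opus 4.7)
The plan is to construct $\mathcal{T}_2$ by solving a $\bar\partial$-problem that exactly cancels the non-holomorphicity of $\mathcal{T}_1$ in the $z$-variable. The crucial observation is that, on the region where $\chi\equiv 1$ (namely $|z-w|<\delta/2$), both $g(z,w)=-P_w(z)$ and the coefficients of $G(z,w)$ are polynomial, hence holomorphic, in $z$; therefore $\eta(z,w)$ is holomorphic in $z$ in a neighborhood of the diagonal and $\bar\partial_z\eta$ is a smooth $(0,1)_z$-form supported in the annular region $\{\delta/2\le |z-w|\le\delta\}$. Differentiating the defining formula for $\mathcal{T}_1 f$ under the integral sign then yields
\begin{equation*}
\bar\partial_z \mathcal{T}_1 f(z) = \int_D N(z,w)\, f(w)\, dV(w),
\end{equation*}
where $N(z,w)$ is a $(0,1)_z$-form-valued kernel that is smooth on $\overline{D}\times\overline{D}$, since the diagonal singularity of $(\bar\partial_w\eta)^n$ is annihilated after applying $\bar\partial_z$.

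Next, fix a smoothly bounded strongly pseudoconvex domain $\Omega$ with $\overline{D}\subset\Omega$, and let $\mathcal{R}$ be any standard linear $\bar\partial$-solution operator on $\Omega$ (for instance a Henkin-Ramirez integral solution or Kohn's canonical solution), sending continuous $\bar\partial$-closed $(0,1)$-forms on $\overline{\Omega}$ to continuous functions on $\overline{\Omega}$. After extending $N(z,w)$ smoothly to $\overline{\Omega}\times\overline{D}$ (which is possible because $\bar\partial_z\eta$ has been arranged to be compactly supported in $z$ near $D$), set $\alpha_f(z):=-\int_D N(z,w)f(w)\,dV(w)$ for $z\in\Omega$; then $\alpha_f$ is a $\bar\partial$-closed $(0,1)$-form on $\Omega$ that agrees with $-\bar\partial_z\mathcal{T}_1 f$ on $D$. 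Define
\begin{equation*}
\mathcal{T}_2 f(z):=(\mathcal{R}\alpha_f)(z),\qquad z\in D.
\end{equation*}
A Fubini argument produces a kernel representation $\mathcal{T}_2 f(z)=\int_D K_2(z,w)f(w)\,dV(w)$ with $K_2(z,w)=-\mathcal{R}_z[N(\cdot,w)](z)$; since $N(\cdot,w)$ depends continuously on $w\in\overline{D}$ (in any reasonable norm) and $\mathcal{R}$ preserves continuity, $K_2$ is continuous on $\overline{D}\times\overline{D}$. By construction, $\bar\partial_z\mathcal{T}f=\bar\partial_z\mathcal{T}_1 f+\bar\partial_z\mathcal{R}\alpha_f=-\alpha_f+\alpha_f=0$ on $D$, so $\mathcal{T}f$ is holomorphic, proving property (1). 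If $f$ is holomorphic, Lemma \ref{reproduce holomorphic} gives $\mathcal{T}_1 f=f$, hence $\alpha_f\equiv 0$ and $\mathcal{T}_2 f=\mathcal{R}(0)=0$ by linearity, so $\mathcal{T}f=f$, proving property (2).

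The principal obstacle lies in establishing the continuity of $K_2$ up to the boundary diagonal of $\overline{D}\times\overline{D}$: although the Henkin-type solution kernel for $\bar\partial$ on $\Omega$ is itself singular on the boundary diagonal of $\overline{\Omega}\times\overline{\Omega}$, the smoothness of $N(z,w)$ across $\{z=w\}$, inherited from the vanishing of $\bar\partial_z\eta$ there, is precisely what is needed to ensure that the composition $\mathcal{R}_z[N(\cdot,w)]$ extends continuously to the diagonal. The $C^4$-regularity hypothesis on $bD$ is used here to guarantee that the second-order derivatives of $\rho$ appearing in $G$ and $g$ have enough regularity for $N$ to be sufficiently smooth for this argument to go through.
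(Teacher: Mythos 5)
The paper does not actually prove this lemma: it imports it from \cite{LS2012}, describing the construction only as ``solving a $\bar\partial$ problem on a smoothly bounded, strongly pseudoconvex domain $\Omega$ that contains $D$.'' Your outline follows exactly that cited construction, and the opening observation (that $\eta$ is holomorphic in $z$ where $\chi\equiv 1$, so $\bar\partial_z$ kills the diagonal singularity of the kernel of $\mathcal T_1$) is the right starting point. However, two steps in your execution have genuine gaps. The first concerns the extension to $\Omega$: you extend $N(z,w)$ ``smoothly'' to $\overline\Omega\times\overline D$ and then assert that $\alpha_f$ is $\bar\partial$-closed on $\Omega$. Closedness is automatic on $D$, where $\alpha_f=-\bar\partial_z\mathcal T_1 f$, but an arbitrary smooth extension destroys it on $\Omega\setminus\overline D$, and a non-closed datum cannot be handed to $\mathcal R$. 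What is needed is the \emph{natural} extension: the kernel $(\bar\partial_w\eta)^n(z,w)$ itself makes sense for $z$ in a neighborhood of $\overline D$ when $w\in D$ (since $\operatorname{Re}g\gtrsim\delta^2$ on the compact set where $z,w\in\overline D$ and $|z-w|\ge\delta/2$, and this persists for $z$ slightly outside $\overline D$; near the diagonal one extends by $0$), and $\Omega$ must be chosen inside that neighborhood so that $N(\cdot,w)$ is locally $\bar\partial_z$-exact, hence closed. Your parenthetical justification --- that $\bar\partial_z\eta$ is ``compactly supported in $z$ near $D$'' --- is not correct ($\bar\partial_z\eta$ vanishes near the diagonal, i.e.\ it is supported on $\{|z-w|\ge\delta/2\}$, not only on the annulus, since the Bochner--Martinelli piece is also non-holomorphic in $z$), and compact support would not yield closedness in any case.

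The second and more serious gap is in property (2). You conclude $\mathcal T_2f=\mathcal R(0)=0$ from ``$\alpha_f\equiv 0$,'' but your argument only shows $\alpha_f=0$ \emph{on $D$}, where it equals $-\bar\partial f$. The solution operator $\mathcal R$ is nonlocal --- it integrates over all of $\Omega$ --- so $(\mathcal R\alpha_f)|_D$ depends on the values of $\alpha_f$ on $\Omega\setminus\overline D$, which you have not shown to vanish; a priori $\int_D N(z,w)f(w)\,dV(w)\ne 0$ for exterior $z$ even when $f$ is holomorphic. Closing this requires an exterior version of the Cauchy--Fantappi\`e identity: one must show that $z\mapsto\int_D C(z,w)f(w)\,dV(w)$ is holomorphic (in fact, after an application of Stokes' theorem, that the resulting boundary CF integral vanishes) for $z\in\Omega\setminus\overline D$, which in turn forces a careful choice of $\Omega$ and a verification that $g(z,\cdot)$ is nondegenerate for such exterior $z$; only then does $\alpha_f\equiv 0$ on all of $\Omega$ follow by continuity. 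As written, the reproduction property does not follow from what you have proved. (Two harmless overstatements: $N$ is only as regular as two $w$-derivatives and one $z$-derivative of data built from $D^2\rho$ allow --- continuity, not smoothness, is what you actually have and all you need --- and the continuity of $K_2$ on $\overline D\times\overline D$ should be phrased as uniform continuity of $w\mapsto N(\cdot,w)$ in sup-norm composed with the $L^\infty\to C(\overline\Omega)$ bound for $\mathcal R$, which is fine once the extension issue above is repaired.)
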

\noindent It is important to note that Lemma \ref{correction operator} together with the definition of $\mathcal{T}_1$ implies that the kernel of $\mathcal{T}$ is continuous on $\overline{D} \times \overline{D}$ away from the boundary diagonal.

We have constructed an auxiliary operator $\mathcal{T}$ that produces and reproduces holomorphic functions. Since $\mathcal{B}$ also produces and reproduces holomorphic functions, we arrive at the following operator equations that hold on $L^2(D):$
$$
    \mathcal T \mathcal B=\mathcal B \quad\text{and}\quad \mathcal B \mathcal T=\mathcal T.
$$ 
Taking adjoints in the first identity, subtracting from the second, and some rearrangement yields the familiar Kerzman-Stein equation:
\begin{align}\label{KerzmanStein}
    \mathcal B (I-(\mathcal T^*-\mathcal T))=\mathcal T.
\end{align}

The proof of Theorem \ref{BergmanWeakType} follows easily from the following two facts.
\begin{prop}\label{BergmanInversion}
If $\sigma$ is a $B_1$ weight, then the operator $I-(\mathcal T^*-\mathcal T)$ is invertible on $L_{\sigma}^1(D)$.
\end{prop}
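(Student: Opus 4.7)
My plan is to establish invertibility via the Fredholm alternative: I will show that $\mathcal{T}^* - \mathcal{T}$ is a compact operator on $L^1_\sigma(D)$ and that $I - (\mathcal{T}^* - \mathcal{T})$ is injective there, so that invertibility follows by standard Banach-space theory.

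The first step is to decompose $\mathcal{T}^* - \mathcal{T} = (\mathcal{T}_1^* - \mathcal{T}_1) + (\mathcal{T}_2^* - \mathcal{T}_2)$. The second summand has continuous kernel on $\overline{D} \times \overline{D}$ by Lemma \ref{correction operator}, so it is immediately bounded and compact on $L^1_\sigma(D)$. For the principal term $\mathcal{T}_1^* - \mathcal{T}_1$, I will invoke the Kerzman-Stein cancellation between $K_1(z,w)$ and $\overline{K_1(w,z)}$: the leading-order singularities produced by the Cauchy-Fantappi construction coincide, so the resulting kernel $k(z,w)$ has singularity strictly weaker — in the quasi-metric on $D$ — than the order-$2n$ singularity of the Bergman kernel, analogous to the cancellation exploited in \cite{LS2012,WW2020}. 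With this kernel estimate in hand, I will verify the weighted Schur-type condition
\[
\sup_{w \in D} \frac{1}{\sigma(w)} \int_D |k(z,w)| \, \sigma(z) \, dV(z) < \infty
\]
by using the $B_1$ averaging property to control the ratios $\sigma(z)/\sigma(w)$ against the improved singularity of $k$. This yields boundedness of $\mathcal{T}_1^* - \mathcal{T}_1$ on $L^1_\sigma(D)$. Compactness then follows by approximating $k$ by its truncations bounded away from the boundary diagonal, which are continuous on $\overline{D} \times \overline{D}$ and thus compact, together with operator-norm convergence as the truncation parameter is sent to zero.

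For injectivity, suppose $f \in L^1_\sigma(D)$ satisfies $f = (\mathcal{T}^* - \mathcal{T})f$. Since $D$ is bounded and $\sigma \in B_1$, one has $\sigma^{-1} \in L^\infty(D)$, giving $L^1_\sigma(D) \subset L^1(D)$. The weaker singularity of $k$ makes $\mathcal{T}^* - \mathcal{T}$ behave like a fractional integral, so the identity $f = (\mathcal{T}^* - \mathcal{T})^N f$ gains integrability at each iteration; for $N$ large enough, $f$ lands in $L^2(D)$. On $L^2(D)$, the skew-adjointness $(\mathcal{T}^* - \mathcal{T})^* = -(\mathcal{T}^* - \mathcal{T})$ implies, via the spectral theorem, that $I - (\mathcal{T}^* - \mathcal{T})$ is invertible and hence injective, forcing $f \equiv 0$.

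The main obstacle is establishing the precise kernel estimate for $\mathcal{T}_1^* - \mathcal{T}_1$ in the $C^4$ near-minimal smoothness setting and then verifying that it interacts correctly with the $B_1$ averaging condition on the quasi-metric balls described in Section \ref{BergmanSection}; that is where the bulk of the technical work lies. A secondary concern is quantifying the fractional smoothing of $\mathcal{T}^* - \mathcal{T}$ precisely enough to guarantee that the bootstrap into $L^2(D)$ closes in finitely many iterations, uniformly in $f$.
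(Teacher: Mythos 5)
Your proposal is correct and follows essentially the same architecture as the paper: compactness of $\mathcal T^*-\mathcal T$ on $L^1_\sigma(D)$, plus the fact that $1$ is not an eigenvalue (obtained, exactly as in Lemma \ref{Bergmanspectrum}, by bootstrapping an eigenfunction into $L^2(D)$ via the $L^p\to L^{p+\varepsilon}$ smoothing of $\mathcal T^*-\mathcal T$ and then invoking skew-adjointness), and then the spectral/Fredholm theory for compact operators on a Banach space. The one point where you genuinely diverge is the mechanism for compactness: you propose truncating the kernel away from the boundary diagonal and proving operator-norm convergence of the truncations, whereas the paper verifies Eveson's criterion (Lemma \ref{compact operator}) together with a Riesz--Kolmogorov-type relative-compactness criterion for quasi-metric measure spaces (Lemma \ref{relatively compact}) applied to the family $\{k_w\}$. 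Your route is viable, but note that it is not lighter: the operator-norm convergence of the truncations requires exactly the quantitative near-diagonal Schur bound $\sup_w\sigma(w)^{-1}\int_{B(w,\delta)}|k(z,w)|\sigma(z)\,dV(z)\lesssim\delta^{1/2}+d(w,bD)^{1/2}$, i.e.\ the paper's estimate \eqref{1}, which in turn rests on the two kernel bounds $|k(z,w)|\lesssim d(z,w)^{-(n+1/2)}$ and $|k(z,w)|\lesssim\min\{d(z,bD),d(w,bD)\}^{-(n+1/2)}$ from \cite[Lemma 3.14]{WW2020}; you would also need to check that $w\mapsto k(\cdot,w)\sigma(w)^{-1}$ stays within a relatively compact family after truncation, which is where the $B_1$ lower bound on $\sigma$ enters. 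One small correction: in the quasi-metric used here one has $V(B(z,r))\approx r^{n+1}$, so the critical singularity for integrability is order $n+1$ (not $2n$, which is the relevant exponent only on the boundary, where $S(B(z,r))\approx r^{2n}$); the Kerzman--Stein cancellation improves the exponent from $n+1$ to $n+\tfrac12$, and it is this half-power gain that drives both the Schur test and the $L^p\to L^{p+\varepsilon}$ bootstrap.
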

\begin{prop}\label{BergmanAuxiliaryWeakType}
If $\sigma$ is a $B_1$ weight, then $\mathcal{T}$ maps $L_{\sigma}^1(D)$ to $L_{\sigma}^{1,\infty}(D)$ boundedly.
\end{prop}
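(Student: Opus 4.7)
The plan is to decompose $\mathcal T = \mathcal T_1 + \mathcal T_2$ and handle the two pieces separately. The correction operator $\mathcal T_2$ has a kernel $K_2$ that is continuous (hence bounded) on $\overline D\times\overline D$ by Lemma \ref{correction operator}. Applying the $B_1$ condition to a quasi-ball $B\supseteq\overline D$ (take $z_0\in bD$ and $r_0$ exceeding the quasi-metric diameter of $\overline D$, so that $r_0 > d(z_0,bD)=0$) yields both $\sigma(D)<\infty$ and $\|\sigma^{-1}\|_{L^\infty(D)}<\infty$. Hence $\|f\|_{L^1(D)}\lesssim \|f\|_{L^1_\sigma(D)}$ and $\|\mathcal T_2 f\|_{L^\infty(D)}\lesssim \|f\|_{L^1_\sigma(D)}$, which combined with $\sigma(D)<\infty$ immediately gives the weighted weak-type $(1,1)$ bound for $\mathcal T_2$. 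The substantive task is thus the bound for $\mathcal T_1$.

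For $\mathcal T_1$, use the pointwise majorization $|\mathcal T_1 f(z)|\lesssim \Gamma(|f|)(z)$ indicated in the setup, where $\Gamma$ is a positive operator whose kernel satisfies the standard Calder\'on-Zygmund size and smoothness conditions with respect to the quasi-metric $d$ induced by $g$, which makes $(\overline D, d, V)$ into a space of homogeneous type and underlies the $B_1$ condition of \cite{WW2020}. It thus suffices to prove that a positive Calder\'on-Zygmund operator on $(\overline D, d, V)$ is of weighted weak-type $(1,1)$ against $B_1$ weights. Following the classical template, given $f\in L^1_\sigma(D)$ and $\lambda>0$, perform a Calder\'on-Zygmund decomposition of $f$ at level $\lambda$ with respect to Lebesgue measure, producing $f=g+b$ with $|g|\lesssim\lambda$ and $b=\sum_i b_i$ supported on disjoint quasi-balls $Q_i$ satisfying $\int b_i=0$ and $\|b_i\|_{L^1}\lesssim\lambda V(Q_i)$. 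The good part is handled via Chebyshev, the $L^2_\sigma(D)$ boundedness of $\Gamma$ (from $B_1\subset B_2$ and \cite{WW2020}), and the $B_1$ comparison $\|g\|_{L^1_\sigma}\lesssim\|f\|_{L^1_\sigma}$. For the bad part, $\sigma(\cup_i 2Q_i)$ is controlled via doubling and the key $B_1$-based inequality $\lambda\sigma(Q_i)\lesssim \int_{Q_i}|f|\sigma\,dV$, obtained by combining the Calder\'on-Zygmund stopping bound $\lambda V(Q_i)\leq \int_{Q_i}|f|\,dV$ with $\sigma(Q_i)\lesssim V(Q_i)\,\mathrm{ess\,inf}_{Q_i}\sigma$. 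Outside $\cup_i 2Q_i$, cancellation $\int b_i=0$ together with the H\"ormander smoothness of $\Gamma$'s kernel and the dual H\"ormander estimate coming from $B_1$ supplies the complementary bound.

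The main technical obstacle is to ensure that the quasi-balls $Q_i$ produced by the Calder\'on-Zygmund decomposition are admissible for the $B_1$ condition, i.e., $Q_i=B(z_i,r_i)$ with $r_i>d(z_i,bD)$: an interior stopping-time cube deep inside $D$ is not automatically admissible. One must either run a decomposition that only produces boundary-touching (Carleson-tent style) cubes, or enlarge interior cubes to nearby admissible ones at bounded cost, leveraging the fact that $\Gamma$'s kernel is smooth away from the boundary diagonal. Carrying this out precisely, along with verifying the weighted dual H\"ormander estimate for $\Gamma$ against $B_1$ weights, constitutes the technical core of the argument.
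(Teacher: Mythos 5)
Your treatment of $\mathcal T_2$ and the reduction of $\mathcal T_1$ to the positive operator $\Gamma$ match the paper. But the core of your argument --- a classical Calder\'on--Zygmund decomposition with mean-zero bad parts, handled by cancellation plus a weighted dual H\"ormander estimate --- has a genuine gap, and it is exactly the one you flag and then defer: the $B_1$ condition gives \emph{no} information on non-admissible quasi-balls. This is not a removable technicality. For an interior stopping ball $Q$ with $r(Q)\ll d(Q,bD)$, the key inequality $\lambda\sigma(Q)\lesssim\int_Q|f|\sigma\,dV$ needs $\langle\sigma\rangle_Q\lesssim\operatorname*{ess\,inf}_Q\sigma$, which $B_1$ does not supply; the same failure infects your good-part estimate $\|g\|_{L^1_\sigma}\lesssim\|f\|_{L^1_\sigma}$ (the term $|\langle f\rangle_{Q}|\sigma(Q)$ requires the same comparison) and your dual H\"ormander integral, where the dyadic shells $2^jQ$ with $2^jr(Q)<d(c(Q),bD)$ contribute averages $\langle\sigma\rangle_{2^jQ}$ that the adapted maximal function never sees. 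Neither of your proposed fixes closes this: enlarging an interior ball to an admissible one inflates its volume uncontrollably and destroys the stopping bound $\lambda<\langle|f|\rangle_Q$, while keeping only boundary-touching balls dumps the interior part of $f$ (where $|f|$ may be huge) into the good part, breaking the pointwise bound $|g|\lesssim\lambda$ that your Chebyshev-plus-$L^2$ step relies on.

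The paper resolves both horns of this dilemma with two devices you would need. First, its modified decomposition (Lemma \ref{CZD}) does keep only the boundary-adjacent Whitney balls ($r(B)\ge\tfrac12 d(B,bD)$) in the bad part, but compensates by proving not $|f_1|\lesssim\lambda$ pointwise but $R_kf_1\lesssim\lambda$, where $R_k$ averages over balls of radius comparable to the distance to the boundary; the smoothing inequality $\Gamma u\lesssim\Gamma(R_{k'}u)$ (from Lemma \ref{gisconstant}) and the duality Lemma \ref{RkRk'} then let the $L^2_\sigma$ argument for the good part go through with averages in place of pointwise bounds. Second, the bad part is handled with \emph{no cancellation at all}: since $|g(z,\cdot)|$ is essentially constant on $B$ for $z\notin RB$, one replaces $f_{2,B}$ by $\langle f_{2,B}\rangle_B\chi_B$, and the resulting function $\tilde f_2$ is bounded by $\lambda$ on admissible balls, so a second application of the $L^2_\sigma$ bound finishes. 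This is Bekoll\'e's scheme, and it sidesteps the dual H\"ormander estimate whose $B_1$ version is precisely what your outline cannot establish. As written, your proposal identifies the obstruction but does not overcome it.
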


\begin{proof}[Proof of Theorem \ref{BergmanWeakType}]
Using Proposition \ref{BergmanInversion}, we may rewrite \eqref{KerzmanStein} as
$$
    \mathcal B= \mathcal{T} (I-(\mathcal T^*-\mathcal T))^{-1}.
$$
The bound of $\mathcal{B}$ from $L_{\sigma}^1(D)$ to $L_{\sigma}^{1,\infty}(D)$ follows from Proposition \ref{BergmanInversion} and Proposition \ref{BergmanAuxiliaryWeakType}.
\end{proof}

The remainder of this section is devoted to proving Proposition \ref{BergmanInversion} and Proposition \ref{BergmanAuxiliaryWeakType}. Proposition \ref{BergmanInversion} will follow from the spectral theorem for compact operators on a Banach space. In particular, we will show that $\mathcal T^*-\mathcal T$ is compact on $L_{\sigma}^1(D)$ and also that $1$ is not an eigenvalue of $\mathcal T^*-\mathcal T$ on $L_{\sigma}^1(D)$. Proposition \ref{BergmanAuxiliaryWeakType} relies on methods from Calder\'on-Zygmund theory reminiscent of the ideas in \cite{B198182}.

%The first item will be a consequence of the cancellation of the operator $\mathcal T^*-\mathcal T$, which is less singular than $\mathcal T$. In particular, we will show that   Then we can appeal to the spectral theorem for compact operators on a Banach space to conclude that $I-(\mathcal T^*-\mathcal T)$ is invertible on $L^1(D)$. The inversion then follows from the spectral theorem for compact operators on a Banach space.

%For the second part, we show that the operator $\mathcal T$ is weak type $(1,1)$ by an application of Calder\'on Zygmund theory. In particular, we show that the kernel of $\mathcal{T}$ satisfies an appropriate smoothness estimate with respect to a particular quasi-metric.

The arguments in \cites{HWW2020,M2003,M1994,WW2020} make use of an appropriately constructed quasi-metric $d$ that reflects the geometry of the boundary. Technically, the quasi-metric $D$ is only defined for points $z,w$ sufficiently close to the boundary, but we will abuse notation and define objects  as if $d$ were defined globally. This reduction is possible because the kernels of all the relevant operators are uniformly continuous on compact subsets of $\overline{D} \times \overline{D}$ off the boundary diagonal and all the necessary properties will hold for trivial reasons. 

The quasi-metric $d$ locally satisfies:
$$
    d(z,w) \approx |z_1-w_1|+\sum_{j=2}^{n}|z_j-w_j|^2,
$$ 
where the coordinates $z_j$ and $w_j$ are taken in a special holomorphic coordinate system centered at $w$. The coordinate function $z_1$ corresponds to the radial direction, while $z_2,\dots,z_n$ describe the complex tangential directions. In \cite{M2003}, these coordinates were used to obtain favorable estimates on the Bergman kernel for smoothly bounded domains $D$, and in \cite{WW2020} they were used in the case when $D$ has less boundary regularity. 

We use the constant $c>0$ to denote the implicit constant in the triangle inequality for $d$: 
$$
    d(z,w)\leq cd(z,\zeta)+cd(\zeta,w).
$$
We denote balls with respect to this quasi-metric by $B(z,r):=\{w\in D: d(z,w)<r\}$. If $B$ is a quasi-ball, then its center and radius are represented by $c(B)$ and $r(B)$ respectively, meaning $B=\{w \in D: d(c(B),w)<r(B)\}$. We also write $kB$ to denote the $k$-fold dilate of $B$, that is $kB:=\{w\in D: d(c(B),w)<kr(B)\}$. 

Importantly, the triple $(D,d,V)$ forms a space of homogeneous type in the sense of Coifman and Weiss introduced in \cite{CW1971}. In particular, $V$ satisfies the following growth condition with respect to quasi-balls induced by $d$: 
\begin{align}\label{Vgrowth}
    V(B(z,r)) \approx r^{n+1}
\end{align}
for all $z \in D$ and $r>0$. Moreover, the distance function $d$ can be extended to $\overline{D} \times \overline{D}$ and we may also define $d(z,bD):= \inf_{w \in bD}d(z,w)$, see \cite{HWW2020}. Notice that for a $B_1$ weight $\sigma$, $\sigma\,dV$ also satisfies a particular doubling property for quasi-balls close to the boundary: 
$$
    \sigma(B(z,2r))\lesssim \left(\inf_{w\in B(z,2r)}\sigma(w)\right)V(B(z,2r)) \lesssim  \sigma(B(z,r))
$$
for any $z \in D$ and $r>0$ such that $r>kd(z,bD)$ for some absolute $k>0$ (the first inequality above depends on $[\sigma]_{B_1}$). For sets $E,F\subseteq \overline{D}$, we write $d(E,F):=\inf_{\substack{z \in E \\ w \in F}} d(z,w)$.

We work with a maximal operator $\mathcal{M}$ adapted to our setting. For locally integrable $f$, define
$$
    \mathcal{M}f(z):=\sup_{\substack{B(w,r)\ni z \\ r>d(w,bD)}} \langle |f|\rangle_{B(w,r)}.
$$
Note that a weight $\sigma$ is in $B_1$ if and only if $\mathcal{M}\sigma(z)\lesssim \sigma(z)$ for almost every $z\in D$. 

\subsection{Inversion of the ``mild" operator}
%We begin by recalling a theorem on the compactness of integral operators that is proven in \cite[Corollary 5.1]{E1995}:

%\begin{thm}\label{L1Compactness}
%Let $\mathcal{K}$ be an integral operator on a measurable set $\Omega \subseteq \mathbb R^n$ with kernel $k: \Omega \times \Omega \rightarrow \mathbb C$ given by
%$$
%    \mathcal{K}(f)(z)= \int_{\Omega} k(x,y) f(y) \mathop{dy}.
%$$ 
%Suppose that for each $y \in \Omega$, $k(\cdot, y) \in L^1(\Omega)$ and that there exists $M>0$ such that $\sup_{y \in \Omega}\int_{\Omega}|k(x,y)|\mathop{dx}<M$. Let $\tilde{k}:\mathbb R^n \times \mathbb R^n \rightarrow \mathbb C$ be defined by
%$$
%    \tilde{k}(x,y):=\begin{cases} k(x,y) & x \in \Omega \\
%0 & x \in \mathbb R^n \setminus{\Omega} \end{cases}.
%$$
%If given any $\varepsilon>0$, there exist constants $R, \delta>0$ such that for all $y \in \Omega$ and $h \in \mathbb{R}^n$ with $|h|<\delta$ we have
%$$
%    \int_{\mathbb{R}^n \setminus B(0,R)}|\tilde{k}(x,y)| \mathop{dx}< \varepsilon \quad\text{and} \quad \int_{\Omega}|\tilde{k}(x+h,y)-\tilde{k}(x,y)|\mathop{dx}<\varepsilon,
%$$
%then the operator $\mathcal{K}$ is compact on $L^1(\Omega)$.
%\end{thm}

To deduce the compactness of $\mathcal{T}^* -\mathcal{T}$, we use a more general result which follows from \cite[Corollary 4.1]{E1995}. In the following lemma, $\mathcal{K}$ is an integral operator given by
$$
    \mathcal{K}f(x)= \int_{X} k(x,y) f(y)\mathop{d \mu(y)}
$$ 
and $k_y(x)=k(x,y).$ 
\begin{lemma} \label{compact operator}
Let $(X,\mu)$ be a positive measure space. Suppose that $k:X\times X \rightarrow \mathbb{R}$ is a measurable function such that %there exists $M>0$ where for almost every $x \in X$, $\int_X |k(x,y)|\,d\mu(y) <M$. 
$\|\int_X k(x,\cdot)\,d\mu(x)\|_{L^{\infty}(X,\mu)}<\infty$. If the set $\{k_y\}_{y\in X}$ is relatively compact in $L^1(X,\mu)$, then $\mathcal K$ and $\mathcal K^*$ are compact operators on $L^1(X,\mu)$ and $L^\infty(X,\mu)$ respectively. 
\end{lemma}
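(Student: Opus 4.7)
The plan is to approximate $\mathcal{K}$ in the operator norm on $L^1(X,\mu)$ by finite-rank operators and conclude compactness; the compactness of $\mathcal{K}^*$ on $L^\infty(X,\mu)$ will then follow automatically from Schauder's theorem. Before approximating, I would verify the $L^1$ boundedness of $\mathcal{K}$ using Fubini--Tonelli:
$$
\|\mathcal{K}f\|_{L^1(X,\mu)}\leq \int_X |f(y)|\int_X|k(x,y)|\,d\mu(x)\,d\mu(y)\leq \left\|\int_X k(x,\cdot)\,d\mu(x)\right\|_{L^\infty(X,\mu)}\|f\|_{L^1(X,\mu)},
$$
which is finite by hypothesis.

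Fix $\varepsilon>0$. The relative compactness of $\{k_y\}_{y\in X}$ in $L^1(X,\mu)$ supplies a finite $\varepsilon$-net $\{k_{y_1},\dots,k_{y_N}\}$. Next I would build a measurable partition $X=X_1\sqcup\cdots\sqcup X_N$ such that $\|k_y-k_{y_i}\|_{L^1(X,\mu)}<\varepsilon$ whenever $y\in X_i$. The measurability is not a serious obstacle: by Fubini, each function $y\mapsto \|k_y-k_{y_i}\|_{L^1(X,\mu)}$ is measurable, so the sets $A_i:=\{y\in X:\|k_y-k_{y_i}\|_{L^1(X,\mu)}<\varepsilon\}$ are measurable and cover $X$, and one can disjointify them in the standard way by setting $X_i := A_i \setminus \bigcup_{j<i} A_j$. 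I would then define the finite-rank operator
$$
\mathcal{K}_\varepsilon f(x):=\sum_{i=1}^{N} k_{y_i}(x)\int_{X_i}f(y)\,d\mu(y),
$$
which is manifestly compact as a finite sum of rank-one operators on $L^1(X,\mu)$.

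The key estimate is then obtained by interchanging the order of integration:
$$
\|\mathcal{K}f-\mathcal{K}_\varepsilon f\|_{L^1(X,\mu)}\leq \sum_{i=1}^{N}\int_{X_i}\|k_y-k_{y_i}\|_{L^1(X,\mu)}|f(y)|\,d\mu(y)\leq \varepsilon\|f\|_{L^1(X,\mu)}.
$$
Letting $\varepsilon\to 0$ shows that $\mathcal{K}$ is a uniform operator-norm limit of compact operators on $L^1(X,\mu)$, hence compact. Compactness of $\mathcal{K}^*$ on $L^\infty(X,\mu)$ then follows from Schauder's theorem, since $L^\infty(X,\mu)$ is the Banach-space dual of $L^1(X,\mu)$. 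The most delicate point in the argument is the measurable disjointification of the $\varepsilon$-net preimages, which is routine once one invokes Fubini; the remainder is standard functional-analytic machinery and matches the general principle of \cite{E1995}.
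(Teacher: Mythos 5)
Your argument is correct, and it is worth pointing out that the paper offers no proof of this lemma at all: it is simply quoted as a consequence of \cite[Corollary 4.1]{E1995}. Your finite-rank approximation is exactly the standard mechanism underlying that citation --- take an $\varepsilon$-net $\{k_{y_1},\dots,k_{y_N}\}$ with centers in the relatively compact family, disjointify the measurable preimages $A_i$, and check $\|\mathcal K-\mathcal K_\varepsilon\|_{L^1\to L^1}\le\varepsilon$ by Tonelli, after which Schauder's theorem disposes of $\mathcal K^*$ on $L^\infty(X,\mu)=(L^1(X,\mu))^*$ --- so you have in effect made the lemma self-contained rather than deferred. Two small remarks. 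First, your opening display is not literally valid as written: $\int_X|k(x,y)|\,d\mu(x)$ is not dominated by $\|\int_X k(x,\cdot)\,d\mu(x)\|_{L^\infty(X,\mu)}$, since the absolute value sits inside the integral on the left and outside on the right (the hypothesis as stated in the lemma is almost certainly meant with the absolute value inside, as in Eveson). This is harmless, because relative compactness of $\{k_y\}_{y\in X}$ in $L^1(X,\mu)$ already forces $\sup_{y\in X}\|k_y\|_{L^1(X,\mu)}<\infty$, which is all the boundedness your argument actually uses. Second, the Fubini--Tonelli measurability of $y\mapsto\|k_y-k_{y_i}\|_{L^1(X,\mu)}$ tacitly assumes $\sigma$-finiteness, which is satisfied in the paper's applications (the measures $\sigma\,dV$ on $D$ and $\sigma\,dS$ on $bD$ are finite); and one should note that a totally bounded set admits an $\varepsilon$-net with centers taken from the set itself, so that the sets $A_i$ genuinely cover $X$.
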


To justify the relative compactness of $\{k_y\}$ in our application of Lemma \ref{compact operator}, we use the following characterization for relatively compact sets, which can be viewed as a generalization of the classical Riesz-Kolmogorov theorem.
\begin{lemma}\label{relatively compact}
Let $\mu$ be a finite Borel measure on $X$ such that $\inf_{x\in X} \mu(B(x,r)) >0$ for any $r>0$ and let $1\leq p<\infty$. If $K \subseteq L^p(X,\mu)$ is a bounded set satisfying
$$
    \lim_{r\rightarrow 0^+} \sup_{f \in K}\int_X |f(x)-\langle f\rangle_{B(x,r),\mu}|^p\,d\mu(x)=0,
$$
then $K$ is relatively compact in $L^p(X,\mu)$. 
\end{lemma}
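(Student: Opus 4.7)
The plan is to combine the hypothesis with a weak-to-strong convergence argument for the averaging operator $A_r f(x) := \langle f \rangle_{B(x,r),\mu}$. Given $\varepsilon > 0$, the hypothesis yields $r > 0$ so that $\sup_{f \in K}\|f - A_r f\|_{L^p(X,\mu)} < \varepsilon$. Consequently $K$ lies in the $\varepsilon$-neighborhood of the image $A_r(K)$ in $L^p(X,\mu)$, and the problem reduces to showing that $A_r(K)$ is relatively compact for this fixed $r$.

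For the reduced problem, I would exploit the fact that the kernel $k_r(x,y) := \mathbf{1}_{B(x,r)}(y)/\mu(B(x,r))$ of $A_r$ is bounded by $1/\inf_{x \in X}\mu(B(x,r))$, which by hypothesis is finite. This makes $A_r$ a bounded operator from $L^p(X,\mu)$ into $L^\infty(X,\mu)$, so $\{A_r f : f \in K\}$ is uniformly bounded in $L^\infty(X,\mu)$. Given any sequence $\{f_n\} \subseteq K$, I would extract a weakly convergent subsequence $f_{n_k} \rightharpoonup f$ in $L^p(X,\mu)$: for $1 < p < \infty$ this is immediate from reflexivity of $L^p$, while for $p = 1$ it requires applying the Dunford--Pettis theorem, hence verifying that $K$ is uniformly integrable. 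Once a weakly convergent subsequence is in hand, the fact that $k_r(x,\cdot) \in L^{p'}(X,\mu)$ for every $x$ (with $p' = \infty$ when $p = 1$) forces $A_r f_{n_k}(x) \to A_r f(x)$ pointwise, and the uniform $L^\infty$ bound on $\{A_r f_n\}$ together with $\mu(X) < \infty$ upgrades this to $L^p$ convergence via the dominated convergence theorem. This shows $A_r(K)$ is sequentially relatively compact in $L^p(X,\mu)$, which combined with the reduction step proves that $K$ itself is relatively compact.

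The main obstacle is the endpoint $p = 1$, where reflexivity is unavailable. Here I would deduce uniform integrability of $K$ directly from the hypothesis: for any measurable $E \subseteq X$ and $f \in K$,
\[
\int_E |f|\,d\mu \leq \|f - A_r f\|_{L^1(X,\mu)} + \|A_r f\|_{L^\infty(X,\mu)}\,\mu(E) \leq \|f - A_r f\|_{L^1(X,\mu)} + \frac{\|f\|_{L^1(X,\mu)}}{\inf_{x \in X}\mu(B(x,r))}\,\mu(E).
\]
The first term is uniformly small in $f \in K$ once $r$ is chosen small via the hypothesis, and the second is then controlled by taking $\mu(E)$ small, using that $K$ is $L^1$-bounded. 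This verifies the hypothesis of the Dunford--Pettis theorem and closes the argument uniformly in $1 \leq p < \infty$.
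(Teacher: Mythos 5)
Your argument is correct. Note first that the paper does not actually prove this lemma---it is quoted from Kalamajska and from Lemma 1 of Ivanishko and Krotov---so there is no internal proof to match, and what you have written is a self-contained proof of the cited result. Your route also differs from the standard Riesz--Kolmogorov-type argument in those references, which establishes total boundedness of $A_r(K)$ by hand: one uses $\mu(X)<\infty$ together with $\inf_{x}\mu(B(x,r))>0$ to cover $X$ by finitely many balls of radius comparable to $r$, replaces $A_rf$ by a simple function adapted to that cover, and thereby maps $K$, up to a uniformly small error, into a bounded subset of a finite-dimensional space. Your substitute---observing that $A_r$ has kernel bounded by $1/\inf_{x}\mu(B(x,r))$ and hence maps $L^p$ boundedly into $L^\infty$, extracting a weakly convergent subsequence (reflexivity for $p>1$, Dunford--Pettis for $p=1$), passing to pointwise convergence of the averages, and upgrading to $L^p$ convergence by dominated convergence on the finite measure space---is sound, and your derivation of uniform integrability of $K$ at $p=1$ directly from the approximation hypothesis is exactly the right way to close the Dunford--Pettis loop; the reduction of relative compactness of $K$ to that of $A_r(K)$ via total boundedness is also standard and correctly invoked. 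The only point worth flagging is that you implicitly use measurability of $x\mapsto \langle f\rangle_{B(x,r),\mu}$, which in a quasi-metric space (where balls need not be open) is a genuine subtlety, but it is already presupposed by the statement of the lemma, so nothing is lost. The covering argument buys an entirely elementary proof; yours is shorter to write once weak compactness in $L^1$ is granted.
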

\noindent Lemma \ref{relatively compact} was originally stated for the case of metric spaces in \cite{K1999}, but we will need a version from \cite[Lemma 1]{IK2009} in the case where we only have a quasi-metric.

%In our case, our ambient space is $\mathbb C^n$ (equivalently $\mathbb{R}^{2n}$) and the operator of interest is $\mathcal T$. 
We next apply Lemma \ref{compact operator} and Lemma \ref{relatively compact} to prove the following result.
\begin{lemma}\label{BergmanLCompactness} 
If $\sigma$ is a $B_1$ weight, then the  operator $\mathcal T^*-\mathcal T$ is compact on $L^1_\sigma(D)$.
\end{lemma}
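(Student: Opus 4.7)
The plan is to apply Lemma \ref{compact operator} directly with $X = D$ and $\mu = \sigma\, dV$. If $K(z,w)$ denotes the kernel of $\mathcal{T}$ with respect to Lebesgue measure, then $\mathcal{T}^* - \mathcal{T}$ is an integral operator with Lebesgue kernel $H(z,w) := \overline{K(w,z)} - K(z,w)$, and rewritten against $\mu$ its kernel is $\tilde H(z,w) := H(z,w)/\sigma(w)$. It suffices to show $\sup_{w \in D} \int_D |\tilde H(z,w)|\, d\mu(z) < \infty$ and that the family $\{\tilde H(\cdot, w)\}_{w \in D}$ is relatively compact in $L^1(D,\mu)$.

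The crucial input is Kerzman--Stein cancellation. In the unweighted setting of \cite{LS2012} it is established that the leading singular parts of $K(z,w)$ and $\overline{K(w,z)}$ at the boundary diagonal cancel, yielding an improved pointwise estimate
$$|H(z,w)| \lesssim \frac{1}{d(z,w)^{n+1-\epsilon}} \quad\text{for some } \epsilon > 0,$$
together with uniform continuity of $H$ on $\overline{D}\times \overline{D}$ away from the boundary diagonal. This bound transfers unchanged to our setting since the kernel is independent of $\sigma$.

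Given this estimate, I would verify the Lemma \ref{compact operator} hypotheses as follows. For the boundedness condition, fix $w \in D$, set $\delta_w := d(w,bD)$, and dyadically split $D$ into quasi-annuli around $w$. On each annulus the kernel bound, the growth condition \eqref{Vgrowth}, and the defining $B_1$ property $\mathcal{M}\sigma \lesssim \sigma$ (which, via $B_1$ doubling, allows the replacement of $\int_B \sigma\, dV$ by a constant multiple of $\sigma(w) V(B)$ for quasi-balls meeting the boundary) yield a geometrically convergent sum giving
$$\sup_{w \in D}\;\sigma(w)^{-1}\int_D |H(z,w)|\, \sigma(z)\, dV(z) < \infty.$$
Relative compactness of $\{\tilde H(\cdot, w)\}_{w \in D}$ in $L^1(\mu)$ then follows from Lemma \ref{relatively compact}: uniform $L^1(\mu)$-boundedness is the preceding bound, and the oscillation condition
$$\lim_{r \to 0^+}\; \sup_{w \in D} \int_D \bigl|\tilde H(z,w) - \langle \tilde H(\cdot, w)\rangle_{B(z,r), \mu}\bigr|\, d\mu(z) = 0$$
is verified by splitting the integral into the region near $\{w\}\cup bD$, where the contribution is controlled by the improved kernel bound and the same $B_1$ computation, and its complement, where the uniform continuity of $H$ (combined with $B_1$ doubling to absorb the $\sigma(w)^{-1}$ normalization) makes the oscillation over quasi-balls of radius $r$ small with a $w$-independent modulus.

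The principal obstacle is securing the oscillation estimate uniformly in $w$ as $w$ approaches $bD$. In that regime the singular set of $\tilde H(\cdot, w)$ enlarges while the normalization $\sigma(w)^{-1}$ degrades, so the uniform modulus of continuity on the ``good'' part and the uniform smallness on the ``bad'' part must be obtained simultaneously using $B_1$ doubling of $\sigma$ near $bD$ together with the improved kernel estimate; once this uniform control is in hand, Lemma \ref{compact operator} delivers the compactness of $\mathcal{T}^* - \mathcal{T}$ on $L^1_\sigma(D)$.
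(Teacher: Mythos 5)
Your overall architecture is the same as the paper's: apply Lemma \ref{compact operator} with $\mu = \sigma\,dV$, verify the uniform $L^1(\mu)$ bound on the kernel family by dyadic integration, and verify the oscillation hypothesis of Lemma \ref{relatively compact} by splitting $D$ into a region away from $\{w\}\cup bD$ (handled by uniform continuity) and the remaining region (handled by the kernel estimate and the $B_1$ condition). However, there is a genuine gap in the kernel estimates you invoke. You use only the near-diagonal cancellation bound $|H(z,w)|\lesssim d(z,w)^{-(n+1-\epsilon)}$, and you yourself note that the $B_1$ property only converts $\int_B\sigma\,dV$ into $\sigma(w)V(B)$ for quasi-balls \emph{meeting the boundary}. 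These two ingredients do not close the argument: when you dyadically decompose around an interior point $w$, the annuli at scales below $d(w,bD)$ correspond to quasi-balls that do not meet $bD$, and for those the $B_1$ condition gives no control of $\langle\sigma\rangle_{B(w,2^{-j}R)}$ in terms of $\sigma(w)$ (a $B_1$ weight need not satisfy $\mathcal{M}_{\mathrm{full}}\sigma\lesssim\sigma$ for the unrestricted maximal operator). So the contribution of $B(w,d(w,bD))$ to $\sigma(w)^{-1}\int_D|H(z,w)|\sigma(z)\,dV(z)$ is not bounded by your argument.

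The paper resolves this with a second, equally essential estimate from \cite[Lemma 3.14]{WW2020} (not \cite{LS2012}; the $C^4$ regularity is what produces the exponent $n+\tfrac12$):
$$|k(z,w)|\lesssim \min\bigl\{d(z,bD)^{-(n+\frac12)},\, d(w,bD)^{-(n+\frac12)}\bigr\}.$$
This bound kills the singularity on the innermost ball: on $B(w,d(w,bD))$ the kernel is simply $\lesssim d(w,bD)^{-(n+\frac12)}$, and $B(w,d(w,bD))$ is a boundary-reaching quasi-ball, so $\sigma(B(w,d(w,bD)))\lesssim \sigma(w)\,d(w,bD)^{n+1}$ by $B_1$, yielding the contribution $\lesssim \sigma(w)\,d(w,bD)^{1/2}$. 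The dyadic sum is then only run down to scale $d(w,bD)$, where every ball meets $bD$ and the $B_1$ condition applies. The same boundary-distance bound is also what handles the subcase $d(w,bD)\ge r$ in the oscillation estimate. You should add this estimate to your list of inputs; once it is in place, the rest of your sketch goes through essentially as in the paper.
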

\begin{proof}
First, we note that $\sigma\,dV$ is a finite Borel measure on $D$. Using the $B_1$ condition and the fact that $B(z,R)=D$ for $z \in D$ and sufficiently large $R$, one has 
$$
\sigma(D)\lesssim \left(\inf_{w\in D}\sigma(w)\right)V(D).
$$
The infimum condition on the measure $\sigma dV$ can be verified using a compactness argument and the fact that $B(z,r)$ contains a Euclidean ball with radius comparable to $r^{1/2}$, which was proved in \cite[Proposition 3.5]{WW2020}. Let $k(z,w)$ denote the kernel of $\mathcal{T}^*-\mathcal{T}$ with respect to Lebesgue measure. The following key properties of $k(z,w)$ are proved in \cite[Lemma 3.14]{WW2020}:
$$
    |k(z,w)| \lesssim |g(z,w)|^{-\left(n+\frac{1}{2}\right)} \lesssim d(z,w)^{-\left(n+\frac{1}{2}\right)} 
$$
as well as
$$
    |k(z,w)| \lesssim \min\left\{ d(z,bD)^{-\left(n+\frac{1}{2}\right)}, d(w,bD)^{-\left(n+\frac{1}{2}\right)}\right\}.
$$
Here, the assumption that the boundary of $D$ is of class $C^4$ is in fact crucial. Let $\tilde{k}(z,w)$ denote the kernel of $\mathcal{T}^*-\mathcal{T}$ with respect to the weighted measure $\sigma\,dV$ and notice $\tilde{k}(z,w)=k(z,w) \sigma(w)^{-1}.$ %We remark that since we want to show $\mathcal T^*-\mathcal T$ is compact on $L^1$ instead of $L^\infty$, we interchange the role of the variables in the kernel as they appear in Proposition \ref{compact operator}. \

We claim that there exists $M>0$ such that $\sup_{w \in D}\int_{D}|\tilde{k}(z,w)|\sigma(z)\,dV(z)<M.$ To see this, fix $w \in D$ and integrate over dyadic annuli, choosing $R$ so that $B(w,R)=D$ and letting $N$ be the largest positive integer such that $B(w,2^{-N}R)$ meets the boundary of $D$. We use the above control of $|k(z,w)|$ and \eqref{Vgrowth} to obtain
\begin{align*}
\int_{D}|\tilde{k}(z,w)|\sigma(z)\,dV(z) & =  \sigma(w)^{-1} \int_{D} |k(z,w)| \sigma(z)\,dV(z)\\
& =  \sigma(w)^{-1} \sum_{j=0}^{N} \int_{B(w,2^{-j}R) \setminus B(w,2^{-(j+1)}R)} d(z,w)^{-\left(n+\frac{1}{2}\right)}\sigma(z) \mathop{dV(z)}\\
& +  \sigma(w)^{-1}\int_{B(w,2^{-(N+1)}R)} d(w,bD)^{-(n+\frac{1}{2})} \sigma(z) \mathop{dV(z)}\\
 & \lesssim  \sigma(w)^{-1} \sum_{j=0}^{N} \frac{2^{-j/2}R^{1/2}}{V(B(w,2^{-j}R))}\int_{B(w,2^{-j}R)} \sigma(z) \mathop{dV(z)}\\
 & +     \sigma(w)^{-1}\frac{d(w,bD)^{1/2}}{V(B(w,d(w,bD)))}\int_{B(w,d(w,bD))} \sigma(z)\,dV(z)\\
& \leq  \sigma(w)^{-1}\sum_{j=0}^{N} 2^{-j/2} R^{1/2} \mathcal{M}\sigma(w)+ \sigma(w)^{-1} d(w,bD)^{1/2} \mathcal{M}\sigma(w)\\
& \lesssim  \sigma(w)^{-1} (R^{1/2}+d(w,bD)^{1/2}) \mathcal{M}\sigma(w)\\
& \lesssim  R^{1/2}.
\end{align*}
Note that we used the $B_1$ condition in the last line above. All the implicit constants are independent of $w$, and $R$ is also independent of $w$ since we can just take $R$ to be the diameter of $D$ in the quasi-metric. This establishes the claim. Notice that this argument also shows that replacing the region of integration by a quasi-ball $B(w,\delta)$ yields
\begin{align}
\int_{B(w,\delta)}|\tilde{k}(z,w)|\sigma(z)\,dV(z)\lesssim \delta^{1/2} +d(w,bD)^{1/2}, \label{1}
\end{align}
where the implicit constant is independent of $w$.

Now we must show the crucial condition
$$
    \lim_{r\rightarrow 0^+} \sup_{w \in D} \sigma(w)^{-1} \int_D |k_w(z)-\langle k_w\rangle_{B(z,r),\sigma dV}|\sigma(z)\,dV(z)=0,
$$ where $k_w(z)=k(z,w).$ %Note the average of the kernel function $\langle k_w \rangle$ is taken with respect to the weighted measure $\mathop {d\sigma}.$ 
Fix $\varepsilon>0$, $w \in D$, and let $\delta>0$ and $0< r<\delta$ be constants to be fixed later. We emphasize all constants obtained will ultimately be independent of $w$. 

Let $G:=\{z \in D: d(z,w)\geq \delta \text{ or } d(z,bD) \geq \delta \}$. We will first estimate
$$
    \sigma^{-1}(w)\int_{G}|k_w(z)-\langle k_w \rangle_{B(z,r),\sigma dV}|\sigma(z)\,dV(z).
$$ 
Recall that the kernel function $k(z,w)$ is uniformly continuous on compact subsets off the boundary diagonal, so in particular the function $k_w(z)$ is uniformly continuous on $G$ with a modulus of continuity independent of $w$. We can choose $r$ sufficiently small relative to $\delta$ and independent of $w$ so that we have $|k_w(z)-\langle k_w \rangle_{B(z,r),\sigma dV}|< \varepsilon$ for $z \in G$ and hence,

\begin{align*}
\sigma(w)^{-1}\int_{G}|k_w(z)-\langle k_w \rangle_{B(z,r),\sigma dV}|\sigma(z)\,dV(z) & \leq  \varepsilon\sigma(w)^{-1}\int_{D} \sigma(z)\,dV(z)\\
& \lesssim  \varepsilon \sigma(w)^{-1} \mathcal{M} \sigma(w)\\
& \lesssim  \varepsilon
\end{align*}
as required. We used the $B_1$ condition of $\sigma$ in the last inequality above.

Now we need to estimate the integral on $D \setminus G$. Note $D \setminus G =B(w, \delta) \cap A$, where $A:=\{z:d(z,bD)< r \}$.
We have
\begin{align*}
    \sigma(w)^{-1}\int_{D \setminus G}|k_w(z)-\langle k_w \rangle_{B(z,r),\sigma dV}|\sigma(z)\,dV(z)\leq &\,\, \sigma(w)^{-1}\left( \int_{D \setminus G}|k_w(z)|\sigma(z)\,dV(z)\right.\\
    &+ \left. \int_{D \setminus G} |\langle k_w \rangle_{B(z,r),\sigma dV}|\sigma(z)\,dV(z)\right).
\end{align*}
By \eqref{1}, it is easy to deduce 
$$
    \sigma(w)^{-1} \int_{D \setminus G}|k_w(z)|\sigma(z)\,dV(z) \lesssim \delta^{1/2}.
$$ 

We will also show 
$$
    \sigma(w)^{-1} \int_{D \setminus G} |\langle k_w \rangle_{B(z,r),\sigma dV}|\sigma(z)\,dV(z) \lesssim \delta^{1/2}
$$ 
using similar methods. We consider two separate regions of integration based on the relative positions of $z$ and $w$. First, suppose that $cr<\frac{1}{2} d(z,w)$. One can show that if $\zeta \in B(z,r)$, then $d(z,w) \lesssim d(\zeta,w)$ with an implicit constant independent of $z$ and $w$. We then estimate
\begin{align*}
 & \sigma(w)^{-1} \int_{(B(w,\delta) \setminus B(w,2cr)) \cap A} |\langle k_w \rangle_{B(z,r),\sigma dV}|\sigma(z)\,dV(z)\\
 & \leq \sigma(w)^{-1} \int_{(B(w,\delta) \setminus B(w,2cr)) \cap A}\frac{1}{\sigma(B(z,r))} \int_{B(z,r)} d(\zeta,w)^{-\left(n+\frac{1}{2}\right)} \sigma(\zeta) \mathop{dV(\zeta)} \sigma(z) \mathop{d V(z)}\\
 & \lesssim \sigma(w)^{-1} \int_{B(w,\delta) \cap A}  d(z,w)^{-\left(n+\frac{1}{2}\right)}  \sigma(z) \mathop{d V(z)} \\
 & \lesssim \delta^{1/2}
\end{align*}
as before. We have used the $B_1$ condition of $\sigma$ in the third inequality above. 

On the other hand, if $d(z,w) \leq 2cr$, then $B(z,r) \subseteq B(w, Cr)$ and $B(w,r) \subseteq B(z,Cr)$, where $C=2c^2+c$. We first consider a further subcase where $d(w,bD) <r$. In this case, note $d(z,bD) \lesssim r$ on this set as well by the quasi-triangle inequality. Thus, we calculate:

\begin{align*}
 & \sigma(w)^{-1} \int_{B(w,2cr) \cap A}\frac{1}{\sigma(B(z,r))} \int_{B(z,r)} d(\zeta,w)^{-\left(n+\frac{1}{2}\right)} \sigma(\zeta) \mathop{dV(\zeta)} \sigma(z) \mathop{d V(z)}\\
 & \leq \sigma(w)^{-1} \frac{1}{\sigma(B(w,r))}\int_{B(w,2cr)\cap A} \frac{\sigma(B(z,Cr))}{\sigma(B(z,r))} \int_{B(w,Cr)} d(\zeta,w)^{-\left(n+\frac{1}{2}\right)} \sigma(\zeta) \mathop{dV(\zeta)} \sigma(z) \mathop{d V(z)}\\
 & \lesssim \delta^{1/2} \frac{1}{\sigma(B(w,r))}\int_{B(w,2cr) \cap A} \frac{\sigma(B(z,Cr))}{\sigma(B(z,r))}  \sigma(z) \mathop{d V(z)}\\
 & \lesssim \delta^{1/2}
 \end{align*}
using the $B_1$ condition in the second inequality and the doubling property of $\sigma$ in the third inequality. For the second subcase, suppose $d(w,bD) \geq r$ and note that we still assume $d(z,w) \leq 2 cr$, so we in fact have $d(w,bD)^{-(n+1/2)} \lesssim d(z,w)^{-(n+1/2)}$. We estimate
\begin{align*}
 & \sigma(w)^{-1} \int_{ B(w,2cr) \cap A} |\langle k_w \rangle_{B(z,r),\sigma dV}|\mathop{d\sigma(z)}\\
 & \leq \sigma(w)^{-1} \int_{ B(w,2cr) \cap A}\frac{1}{\sigma(B(z,r))} \int_{B(z,r)} d(w,bD)^{-\left(n+\frac{1}{2}\right)} \sigma(\zeta) \mathop{dV(\zeta)} \sigma(z) \mathop{d V(z)}\\
 & \lesssim \sigma(w)^{-1} \int_{B(w,\delta) \cap A}  d(z,w)^{-\left(n+\frac{1}{2}\right)}  \sigma(z) \mathop{d V(z)} \\
 & \lesssim \delta^{1/2},
\end{align*}
where we have used the $B_1$ condition in the third inequality.

Thus, we obtain
$$
    \sigma(w)^{-1}\int_{D \setminus G}|k_w(z)-\langle k_w \rangle_{B(z,r),\sigma dV}|\sigma(z)\,dV(z)\lesssim \delta^{1/2}
$$
with an independent implicit constant. This can be made less than $\varepsilon$ by making an appropriately small choice of $\delta$, completing the proof.
\end{proof}

We need the following lemma to conclude that $(I-(\mathcal{T}^*-\mathcal{T}))$ is invertible on $L^1_{\sigma}(D)$. 
\begin{lemma}\label{Bergmanspectrum}
If $\sigma \in B_1$, the number $1$ is not an eigenvalue of $\mathcal{T}^*-\mathcal{T}$ considered as an operator on $L^1_{\sigma}(D)$.
\end{lemma}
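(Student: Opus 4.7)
The plan is to reduce to the unweighted $L^{2}(D)$ setting, where the skew-adjointness of $A := \mathcal{T}^{*} - \mathcal{T}$ will immediately exclude $1$ from the point spectrum. I would start by supposing $f \in L^{1}_{\sigma}(D)$ satisfies $Af = f$, denoting the kernel of $A$ by $k(z,w)$, with the goal of showing $f \equiv 0$.

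The first step I would carry out is a regularity bootstrap upgrading $f$ to $L^{2}(D)$. The estimate $|k(z,w)| \lesssim d(z,w)^{-(n+1/2)}$ recorded in the proof of Lemma \ref{BergmanLCompactness} represents a half-order improvement over the critical Calder\'on--Zygmund exponent $n+1$ in the space of homogeneous type $(D,d,V)$, so $A$ effectively acts as a fractional integral of order $1/2$. Combining this with the boundary control $|k(z,w)| \lesssim \min\{d(z,bD), d(w,bD)\}^{-(n+1/2)}$ and a Schur-type computation patterned on Lemma \ref{BergmanLCompactness}, I would show that $A$ maps $L^{p}_{\sigma}(D)$ boundedly into $L^{q}_{\sigma}(D)$ for some $q > p$ in an appropriate range, i.e., a weighted Hardy--Littlewood--Sobolev-type estimate tailored to the $B_{1}$ weight $\sigma$. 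Iterating the identity $f = A^{N}f$ finitely many times would then produce $f \in L^{p_{j}}_{\sigma}(D)$ along a sequence $p_{j} \to \infty$, and one final step lands $f$ in $L^{\infty}(D) \subseteq L^{2}(D)$.

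Once $f \in L^{2}(D)$, I would close the argument via $L^{2}$ skew-adjointness. By construction, the $L^{2}(D)$-adjoint of $A$ is $\mathcal{T} - \mathcal{T}^{*} = -A$, so $A$ is skew-adjoint. Consequently, for every $g \in L^{2}(D)$,
\[
\langle Ag, g\rangle = \langle g, A^{*}g\rangle = -\langle g, Ag\rangle = -\overline{\langle Ag, g\rangle} \in i\mathbb{R}.
\]
Applying this with $g = f$ together with $Af = f$ yields $\|f\|_{L^{2}(D)}^{2} = \langle Af, f\rangle \in i\mathbb{R}$, and since this quantity is also a nonnegative real number, it must vanish; hence $f \equiv 0$, as desired.

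The hardest part will be the regularity bootstrap. Because the $B_{1}$ condition only constrains $\sigma$ on balls meeting the boundary, $\sigma$ can be arbitrarily small in the interior, so $L^{1}_{\sigma}(D)$ is not comparable to any unweighted $L^{p}(D)$ and the bootstrap has to be run entirely in weighted spaces. Executing it requires a delicate interplay between the doubling of $\sigma$ on boundary balls and the fractional decay of $k(z,w)$, in the same spirit as the estimates already performed in Lemma \ref{BergmanLCompactness}. Once that weighted fractional integration step is in place, the $L^{2}$ skew-adjointness computation above closes the proof essentially for free.
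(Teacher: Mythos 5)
Your overall strategy---bootstrap the eigenfunction into $L^2(D)$ and then use the skew-adjointness of $\mathcal{T}^*-\mathcal{T}$ to conclude that its $L^2$ point spectrum is purely imaginary---is exactly the paper's strategy, and your closing $L^2$ computation is correct. However, the component you identify as ``the hardest part'' rests on a false premise, and this is where your plan goes wrong. You claim that the $B_1$ condition only constrains $\sigma$ on balls meeting the boundary, so that $\sigma$ could be arbitrarily small in the interior and $L^1_\sigma(D)$ would not embed into any unweighted $L^p(D)$. This is not so: the supremum in the definition of $[\sigma]_{B_1}$ runs over all quasi-balls $B(z,r)$ with $r>d(z,bD)$, and for $r$ large enough $B(z,r)=D$. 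Applying the condition to that single ball gives $\langle\sigma\rangle_D\,\|\sigma^{-1}\|_{L^\infty(D)}<\infty$, i.e.\ every $B_1$ weight is (essentially) bounded below on all of $D$. Consequently $\|f\|_{L^1(D)}\lesssim\|f\|_{L^1_\sigma(D)}$, and the entire bootstrap can be run in \emph{unweighted} spaces: one invokes the smoothing estimate $\mathcal{T}^*-\mathcal{T}:L^p(D)\to L^{p+\varepsilon}(D)$ for $p\ge 1$ (which is Proposition 3.16 of \cite{WW2020}, already established there) and iterates $f=(\mathcal{T}^*-\mathcal{T})^N f$ finitely many times to reach $L^2(D)$, since $D$ is bounded. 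This is precisely what the paper does.

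Your proposed substitute---a weighted Hardy--Littlewood--Sobolev estimate $L^p_\sigma\to L^q_\sigma$ with $q>p$ adapted to $B_1$ weights---is never actually proved in your sketch, only asserted to follow from ``a Schur-type computation,'' so as written the proposal has a genuine gap at its central step. The fix is not to fill that gap but to delete it: the observation that $B_1$ weights are bounded below collapses the weighted problem to the unweighted one in a single line. (A minor additional point: your iteration does not land $f$ in $L^\infty(D)$, only in $L^{1+N\varepsilon}(D)$ for each $N$; but this is harmless, since $L^p(D)\subseteq L^2(D)$ for $p\ge 2$ on a bounded domain, which is all the skew-adjointness argument requires.)
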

\begin{proof}
The proof proceeds in the same way as \cite[Corollary 3.17]{WW2020}. In particular, it was proved in \cite[Proposition 3.16]{WW2020} that there exists $\varepsilon>0$ so that $\mathcal{T}^*-\mathcal{T}$ maps $L^p(D)$ to $L^{p+\varepsilon}(D)$ boundedly for $p \geq 1.$ Thus, if $1$ were an eigenvalue for $\mathcal{T}^*-\mathcal{T}$ with eigenvector $f \in L^1_{\sigma} (D)$, then we would have 
$$\|f\|_{L^{1+\varepsilon}(D)}=\|(\mathcal T^*-\mathcal T)f\|_{L^{1+\varepsilon}(D)} \lesssim \|f\|_{L^1(D)} \lesssim \|f\|_{L^1_{\sigma}(D)} ,$$
noting that a weight in $B_1$ is bounded below. If we repeat this argument a second time, we get $f \in L^{1+2\varepsilon}(D).$ In fact, we can iterate arbitrarily many times to obtain $f\in L^p(D)$ for all $p \geq 1.$ In particular, $f \in L^2(D).$ This is a contradiction because $1$ is not an eigenvalue of $\mathcal{T}^*-\mathcal{T}$ on $L^2(D)$, since all of these eigenvalues are purely imaginary.
\end{proof}

\begin{proof}[Proof of Proposition \ref{BergmanInversion}]
This follows immediately from Lemma \ref{BergmanLCompactness} and Lemma \ref{Bergmanspectrum} using the spectral theorem for compact operators.
\end{proof}
%The following is an immediate consequence of the spectral theorem for compact operators:
%\begin{cor} \label{invertible}
%The operator $(I-(\mathcal{T}^*-\mathcal{T}))$ is invertible on $L^1_{\sigma}(D).$
%\end{cor}

\subsection{Weak-type estimate for the auxiliary operator}
%It remains to prove Proposition \ref{BergmanAuxiliaryWeakType}. We first collect the relevant lemmas.

To show the weighted weak-type $(1,1)$ property for $\mathcal{T}$, we first prove the analogous bound for our maximal operator $\mathcal{M}$.

\begin{lemma}\label{MaximalWeakType}
If $\sigma$ is a $B_1$ weight, then $\mathcal{M}$ maps $L_{\sigma}^1(D)$ into $L_{\sigma}^{1,\infty}(D)$ boundedly. 
\end{lemma}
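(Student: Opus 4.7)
The plan is to mimic the classical proof of the Fefferman–Stein weighted weak-type $(1,1)$ inequality for the Hardy–Littlewood maximal function with respect to $A_1$ weights, adapted to the quasi-metric space of homogeneous type $(D,d,V)$ and to the restricted class of ``admissible'' balls $B(w,r)$ with $r>d(w,bD)$ that defines $\mathcal{M}$. The key inputs from the $B_1$ hypothesis that I will exploit are: the pointwise bound $\mathcal{M}\sigma(z)\lesssim \sigma(z)$ a.e.\ (equivalently $\langle \sigma\rangle_B \lesssim \inf_B \sigma$ for admissible $B$), and the doubling property $\sigma(2B)\lesssim \sigma(B)$ for admissible $B$ that was recorded in the setup.

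First I would fix $\lambda>0$ and set $E_\lambda:=\{z\in D : \mathcal{M}f(z)>\lambda\}$. For each $z\in E_\lambda$ the definition of $\mathcal{M}$ furnishes an admissible quasi-ball $B_z=B(w_z,r_z)$ (i.e.\ $r_z>d(w_z,bD)$) containing $z$ with
$$
V(B_z) < \frac{1}{\lambda}\int_{B_z} |f|\,dV.
$$
Using boundedness of $D$ in the quasi-metric, the radii $r_z$ are uniformly bounded, and a standard Vitali-type $5r$-covering (or its analogue for quasi-metric spaces, which holds since $(D,d,V)$ is of homogeneous type) produces a countable pairwise disjoint subcollection $\{B_{z_j}\}$ with
$$
E_\lambda \subseteq \bigcup_j kB_{z_j}
$$
for a dimensional constant $k$ depending on the quasi-triangle constant $c$. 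Crucially, admissibility is preserved under dilation: if $r_{z_j}>d(w_{z_j},bD)$ then $k r_{z_j}>d(w_{z_j},bD)$ as well, so the $B_1$ doubling and averaging properties apply to each $kB_{z_j}$.

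To finish, I would estimate
$$
\sigma(E_\lambda)\leq \sum_j \sigma(kB_{z_j}) \lesssim \sum_j \sigma(B_{z_j}),
$$
by iterating the doubling property on admissible balls. The $B_1$ condition gives $\sigma(B_{z_j})\lesssim V(B_{z_j})\,\inf_{B_{z_j}}\sigma$, hence
$$
\sigma(B_{z_j}) \lesssim \frac{V(B_{z_j})}{\inf_{B_{z_j}}\sigma}\cdot \inf_{B_{z_j}}\sigma \cdot \inf_{B_{z_j}}\sigma \cdot (\inf_{B_{z_j}}\sigma)^{-1},
$$
or more cleanly, combining with the stopping inequality,
$$
\sigma(B_{z_j}) \lesssim V(B_{z_j}) \inf_{B_{z_j}}\sigma \leq \frac{1}{\lambda}\int_{B_{z_j}} |f|\,\inf_{B_{z_j}}\sigma\,dV \leq \frac{1}{\lambda}\int_{B_{z_j}} |f|\sigma\,dV.
$$
Summing over the disjoint family and using $\bigsqcup_j B_{z_j}\subseteq D$ yields
$$
\sigma(E_\lambda) \lesssim \frac{1}{\lambda}\int_D |f|\sigma\,dV = \frac{1}{\lambda}\|f\|_{L^1_\sigma(D)},
$$
which is the desired weak-type bound.

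The main technical point to be careful about is ensuring that every ball appearing in the argument is admissible so that the $B_1$ conclusions ($\langle\sigma\rangle_B\lesssim \inf_B\sigma$ and doubling) actually apply; this is why I emphasize that dilation preserves the condition $r>d(w,bD)$. Another mild subtlety is the Vitali covering: one must use the version valid in spaces of homogeneous type with a quasi-metric, but this is standard and the constants depend only on $c$ and the doubling constant of $V$, so no new difficulty arises.
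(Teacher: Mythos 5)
Your proof is correct and follows essentially the same route as the paper: a Vitali-type covering of $E_\lambda$ by admissible quasi-balls on which the average of $|f|$ exceeds $\lambda$, followed by the doubling of $\sigma\,dV$ on the dilates and the $B_1$ inequality $\langle\sigma\rangle_B\lesssim\operatorname*{ess\,inf}_B\sigma$ to convert $V(B_j)\operatorname*{ess\,inf}_{B_j}\sigma$ into $\frac{1}{\lambda}\int_{B_j}|f|\sigma\,dV$. The only difference is that the paper first reduces to the centered maximal operator $\widetilde{\mathcal{M}}f(z)=\sup_{r>d(z,bD)}\langle|f|\rangle_{B(z,r)}$ before covering, whereas you apply the covering lemma directly to the uncentered admissible balls; both are valid, and your observation that dilation preserves the admissibility condition $r>d(w,bD)$ is exactly the point that makes the argument go through (the redundant intermediate display before your ``more cleanly'' chain is a cosmetic blemish only).
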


\begin{proof}
It suffices to prove the estimate for the centered version of $\mathcal{M}$, 
$$
    \widetilde{\mathcal{M}}f(z):=\sup_{r>d(z,bD)}\langle |f| \rangle_{B(z,r)},
$$
%$$
%    \widetilde{\mathcal{M}}f(z):=\sup_{r>d(z,bD)}\frac{1}{V(B(z,r))}\int_{B(z,r)}|f|\,dV,
%$$
since we have the pointwise equivalence $\widetilde{\mathcal{M}}f\leq \mathcal{M}f\lesssim \widetilde{\mathcal{M}}f$. Indeed, the first inequality is clear, and the second is justified by the fact that $\langle |f|\rangle_{B} \lesssim \langle |f|\rangle_{B(z,2cr(B))}$
%$\frac{1}{V(B)}\int_B|f|\,dV \lesssim \frac{1}{V(B(z,2cr(B)))}\int_{B(z,2cr(B))}|f|\,dV$
for any $z\in D$ and quasi-ball $B$ containing $z$. 

Let $f \in L_{\sigma}^1(D)$, $\lambda>0$, and $E_{\lambda}:=\{\widetilde{\mathcal{M}}f>\lambda\}$. We show that 
$$
    \sigma\left(E_{\lambda}\right)\lesssim \frac{1}{\lambda}\|f\|_{L_{\sigma}^1(D)}.
$$
For each $z \in E_{\lambda}$, let $B_z$ be a quasi-ball centered at $z$ such that $r(B_z)>d(z,bD)$ and $\langle |f|\rangle_{B_z} > \lambda$. Apply a Vitali-type lemma to obtain a subcollection $\{B_{j}\}_{j=1}^{\infty}$ of $\{B_z\}_{z\in E_{\lambda}}$ consisting of pairwise disjoint quasi-balls such that there exists $R\ge 1$ with $E_{\lambda} \subseteq \bigcup_{j=1}^{\infty}RB_j$. Use the doubling property of $\sigma$, the $B_1$ property of $\sigma$, and the selection property of the $B_j$ to conclude
$$
    \sigma\left(E_{\lambda}\right)\leq \sum_{j=1}^{\infty}\sigma(RB_j) \lesssim \sum_{j=1}^{\infty}\sigma(B_j)
$$
$$
    \lesssim\sum_{j=1}^{\infty} \left(\frac{1}{\|\sigma^{-1}\|_{L^{\infty}(B_j)}}\right)V(B_j)< \sum_{j=1}^{\infty}\left(\inf_{w \in B_j} \sigma(w)\right)\frac{1}{\lambda}\int_{B_j}|f|\,dV\leq \frac{1}{\lambda}\|f\|_{L_{\sigma}^1(D)}.
$$
\end{proof}

For $k \in (0,1)$, define $B_k(z):=B(z,kd(z,bD))$ and
$$
    R_kf(z):=\langle f\rangle_{B_k(z)}.
$$
%$$
%    R_kf(z):= \frac{1}{V(B_k(z))}\int_{B_k(z)}f\,dV,
%$$
The following was proved in \cite[Lemma 3.4]{HWW2020}. Note that in the setting of \cite{HWW2020}, $D$ had smooth boundary, but that assumption was not needed to establish the following lemma. 
\begin{lemma}\label{RkRk'}
For all $k \in \left(0,\frac{1}{2c}\right)$ and all nonnegative $f,g \in L^1(D)$, we have
$$
    \int_Df(R_kg)\,dV\lesssim \int_D (R_{k'}f)g\,dV,
$$
where $k':=\frac{ck}{1-ck}$.
\end{lemma}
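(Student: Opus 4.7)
The plan is a Fubini swap followed by two short geometric observations about the quasi-balls $B_k(z) = B(z,k\,d(z,bD))$. Writing the left-hand side as
$$
\int_D f(z)\,R_kg(z)\,dV(z) \;=\; \iint_{\{(z,w):\,w\in B_k(z)\}}\frac{f(z)\,g(w)}{V(B_k(z))}\,dV(w)\,dV(z)
$$
and applying Fubini, the problem reduces to proving that for every fixed $w\in D$,
$$
\int_{\{z\,:\,w\in B_k(z)\}}\frac{f(z)}{V(B_k(z))}\,dV(z)\;\lesssim\;R_{k'}f(w).
$$
This is accomplished if we can verify (i) a containment $\{z:w\in B_k(z)\}\subseteq B_{k'}(w)$, and (ii) a uniform comparability $V(B_k(z))\gtrsim V(B_{k'}(w))$ whenever $w\in B_k(z)$.

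For (i), suppose $d(z,w)<k\,d(z,bD)$. The quasi-triangle inequality yields
$$
d(z,bD)\leq c\,d(z,w)+c\,d(w,bD)< ck\,d(z,bD)+c\,d(w,bD),
$$
and because $k<1/(2c)$ the factor $1-ck$ is positive, so $d(z,bD)\leq\frac{c}{1-ck}\,d(w,bD)$. Plugging back in gives $d(z,w)<\frac{ck}{1-ck}\,d(w,bD)=k'\,d(w,bD)$, i.e.\ $z\in B_{k'}(w)$.

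For (ii), using the other direction of the quasi-triangle, $d(w,bD)\leq c\,d(w,z)+c\,d(z,bD)<c(1+k)\,d(z,bD)$, so $d(z,bD)$ and $d(w,bD)$ are comparable with constants depending only on $c$ and $k$. Hence $k\,d(z,bD)\approx k'\,d(w,bD)$, and the volume growth \eqref{Vgrowth} gives $V(B_k(z))\approx V(B_{k'}(w))$ throughout the region in question. Combining (i) and (ii),
$$
\int_{\{z\,:\,w\in B_k(z)\}}\frac{f(z)}{V(B_k(z))}\,dV(z)\;\lesssim\;\frac{1}{V(B_{k'}(w))}\int_{B_{k'}(w)}f(z)\,dV(z)\;=\;R_{k'}f(w),
$$
and integrating against $g$ finishes the proof.

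The argument is essentially mechanical once the Fubini identity is in place; the only place care is needed is in step (i), where the constraint $k\in(0,1/(2c))$ is used precisely to guarantee $1-ck>0$ so that $d(z,bD)$ remains bounded by $d(w,bD)$. This is the sole obstacle and it is handled by the hypothesis on $k$.
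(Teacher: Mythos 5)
Your proof is correct. The paper does not prove this lemma itself --- it cites \cite[Lemma 3.4]{HWW2020} --- and your argument (Fubini, then the containment $\{z: w\in B_k(z)\}\subseteq B_{k'}(w)$ together with $V(B_k(z))\gtrsim V(B_{k'}(w))$ via the quasi-triangle inequality and \eqref{Vgrowth}) is exactly the standard duality argument one expects there; in particular you recover the precise constant $k'=\frac{ck}{1-ck}$, and the hypothesis $k<\frac{1}{2c}$ is used correctly to keep $1-ck>0$.
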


\begin{lemma}\label{gisconstant}
There exists $k>0$ such that $|g(z,w)|\approx|g(z,w')|$ for all $z,w,w'\in D$ satisfying $d(w,w')\leq kd(w,bD)$.
\end{lemma}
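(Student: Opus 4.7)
I plan to prove Lemma \ref{gisconstant} by showing that $|g(z,w)-g(z,w')| \leq \tfrac{1}{2}|g(z,w)|$ once $k$ is chosen small enough, which by the triangle inequality forces $|g(z,w)| \approx |g(z,w')|$. The argument will combine a lower bound for $|g(z,w)|$ with a mean-value estimate for the difference, after which the two are compared using the structure of the quasi-metric in the special coordinates.

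For the lower bound, I would use the hypothesis $\mathrm{Re}\,g(z,w) \gtrsim -\rho(w)-\rho(z)+|z-w|^2$ together with the standard comparability $-\rho(w) \approx d(w,bD)$ furnished by the quasi-metric $d$. Dropping the nonnegative term $-\rho(z)$ gives
\[
|g(z,w)| \gtrsim d(w,bD) + |z-w|^2.
\]
For the upper bound, I would differentiate the explicit formula $g(z,w) = -\chi(z,w) P_w(z) + (1-\chi(z,w))|z-w|^2$ in the $w$ variable. Since $\rho$ is $C^4$ and $\chi$ is smooth, the derivatives $\nabla_w g(z,\tilde w)$ are uniformly controlled for $\tilde w$ in a small Euclidean neighborhood of $w$. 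The key refinement, which I would verify in the special holomorphic coordinates centered at $w$, is that in those coordinates the tangential components of $\partial P_w(z)/\partial w_j$ carry an additional factor of $|z-w|$ (the linear part of $P_w$ is normal in these coordinates, so differentiating tangentially kills the order-zero contribution). Together with a mean-value integration along the segment from $w$ to $w'$, this yields the directionally sharp bound
\[
|g(z,w)-g(z,w')| \lesssim |w_1-w'_1| + |z-w|\sum_{j=2}^n |w_j - w'_j|.
\]

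To close the argument, I would translate coordinate-wise Euclidean sizes into the quasi-distance. The local description $d(w,w') \approx |w_1-w'_1| + \sum_{j\geq 2}|w_j-w'_j|^2$ bounds the first term by $d(w,w') \leq k\,d(w,bD)$ and the second by $|z-w|\cdot d(w,w')^{1/2}$. Applying a weighted AM-GM with parameter $k^{1/2}$,
\[
|z-w|\cdot d(w,w')^{1/2} \leq k^{1/2}|z-w|^2 + \frac{1}{4k^{1/2}} d(w,w') \leq k^{1/2}\bigl(|z-w|^2 + d(w,bD)\bigr),
\]
so combining with the first term gives $|g(z,w)-g(z,w')| \lesssim k^{1/2}\bigl(d(w,bD)+|z-w|^2\bigr) \lesssim k^{1/2} |g(z,w)|$. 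Choosing $k$ sufficiently small yields $|g(z,w)-g(z,w')| \leq \tfrac12|g(z,w)|$, which is the desired conclusion.

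The main obstacle will be the directional derivative estimate in step two: proving that tangential differentiation of $P_w$ in the special coordinates genuinely produces the extra factor of $|z-w|$ requires a careful computation using the $C^4$ defining function, the transformation law of the Levi polynomial under the special-coordinate change, and checking the transition region where $\nabla\chi$ is supported. Away from that region the formula $g = -P_w(z)$ (respectively $g=|z-w|^2$) is clean, and the required structure can be read off directly.
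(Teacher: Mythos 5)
Your argument is correct in outline and reaches the conclusion by a genuinely different route from the paper. The paper splits into two cases according to whether $d(z,w)\gtrsim d(w,bD)$ or not: in the first case it invokes the already-known equivalence $|g(z,w)|\approx|g(z,w')|$ for $d(w,w')\leq Cd(z,w)$ from \cite[Lemma 3.12]{WW2020}, and in the second it runs a triangle-inequality argument using the difference bound $|g(z,w)-g(z,w')|\lesssim d(w,w')^{1/2}d(z,w)^{1/2}+d(w,w')$ together with $d(w',bD)\lesssim|g(z,w')|$, plus a symmetric argument for the reverse inequality. You instead absorb the difference into $|g(z,w)|$ in one stroke, using the lower bound $|g(z,w)|\gtrsim d(w,bD)+|z-w|^2$ and a weighted AM--GM with parameter $k^{1/2}$; this avoids both the case split and the need to argue the two inequalities separately, which is arguably cleaner. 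The one caution is that your ``main obstacle'' --- the tangential-derivative gain in the special coordinates --- is exactly the content of \cite[Lemma 3.12]{WW2020}, stated there in the quasi-metric form $|g(z,w)-g(z,w')|\lesssim d(w,w')^{1/2}d(z,w)^{1/2}+d(w,w')$; you should cite it rather than reprove it (the coordinate transformation law and the $\nabla\chi$ transition region are genuinely fussy, and the $C^4$ hypothesis enters there). Your absorption step goes through verbatim with that cited form: bound $d(z,w)^{1/2}d(w,w')^{1/2}\leq k^{1/2}d(z,w)+\tfrac{1}{4}k^{-1/2}d(w,w')$ and use $d(z,w)\lesssim|g(z,w)|$ and $d(w,bD)\lesssim|g(z,w)|$ (both also from \cite[Lemma 3.12]{WW2020}) in place of your Euclidean lower bound, giving $|g(z,w)-g(z,w')|\lesssim k^{1/2}|g(z,w)|$ as desired.
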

\begin{proof}
From the proof of \cite[Lemma 3.12]{WW2020}, we know $|g(z,w)|\approx |g(z,w')|$ whenever $d(w,w')\leq Cd(z,w)$, where $C>0$ is an absolute constant. If $d(w,bD) < \frac{C}{k}d(z,w)$, then $d(w,w') \leq kd(w,bD)<Cd(z,w)$, and hence $|g(z,w)|\approx|g(z,w')|$. 

We may now assume that $d(z,w)\leq \frac{k}{C}d(w,bD)$. In this case, we use the triangle inequality, the fact that $|g(z,w)-g(z,w')|\lesssim d(w,w')^{\frac{1}{2}}d(z,w)^{\frac{1}{2}}+d(w,w')$ (which was proven in \cite[Lemma 3.12]{WW2020}), and the assumptions to get
\begin{align*}
    |g(z,w)|&\leq |g(z,w)-g(z,w')|+|g(z,w')|\\
    &\lesssim d(w,w')^{\frac{1}{2}}d(z,w)^{\frac{1}{2}}+d(w,w')+|g(z,w')|\\
    &\leq\left(\frac{k}{C^{1/2}}+k\right)d(w,bD)+|g(z,w')|.
\end{align*}
Now using the triangle inequality and the hypothesis, we have
$$
    d(w,bD)\leq cd(w,w')+cd(w',bD)\leq ckd(w,bD)+cd(w',bD).
$$
Choosing $k$ sufficiently small, the above line implies $d(w,bD) \lesssim d(w',bD)$, and so
$$
    |g(z,w)|\lesssim d(w',bD)+|g(z,w')|.
$$
Again referring to \cite[Lemma 3.12]{WW2020}, we have $d(w',bD)\lesssim |g(z,w')|$, and we conclude 
$$
    |g(z,w)|\lesssim |g(z,w')|.
$$
A symmetric argument proves the reverse inequality, establishing the lemma.
\end{proof}

The following lemma is a modified version of the Calder\'on-Zygmund decomposition.

\begin{lemma}\label{CZD}
For any $\lambda >0$, $k \in (0,1)$, and nonnegative $f \in L^1(D)$, we can write $f\approx f_1+f_2$, where
\begin{enumerate}
    \item $R_kf_1 \lesssim \lambda$,
    \item there exists a countable collection of almost disjoint quasi-balls $\mathcal{F}$ such that $r(B) \ge \frac{1}{2}d(B,bD)$ for each $B\in\mathcal{F}$ and $f_2 \approx \sum_{B\in\mathcal{F}}f_{2,B}$ where the $f_{2,B}$ are supported in $B$ with $\langle |f_{2,B}|\rangle_B\leq \lambda$, and
    \item $\sum_{B\in\mathcal{F}} \sigma(B)\lesssim \frac{1}{\lambda}\|f\|_{L_{\sigma}^{1}(D)}$.
\end{enumerate}
\end{lemma}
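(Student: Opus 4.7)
My plan is a Calder\'on--Zygmund-type decomposition of $f$ at level $\lambda$, adapted to the quasi-metric $d$ and the boundary geometry of $D$, using the weighted weak-type $(1,1)$ of $\mathcal{M}$ from Lemma~\ref{MaximalWeakType} and the $B_1$ condition to convert Lebesgue-volume estimates on the selected quasi-balls into weighted $\sigma$-estimates.

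Set $E_\lambda := \{z \in D : \mathcal{M}f(z) > \lambda\}$; by Lemma~\ref{MaximalWeakType}, $\sigma(E_\lambda) \lesssim \lambda^{-1}\|f\|_{L^1_\sigma(D)}$. For each $z \in E_\lambda$, the definition of $\mathcal{M}$ yields a quasi-ball $B_z = B(w_z, r_z)$ with $z \in B_z$, $r_z > d(w_z, bD)$, and $\langle f\rangle_{B_z} > \lambda$. A Vitali-type selection lemma in the space of homogeneous type $(D,d,V)$ extracts a pairwise disjoint countable subfamily $\{B_j^*\}$ whose fixed dilates cover $E_\lambda$. I would then take $\mathcal{F}$ to be a Whitney-type refinement of this collection chosen so that each $B \in \mathcal{F}$ is admissible for $\mathcal{M}$, whence $r(B) > d(c(B),bD) \geq \tfrac{1}{2}d(B,bD)$, and also meets $E_\lambda^c$; the latter forces $\langle f\rangle_B \leq \lambda$ by admissibility and the definition of $E_\lambda$.

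Now set $f_{2,B} := f\chi_B$, $f_2 := \sum_{B \in \mathcal{F}} f_{2,B}$ (with the ``$\approx$'' in $f \approx f_1 + f_2$ absorbing the bounded overlap of $\mathcal{F}$), and define $f_1$ as the conditional-expectation-type replacement $f_1 := \sum_{B \in \mathcal{F}} \langle f\rangle_B \chi_B$ on $\bigcup \mathcal{F}$ and $f_1 := f$ on $\big(\bigcup \mathcal{F}\big)^c$. Property~(2) is then immediate, since $\langle |f_{2,B}|\rangle_B = \langle f\rangle_B \lesssim \lambda$. For property~(3), the admissibility of each stopping ball $B_j^*$ permits the $B_1$ estimate $\sigma(B_j^*) \lesssim \big(\inf_{B_j^*}\sigma\big) V(B_j^*)$; combining with $\lambda V(B_j^*) < \int_{B_j^*} f\,dV$ gives $\sigma(B_j^*) \lesssim \lambda^{-1}\int_{B_j^*} f\sigma\,dV$, and summing over the pairwise disjoint $B_j^*$ (and absorbing the doubling-controlled overlap after passing to $\mathcal{F}$) yields $\sum_{B \in \mathcal{F}} \sigma(B) \lesssim \lambda^{-1}\|f\|_{L^1_\sigma(D)}$.

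The main obstacle will be verifying property~(1), $R_k f_1 \lesssim \lambda$. The subtlety is that $R_k$ averages over $B_k(z) = B(z, kd(z,bD))$ whose radius is strictly \emph{less} than $d(z,bD)$, so these balls are inadmissible for $\mathcal{M}$ and the bound $\mathcal{M}f \leq \lambda$ on $E_\lambda^c$ does not directly transfer to $R_k f$ there. My plan is to embed $B_k(z)$ into the $\mathcal{M}$-admissible ball $B(z, 2d(z,bD))$, paying a loss of order $k^{-(n+1)}$ coming from the volume-growth relation~\eqref{Vgrowth}, and then to split the integral of $f_1$ over $B_k(z)$ into its contribution on $E_\lambda^c$ (where $f_1 = f$ and the enlarged-ball average is dominated by $\mathcal{M}f \leq \lambda$ at an auxiliary point of $E_\lambda^c$) and its contribution on each $B \in \mathcal{F}$ meeting $B_k(z)$ (where $f_1 = \langle f\rangle_B \lesssim \lambda$ pointwise by construction). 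Carrying out this geometric accounting rigorously, using the triangle inequality for $d$, the doubling of $V$, and the Whitney-type structure of $\mathcal{F}$, will be the crux of the proof.
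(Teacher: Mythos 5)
Your overall architecture matches the paper's, and your mechanism for property (1) --- enlarge $B_k(z)$ to an $\mathcal{M}$-admissible ball and exploit $\mathcal{M}f\leq\lambda$ at a point of $E_\lambda^c$ --- is essentially the paper's observation that $R_kf_1\lesssim\mathcal{M}f_1$ because $\tfrac{k+1}{k}B_k(z)$ has radius exceeding $d(z,bD)$. The genuine gap is in the construction of $\mathcal{F}$, which you leave as an unspecified ``Whitney-type refinement'' but which is where the real difficulty sits. You need $\mathcal{F}$ to simultaneously (i) consist of admissible balls meeting $E_\lambda^c$ (to force $\langle f\rangle_B\leq\lambda$), (ii) cover all of $E_\lambda$ (so that your splitting for (1) has no third region), (iii) be almost disjoint, and (iv) satisfy (3). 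A Vitali selection gives disjoint stopping balls $B_j^*$ with $\langle f\rangle_{B_j^*}>\lambda$, but the dilates $RB_j^*$ covering $E_\lambda$ are neither almost disjoint nor guaranteed to meet $E_\lambda^c$, and enlarging each until it does meet $E_\lambda^c$ requires an unbounded dilation factor in general, which destroys both the almost-disjointness and the comparison $\sigma(B)\lesssim\sigma(B_j^*)$ you invoke to ``absorb the overlap'' in (3). Note also that the $B_1$ estimate $\sigma(B)\lesssim\lambda^{-1}\int_Bf\sigma\,dV$ requires $\langle f\rangle_B>\lambda$, which holds for the $B_j^*$ but fails by design for your covering balls, whose averages are $\leq\lambda$; so (3) cannot be run directly on $\mathcal{F}$ without a quantitative link back to the disjoint family.

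The paper resolves this tension differently: it takes a genuine Whitney decomposition of the open set $E_\lambda=\{\mathcal{M}f>\lambda\}$ into almost disjoint balls $\mathcal{F}'$ \emph{contained in} $E_\lambda$ with $KB\cap E_\lambda^c\neq\emptyset$ for a fixed $K$, and does \emph{not} insist that these balls be admissible; it keeps only the boundary-adjacent ones ($r(B)\geq\tfrac12 d(B,bD)$) in $\mathcal{F}$ and places $f$ on the remaining ``deep'' Whitney balls into $f_1$. Then (3) follows from almost-disjointness, containment in $E_\lambda$, and Lemma \ref{MaximalWeakType}. The price is that $f_1$ is no longer supported in $\{\mathcal{M}f\leq\lambda\}$, and the crux becomes a geometric lemma your plan has no analogue of: any admissible ball $B_0$ meeting a deep Whitney ball $B$ satisfies $CB_0\supseteq KB$ for a fixed $C$ (since $r(B)<\tfrac12 d(B,bD)$ while $r(B_0)>d(c(B_0),bD)$), whence $\langle f_1\rangle_{B_0}\lesssim\langle f_1\rangle_{CB_0}\leq\lambda$. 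If your construction cannot actually cover $E_\lambda$ by admissible, almost disjoint balls meeting $E_\lambda^c$ --- and the constraints above suggest it cannot --- then $f_1=f$ on an uncovered piece of $E_\lambda$ and your splitting of $\int_{B_k(z)}f_1$ into contributions from $E_\lambda^c$ and $\bigcup\mathcal{F}$ is incomplete. A smaller point: since your $f_2=\sum_Bf\chi_B$ already carries all of $f$ on $\bigcup\mathcal{F}$, adding $\sum_B\langle f\rangle_B\chi_B$ to $f_1$ yields only $f\lesssim f_1+f_2$, not the two-sided $f\approx f_1+f_2$; the paper simply takes $f_1=f$ off $\bigcup_{B\in\mathcal{F}}B$, which makes the comparison immediate.
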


\begin{proof}
Apply a Whitney decomposition to write $$
    \{\mathcal{M}f>\lambda\}=\bigcup_{B\in\mathcal{F}'}B,
$$ 
where $\mathcal{F}'$ is a countable collection of almost disjoint quasi-balls for which there exists $K>1$ such that $KB\cap \{\mathcal{M}f\leq\lambda\}\neq \emptyset$ for all $B \in \mathcal{F}'$. We take
$$
    \mathcal{F}:=\left\{B \in \mathcal{F}': r(B) \ge \frac{1}{2}d(B,bD)\right\}.
$$ 
Put
$$
    f_1:= f\chi_{\{\mathcal{M}f\leq \lambda\}\,\cup \,\bigcup_{B \in \mathcal{F}'\setminus\mathcal{F}}B}
\quad\text{and}\quad
    f_2:= f\chi_{\bigcup_{B \in \mathcal{F}}B}.
$$
Clearly, $f\approx f_1+f_2$.

To show (1), we first claim that $R_kf_1(z) \lesssim \mathcal{M}f_1(z)$ for any $z \in D$. Indeed, since the radius of $\frac{k+1}{k}B_k(z)$ is greater than $d(z,bD)$ and using \eqref{Vgrowth}, we have
$$
    R_kf_1(z)=\langle f_1 \rangle_{B_k(z)}\lesssim \langle |f_1|\rangle_{\frac{k+1}{k}B_k(z)}\leq \mathcal{M}f_1(z).
$$
%$$
%    R_kf_1(z)=\frac{1}{V(B_k(z))}\int_{B_k(z)}f_1\,dV \lesssim \frac{1}{V\left(\frac{k+1}{k}B_k(z)\right)}\int_{\frac{k+1}{k}B_k(z)}|f_1|\,dV \leq \mathcal{M}f_1(z).
%$$
Therefore it is enough to prove $\mathcal{M}f_1\lesssim \lambda$. To this end, fix $z \in D$ and let $B_0$ be a quasi-ball containing $z$ that intersects $bD$. If either $B_0 \cap \{\mathcal{M}f\leq \lambda\}\neq \emptyset$ or if $f\equiv 0$ on $B_0$, then $\langle |f_1|\rangle_{B_0} \leq \lambda$. Otherwise, $B_0 \cap B \neq \emptyset$ for some $B \in \mathcal{F}' \setminus \mathcal{F}$. Notice that $CB_0 \supseteq KB$ with $C=c^3(K+1)+c$, since $d(c(B_0),bD)< r(B_0)$ and $r(B)< \frac{1}{2}d(B,bD)$. Since $KB \cap \{\mathcal{M}f\leq \lambda\}\neq \emptyset$ and using \eqref{Vgrowth}, we have
$$
\langle |f_1|\rangle_{B_0}\lesssim \langle |f_1|\rangle_{CB_0}\leq \lambda.
$$
%$$
%\frac{1}{V(B_0)}\int_{B_0}|f_1|\,dV \lesssim \frac{1}{V(c^3(K+1)+c)B_0)}\int_{(c^3(K+1)+c)B_0}|f_1|\,dV \leq \lambda.
%$$
Therefore (1) holds. 

For (2), note that the properties of the collection $\mathcal{F}$ are satisfied by construction. Take $f_{2,B}:=f\chi_{B}$ for $B \in \mathcal{F}$. Since $KB \cap \{\mathcal{M}f\leq\lambda\}\neq \emptyset$, we have $\langle |f_{2,B}|\rangle_B\lesssim \lambda$. 

Finally, (3) follows from the almost disjointness of the quasi-balls in $\mathcal{F}$ and Lemma \ref{MaximalWeakType}
$$
    \sum_{B\in\mathcal{F}} \sigma(B)\lesssim \sigma\left(\bigcup_{B\in\mathcal{F}}B\right)\leq \sigma(\{\mathcal{M}f>\lambda\})\lesssim \frac{1}{\lambda}\|f\|_{L_{\sigma}^1(D)}.
$$
\end{proof}

\begin{proof}[Proof of Proposition \ref{BergmanAuxiliaryWeakType}]
Since $\mathcal{T}_2$ has a bounded kernel and $\sigma(D)<\infty$ (using the $B_1$ condition), it is immediate that $\mathcal{T}_2$ is bounded on $L_{\sigma}^1(D)$, and hence from $L_{\sigma}^1(D)$ to $L_{\sigma}^{1,\infty}(D)$. It is thus sufficient to prove the estimate for $\mathcal{T}_1$. 

As in \cite{WW2020}, we define a positive comparison operator $\Gamma$ by 
$$
    \Gamma f(z) :=\int_{D}\frac{f(w)}{|g(z,w)|^{n+1}} \, dV(w).
$$
It can easily be shown that
$$
    |\mathcal{T}_1f(z)| \lesssim \Gamma|f|(z),
$$
so it suffices to prove that $\Gamma$ maps $L_{\sigma}^1(D)$ to $L_{\sigma}^{1,\infty}(D)$.

Let $f$ be a nonnegative and continuous function on $D$ and let $\lambda >0$. We will show that $$
    \sigma(\{\Gamma f>\lambda\}) \lesssim \frac{1}{\lambda}\|f\|_{L_{\sigma}^1(D)}.
$$
A density argument and doubling the implied constant in the display above yields the result for general $f \in L_{\sigma}^1(D)$.

Apply Lemma \ref{CZD} to write 
$$
    f\approx f_1+f_2\approx f_1+\sum_{B\in\mathcal{F}}f_{2,B},
$$
where the properties and notations from the lemma hold. Then 
\begin{align*}
    \sigma(\{\Gamma f>\lambda\}) &\leq \sigma\left(\left\{\Gamma f_1>\frac{\lambda}{C}\right\}\right)+\sigma\left(\left\{\Gamma f_2>\frac{\lambda}{C}\right\}\right)\\
    &\leq \sigma\left(\left\{\Gamma f_1>\frac{\lambda}{C}\right\}\right)+\sigma\left(\bigcup_{B\in\mathcal{F}}RB\right)+\sigma\left(\left\{z \in D\setminus\bigcup_{B\in\mathcal{F}}RB: \Gamma f_2(z)>\frac{\lambda}{C}\right\}\right)
\end{align*}
for some $C>0$ and where $R>1$ will be fixed later. Therefore it is enough to bound 
\begin{align*}
    &\text{I}:=\sigma(\{\Gamma f_1 >\lambda\}),\\
    &\text{II}:=\sigma\left(\bigcup_{B\in\mathcal{F}}RB\right),\quad\text{and}\\
    &\text{III}:=\sigma\left(\left\{z \in D\setminus\bigcup_{B\in\mathcal{F}}RB: \Gamma f_2(z)>\lambda\right\}\right)
\end{align*}
by constants multiplied by $\frac{1}{\lambda}\|f\|_{L_{\sigma}^1(D)}$.

To address $\text{I}$, we first claim that there exists $k>0$ such that for all integrable and nonnegative $u$, we have 
$$
    \Gamma u(z) \lesssim \Gamma(R_{k'}u)(z),
$$ 
where $k'=\frac{ck}{1-ck}$. Indeed, by Lemma \ref{gisconstant}, we have $|g(z,w)|\approx |g(z,w')|$ for all $z \in D$ and $w'\in B_k(w)$. Using the above and Lemma \ref{RkRk'}, we deduce
\begin{align*}
    \Gamma u(z)&=\int_D \frac{1}{|g(z,w)|^{n+1}}u(w)\,dV(w)\\
    &\approx \int_D \left(\frac{1}{V(B_k(w))}\int_{B_k(w)}\frac{1}{|g(z,w')|^{n+1}}\,dV(w')\right)u(w)\,dV(w)\\
    &\lesssim \int_D \frac{1}{|g(z,w)|^{n+1}}\left(\frac{1}{V(B_{k'}(w))}\int_{B_{k'}(w)}u(w')\,dV(w')\right)dV(w)\\
    &=\Gamma(R_{k'}u)(z).
\end{align*}
Therefore, using Chebyshev's inequality, the above claim, the $L_{\sigma}^2(D)$ bound of $\Gamma$ (see \cite{WW2020}), property (1) of Lemma \ref{CZD}, Lemma \ref{RkRk'}, and the $B_1$ condition of $\sigma$ we have
\begin{align*}
    \text{I}& \lesssim \frac{1}{\lambda^2}\int_D(\Gamma f_1)^2\sigma\,dV\\
    &\lesssim \frac{1}{\lambda^2}\int_D(\Gamma(R_{k'}f_1))^2\sigma\,dV\\
    &\lesssim \frac{1}{\lambda^2}\int_D (R_{k'}f_1)^2\sigma\,dV\\
    &\lesssim \frac{1}{\lambda}\int_D(R_{k'}f_1)\sigma\,dV\\
    &\lesssim \frac{1}{\lambda}\int_D f_1(R_{k''}\sigma)\,dV\\
    &\lesssim\frac{1}{\lambda}\int_Df_1\sigma\,dV\\
    &\leq \frac{1}{\lambda}\|f\|_{L_{\sigma}^1(D)}.
\end{align*}

The control of $\text{II}$ follows from the doubling property of $\sigma$ and property (3) of Lemma \ref{CZD}:
$$
    \text{II}\leq \sum_{B\in\mathcal{F}}\sigma(RB)\lesssim \sum_{B\in\mathcal{F}}\sigma(B)\lesssim \frac{1}{\lambda}\|f\|_{L_{\sigma}^1(D)}.
$$

For $\text{III}$, we claim that if $R>1$ is sufficiently large and $u$ is supported on a quasi-ball $B$, then
$$
    \Gamma u(z) \lesssim \Gamma\left(\langle u \rangle_B\chi_B\right)(z)
$$
for all $z \in D \setminus RB$. Indeed, as stated in the proof of Lemma \ref{gisconstant}, we have $|g(z,w)|\approx |g(z,w')|$ whenever $d(w,w')\leq Cd(z,w)$. For $z \in D\setminus RB$ and $w,w' \in B$, we use the triangle inequality to obtain $d(w,w')< 2cr(B)$ and $\frac{R-c}{c}r(B)< d(z,w)$. Thus, if $R$ is chosen large enough so that $2c\leq C\frac{R-c}{c}$, we have $d(w,w')\leq Cd(z,w)$, and hence $|g(z,w)|\approx |g(z,w')|$. The claim follows via using Fubini's theorem
\begin{align*}
    \Gamma u(z)&= \int_B \frac{1}{|g(z,w)|^{n+1}}u(w)\,dV(w)\\
    &\approx \int_B \left(\frac{1}{V(B)}\int_B\frac{1}{|g(z,w')|^{n+1}}\,dV(w')\right)u(w)\,dV(w)\\
    &=\int_B\frac{1}{|g(z,w')|^{n+1}}\left(\frac{1}{V(B)}\int_Bu(w)\,dV(w)\right)\,dV(w')\\
    &=\Gamma\left(\langle u \rangle_B \chi_B\right)(z).
\end{align*}

Using the above claim, we have
$$
    \Gamma f_2(z)\approx\sum_{B \in \mathcal{F}} \Gamma f_{2,B}(z) \lesssim \sum_{B\in\mathcal{F}} \Gamma\left(\langle f_{2,B}\rangle_B\chi_B\right)(z)=\Gamma\tilde{f}_2(z)
$$
for $z \in D\setminus \bigcup_{B\in\mathcal{F}}RB$, where $\tilde{f}_2:=\sum_{B\in\mathcal{F}}\langle f_{2,B}\rangle_B\chi_B$.
Therefore, to control \text{III}, it suffices to prove 
$$
    \sigma(\{ \Gamma\tilde{f}_2>\lambda\}) \lesssim \frac{1}{\lambda}\|f\|_{L_{\sigma}^1(D)}.
$$
To accomplish this, apply Chebyshev's inequality, the bound of $\Gamma$ on $L_{\sigma}^2(D)$, property (2) of Lemma \ref{CZD}, the $B_1$ condition of $\sigma$, and the almost disjointness of the quasi-balls in $\mathcal{F}$
\begin{align*}
    \sigma(\{\Gamma\tilde{f}_2>\lambda\})&\lesssim \frac{1}{\lambda^2}\int_{D}(\Gamma\tilde{f}_2)^2\sigma\,dV\\
    &\lesssim \frac{1}{\lambda^2}\int_{D}\tilde{f}_2^2\sigma\,dV\\
    &\lesssim \frac{1}{\lambda}\sum_{B\in\mathcal{F}}\int_B\langle f\rangle_B\sigma\,dV\\
    &\lesssim \frac{1}{\lambda}\sum_{B\in\mathcal{F}}\int_B f\sigma\,dV\\
    &\lesssim \frac{1}{\lambda}\|f\|_{L_{\sigma}^1(D)}.
\end{align*}

%As in \cite{WW2020}, we introduce an auxiliary operator $\Gamma$ that majorizes $\mathcal T_1.$ In particular, we define
%$$
%    \Gamma f(z) :=\int_{D}\frac{f(w)}{|g(w,z)|^{n+1}} \, dV(w).
%$$
%It can easily be shown (see \cite{LS2012} or \cite{WW2020}) that
%$$
%    |\mathcal{T}_1f(z)| \leq \Gamma|f|(z),
%$$
%so it suffices to prove that the operator $\Gamma$ is weak type $(1,1).$  

%The following size and smoothness estimates with respect to the quasi-metric $d$ were proven in \cite{WW2020}: There exist constants $C_1$, $C_2$, and $C_3$ so that the following hold: \begin{enumerate} 
%\item $\left| \frac{1}{g(w,z)^{n+1}}\right| \leq \frac{C_1}{V(B(z,d(z,w)))}.$
%\item If $d(z,w) \geq C_2 d(z,z')$, then 
%$\left| \frac{1}{g(w,z)^{n+1}}-\frac{1}{g(w,z')^{n+1}}\right| \leq C_3 \left(\frac{d(z,z')}{d(z,w)}\right) ^{1/2}V(B(z,d(z,w)))^{-1}.$
%\item If $d(z,w) \geq C_2 d(w,w')$, then 
%$\left| \frac{1}{g(w,z)^{n+1}}-\frac{1}{g(w',z)^{n+1}}\right| \leq C_3 \left(\frac{d(w,w')}{d(z,w)}\right) ^{1/2}V(B(w,d(z,w)))^{-1}.$
%\end{enumerate}

%Moreover, we note that the operator $\Gamma$ is bounded on $L^2(D)$. This fact is proven directly in \cite{LS2012}, or can also be deduced from weighted estimates in \cite{WW2020} by taking the weight $\sigma=1.$ These estimates indicate that $\Gamma$ is a Calder\'{o}n-Zygmund operator in the sense of spaces of homogeneous type. Thus, standard theory implies that $\Gamma$, and hence $\mathcal T_1$ is weak type $(1,1).$ \textcolor{red}{Should we add more? Probably.}
\end{proof}

%%%%%%%%%%%%%%%%%%%%%%%%%%%%%%%%%%%%%%5

\section{The Cauchy-Szeg\H o Projection}\label{CauchySzegoSection}
Throughout this section, we assume that the domain $D$ has class $C^3$ boundary. This means the same thing as in Section \ref{BergmanSection} except the defining function $\rho$ is only assumed to be of class $C^3.$ We construct an auxiliary operator, $\mathcal{C}$, that produces and reproduces boundary values of holomorphic functions. This construction proceeds in a similar way to the construction in Section \ref{BergmanSection}, and we will reuse certain notations to refer to objects playing analogous roles. For more details concerning the construction of this operator, we refer the reader to \cite{LS2017}.

Let $P_w(z)$ denote the Levi polynomial, but this time at $w \in bD$. In this case, we set 
$$
    g(z,w)= -\chi P_w(z)+(1-\chi)|z-w|^2,
$$
where $\chi=\chi(z,w)$ is an appropriately chosen $C^\infty$ cutoff function with  $\chi \equiv 1$ when $|z-w|<\delta/2$ and $\chi \equiv 0$ when $|z-w| >\delta$ for some $\delta>0$ so that
$$
    \text{Re} \,g(z,w) \gtrsim -\rho(z)+|z-w|^2.
$$
The $(1,0)$ form in $w$, $G(z,w)$, is defined exactly the same way as in the Bergman case. This time we have  
$$
    \langle G(z,w), w-z\rangle = g(z,w).
$$
As before, we set 
$$
    \eta(z,w):= \frac{G(z,w)}{g(z,w)}
$$
and define similarly for $z \in D$
$$
    \mathbf{C}_1 f(z):=\frac{1}{(2 \pi \mathrm{i})^n}\int_{bD} j^*\left((\overline{\partial}_w\eta)^{n-1} \wedge \eta\right) f(w)
$$
where $j: bD \rightarrow \mathbb{C}^n$ denotes the inclusion map. Notice now the integration takes place over the boundary rather than the interior of the domain.  

We have the following lemma, which grants that $\mathbf{C}_1$ reproduces holomorphic functions from their boundary values. 
\begin{lemma}\label{reproduce 2}
If $F$ is holomorphic on $D$, continuous on $\overline{D}$, and $f=F|_{bD}$, then for all $z \in D$,
$$
    \mathbf{C}_1f(z)=F(z).
$$
\end{lemma}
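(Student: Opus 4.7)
The plan is to exploit the fact that, by construction, $\eta(z,w) = G(z,w)/g(z,w)$ is a Cauchy--Fantappi\`e generating form. Indeed, the identity $\langle G(z,w), w-z\rangle = g(z,w)$ immediately yields $\langle \eta(z,w), w-z\rangle = 1$ whenever $g(z,w) \neq 0$, and the real-part estimate $\mathrm{Re}\,g(z,w)\gtrsim -\rho(z)+|z-w|^2$ guarantees $g(z,w)\neq 0$ for every $z\in D$ and $w\in bD$ (since $\rho(z)<0$). Thus $\eta(z,\cdot)$ is smooth on $bD$, the integral defining $\mathbf{C}_1 f(z)$ is absolutely convergent, and the usual Cauchy--Fantappi\`e machinery applies.

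The main step is then to compare $\mathbf{C}_1$ with the Bochner--Martinelli operator, whose kernel is built from the standard generating form
$$
\eta^{\mathrm{BM}}(z,w) = \frac{1}{|w-z|^2}\sum_{j=1}^n (\overline{w}_j - \overline{z}_j)\,dw_j,
$$
which likewise satisfies $\langle \eta^{\mathrm{BM}}, w-z\rangle = 1$. The classical Bochner--Martinelli--Koppelman formula asserts that for $F$ holomorphic on $D$ and continuous on $\overline D$, the boundary integral against this kernel reproduces $F(z)$. To interpolate between the two kernels, I would consider the linear homotopy $\eta^t := t\eta + (1-t)\eta^{\mathrm{BM}}$ for $t \in [0,1]$, which remains a generating form for every $t$, and show by Stokes' theorem on $D \setminus \overline{B_\epsilon(z)}$ (using the holomorphy of $F$ and the fact that $\bar\partial_w F = 0$) that
$$
\int_{bD} F(w)\, j^*\!\bigl((\bar\partial_w \eta^t)^{n-1}\wedge \eta^t\bigr)
$$
is independent of $t$. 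The algebraic fact underlying this independence is that the $t$-derivative of $(\bar\partial_w \eta^t)^{n-1}\wedge \eta^t$ is $\bar\partial_w$-exact, which follows from the shared normalization $\langle \eta^t, w-z\rangle = 1$ and a standard identity for wedge products of $(1,0)$ generating forms.

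The main obstacle is the singular behavior at $w = z$: both $\eta$ and $\eta^{\mathrm{BM}}$ blow up along the diagonal, so Stokes must be applied after excising a small Euclidean ball $B_\epsilon(z)$, and one must verify that the contribution on $\partial B_\epsilon(z)$ has the correct limit as $\epsilon\to 0^+$. Here one uses that, because every $\eta^t$ satisfies the same normalization $\langle \eta^t, w-z\rangle = 1$, the leading singularity of the kernels at $w = z$ coincides, so the $\partial B_\epsilon(z)$-contribution tends to the same value $F(z)$ for every $t$ by the usual residue-type computation. The $C^3$ assumption on $\rho$ is exactly enough for $\eta$ (which involves second derivatives of $\rho$) to be $C^1$ in $w$ off the diagonal, ensuring Stokes applies on $D \setminus \overline{B_\epsilon(z)}$. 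Putting the pieces together, the homotopy argument identifies $\mathbf{C}_1 f(z)$ with the Bochner--Martinelli integral, which equals $F(z)$, as required; this parallels the treatment in \cite{LS2017}.
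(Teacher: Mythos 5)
The paper itself does not prove this lemma; it is imported from \cite{LS2017} (see also Range's book), where the proof is precisely the Cauchy--Fantappi\`e argument you outline. Your ingredients are the right ones: the normalization $\langle \eta(z,w),w-z\rangle=1$, the non-vanishing of $g(z,w)$ for $z\in D$ and $w\in bD$ coming from $\operatorname{Re}g(z,w)\gtrsim -\rho(z)+|z-w|^2$, comparison with the Bochner--Martinelli generating form via the convex homotopy $\eta^t$, and the classical Bochner--Martinelli reproducing formula. Note, though, that for fixed $z\in D$ the integral defining $\mathbf{C}_1f(z)$ has no singularity at all, since $|w-z|\geq \operatorname{dist}(z,bD)>0$ for $w\in bD$.

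The genuine problem is where you apply Stokes in the homotopy step. You propose to prove $t$-independence by Stokes on $D\setminus\overline{B_\epsilon(z)}$, which requires $\eta^t$ --- hence $\eta$ --- to be a generating form for $w$ ranging over the \emph{interior} of $D$. It is not: $g(z,w)$ is built from the Levi polynomial at $w\in bD$, and the lower bound $\operatorname{Re}g(z,w)\gtrsim-\rho(z)+|z-w|^2$ is only available for $w\in bD$; for interior $w$ the Levi-polynomial estimate picks up a $\rho(w)<0$ term which can dominate $|z-w|^2$, so $g(z,w)$ may vanish and $\eta^t$ (for $t>0$) is not defined there. In particular the assertion that the $\partial B_\epsilon(z)$-contribution tends to $F(z)$ for every $t$ is unjustified, since $\eta^t$ need not exist near $w=z$. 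The repair is standard and is already contained in the algebraic fact you quote: $\frac{d}{dt}\bigl[(\overline{\partial}_w\eta^t)^{n-1}\wedge\eta^t\bigr]=\overline{\partial}_w\beta_t$ with $\beta_t$ of bidegree $(n,n-2)$, hence $d_w\beta_t=\overline{\partial}_w\beta_t$ because $\partial_w\beta_t$ has bidegree $(n+1,n-2)$ and vanishes identically on $\mathbb{C}^n$. Applying Stokes on the closed manifold $bD$ (no excision needed) gives $\int_{bD}F\,\overline{\partial}_w\beta_t=-\int_{bD}\partial F\wedge\beta_t=0$, again by bidegree, since $F$ is holomorphic. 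The excision of $B_\epsilon(z)$ and the residue computation are needed only at the endpoint $t=0$, where the Bochner--Martinelli kernel is globally defined off the diagonal and the solid-region Stokes argument is legitimate.
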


The problem again is that $\mathbf{C}_1$ does not generally produce holomorphic functions. This is corrected with the following lemma.
\begin{lemma}\label{produce and reproduce}
There is a continuous $(n,n-1)$ form $C_2(z,w)$ in $w$ that depends smoothly on the parameter $z \in \overline{D}$ so that if we define
$$
    \mathbf{C}_2f(z):= \int_{bD}C_2(z,w) f(w)
$$ 
and 
$$
    \mathbf{C}:=\mathbf{C}_1+\mathbf{C}_2,
$$ 
then the operator $\mathbf{C}$ satisfies the following:
\begin{enumerate}
    \item If $f \in L^1(bD)$, then $\mathbf{C}f$ is holomorphic on $D$. 
    \item If $F$ is holomorphic on $D$, continuous on $\overline{D}$, and $f=F|_{bD}$, then $\mathbf{C}f(z)=F(z)$ for all $z \in D$.
\end{enumerate}
\end{lemma}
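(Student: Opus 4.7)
The plan is to construct $\mathbf{C}_2$ via a $\bar\partial$-correction, mirroring the argument used to establish Lemma \ref{correction operator} in the Bergman case (see \cite{LS2017,R1986}). The idea is that $\mathbf{C}_1 f$ fails to be holomorphic only in a mild way, and its failure can be measured by $\bar\partial_z \mathbf{C}_1 f(z)$; this can then be canceled by inverting $\bar\partial$ on a larger auxiliary domain.

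First I would differentiate under the integral sign to write
$$
\bar\partial_z \mathbf{C}_1 f(z) = \int_{bD} L(z,w)\, f(w),
$$
where $L(z,w)$ is a $(0,1)$-form in $z$. The specific choice $\eta(z,w) = G(z,w)/g(z,w)$ is designed precisely so that the most singular terms cancel when $\bar\partial_z$ is applied (consistent with the reproducing property in Lemma \ref{reproduce 2}); an explicit computation in the spirit of \cite{LS2017} shows that the residual kernel $L(z,w)$ is strictly less singular than the kernel of $\mathbf{C}_1$, and in fact extends continuously from $D \times bD$ to $\overline{D} \times bD$, with the $C^3$ regularity of $\rho$ being exactly what is needed to control the derivatives of the Levi polynomial and the cutoff.

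Next I would embed $\overline D$ in a strictly larger smoothly bounded strongly pseudoconvex domain $\Omega$, which is always possible via a small smoothing perturbation of $\rho$. On $\Omega$, the Henkin--Ram\'irez integral operator $H$ supplies an explicit solution to $\bar\partial u = \alpha$ for every $\bar\partial$-closed $(0,1)$-form $\alpha$ on $\Omega$, with a continuous kernel and H\"older estimates up to $\overline\Omega$. Applying $H$ slicewise to $L(\cdot, w)$ gives a function $K_2(z,w)$ continuous in $(z,w) \in \overline D \times bD$ and smooth in $z$ with $\bar\partial_z K_2(\cdot,w) = -L(\cdot,w)$. Setting
$$
\mathbf{C}_2 f(z) := -\int_{bD} K_2(z,w)\, f(w)\, dS(w)
$$
(reinterpreted as integration of a continuous $(n,n-1)$-form on $bD$ against $f$ to match the formulation in the statement), Fubini's theorem gives $\bar\partial_z(\mathbf{C}_1 + \mathbf{C}_2)f \equiv 0$ on $D$, which is property (1). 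For property (2), if $f = F|_{bD}$ with $F$ holomorphic on $D$ and continuous on $\overline D$, then Lemma \ref{reproduce 2} yields $\mathbf{C}_1 f = F$, so $\bar\partial_z \mathbf{C}_1 f \equiv 0$; by linearity of $H$, the function $z \mapsto -\int_{bD} K_2(z,w) f(w)\, dS(w)$ equals $-H[\bar\partial_z \mathbf{C}_1 f] \equiv 0$, so $\mathbf{C}_2 f \equiv 0$ and $\mathbf{C} f = F$ on $D$.

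The main obstacle I anticipate is the first step: verifying that $\bar\partial_z \mathbf{C}_1 f$ has a kernel $L(z,w)$ regular up to the boundary diagonal. This requires a careful computation with the explicit Cauchy-Fantappi\'e form, tracking the cancellations between $\bar\partial_z G$, $\bar\partial_z (1/g)$, and derivatives of the cutoff $\chi$, and confirming that the resulting singularity is weaker than $|g(z,w)|^{-n}$ by a full factor---tight precisely because $\rho$ is only $C^3$. Once this regularity is in hand, the $\bar\partial$-solving step is standard and properties (1) and (2) follow directly from the construction.
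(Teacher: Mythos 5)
The paper itself offers no proof of this lemma---it is quoted from \cite{LS2017}---so the comparison is really with the construction there. Your overall architecture does match it: the key observation that $\bar\partial_z$ of the Cauchy-Fantappi\'e kernel is supported off the diagonal (near the diagonal $g=-P_w$ and $G$ are holomorphic polynomials in $z$, so $\bar\partial_z$ kills the kernel outright rather than merely reducing its singularity), the extension of the resulting $\bar\partial_z$-closed $(0,1)$-form to a smooth strongly pseudoconvex $\Omega\supset\supset\overline D$, and the correction by a $\bar\partial$-solution operator with uniform H\"older estimates. Your argument for property (1) is essentially complete.

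Your deduction of property (2), however, has a genuine gap. You conclude $\mathbf{C}_2f=-H[\bar\partial_z\mathbf{C}_1f]\equiv 0$ from $\bar\partial_z\mathbf{C}_1f\equiv 0$. But $H$ is a nonlocal integral operator over all of $\Omega$: setting $\alpha(z):=\int_{bD}L(z,w)f(w)\,dS(w)$ for $z\in\Omega$, you only know $\alpha=0$ on $D$, where $\mathbf{C}_1f=F$ is holomorphic. On $\Omega\setminus\overline D$ the form $\alpha$ is built from the truncated kernel $L$ (supported in $|z-w|>\delta/2$) and has nothing to do with $F$; indeed $F$ need not extend past $\overline D$, and the full Cauchy-Fantappi\'e kernel is not even defined for such $z$, since $\operatorname{Re}\bigl(-P_w(z)\bigr)\gtrsim-\rho(z)+|z-w|^2$ can be negative when $z\notin\overline D$ and $w\in bD$ is nearby. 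So $H[\alpha]\restriction_D$ is merely some holomorphic function, and you obtain $\mathbf{C}f=F+(\text{holomorphic error})$, not $\mathbf{C}f=F$. To close this you must either (i) solve the $\bar\partial$-problem on $D$ itself, with a solution operator whose kernel has estimates up to the (only $C^3$) boundary, so that the datum it sees is exactly $\bar\partial(\mathbf{C}_1f)\restriction_D\equiv 0$; or (ii) prove $\int_{bD}C_2(z,\cdot)F=0$ directly, which is the classical route: one arranges the correction so that, as an $(n,n-1)$-form on the closed manifold $bD$, it is $\bar\partial_w$-exact modulo terms annihilated by Stokes' theorem when paired with boundary values of holomorphic functions (the Cauchy-Fantappi\'e transition identity between two generating forms). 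Either way, property (2) needs more than linearity of $H$.
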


We define the operator $\mathcal{C}f:=\mathbf{C}f|_{bD}$ for a class of functions satisfying a certain type of H\"{o}lder continuity and refer to \cite{LS2017} for the details. It was proved in \cite{LS2017} that the operator $\mathcal{C}$ extends boundedly on $L^p(bD)$ for all $1<p<\infty.$ The Kerzman-Stein equation now takes the following form:
$$
    \mathcal{S}(I-(\mathcal{C}^*-\mathcal{C}))=\mathcal{C}.
$$

To prove that $\mathcal{S}$ is weak-type $(1,1)$ with respect to the $A_1$ weight $\sigma$, we proceed in two steps as before. In particular, we have the following two propositions:

\begin{prop}\label{SzegoAuxiliaryInvertible}
If $\sigma$ is an  $A_1$ weight, then the operator $I-(\mathcal{C}^*-\mathcal{C})$ is invertible on $L^1_{\sigma}(bD)$.
\end{prop}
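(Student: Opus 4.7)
The plan is to mirror the argument used in the proof of Proposition \ref{BergmanInversion}: establish that $\mathcal{C}^*-\mathcal{C}$ is compact on $L^1_\sigma(bD)$, verify that $1$ is not an eigenvalue, and then invoke the spectral theorem for compact operators on a Banach space. The entire argument is a faithful translation of the Bergman case to the boundary setting, with $(D,V)$ replaced by $(bD,S)$, $B_1$ replaced by $A_1$, and the volume growth $V(B(z,r))\approx r^{n+1}$ replaced by the surface growth $S(B(z,r))\approx r^n$.

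For compactness, I would apply Lemma \ref{compact operator} and Lemma \ref{relatively compact} on the space $(bD,d,\sigma\,dS)$. The measure $\sigma\,dS$ is a finite Borel measure using $\sigma\in A_1$, and the requisite lower bound $\inf_{z\in bD}\sigma(B(z,r))>0$ follows by a compactness argument together with $S(B(z,r))\approx r^n$. The crucial input is a pointwise kernel estimate $|k(z,w)|\lesssim d(z,w)^{-(n-\frac{1}{2})}$, where $k$ denotes the kernel of $\mathcal{C}^*-\mathcal{C}$ with respect to $dS$; this is the Cauchy--Szeg\H o analogue of the estimate from \cite[Lemma 3.14]{WW2020}, with the $\frac{1}{2}$-order improvement beyond the critical size $d(z,w)^{-n}$ of the Cauchy--Szeg\H o kernel coming from the near-symmetry of $\mathcal{C}$ under the $C^3$ hypothesis. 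Integrating this over dyadic shells and using the $A_1$ bound $\sigma(B(w,r))\lesssim \sigma(w)\,r^n$ yields the uniform integrability $\sigma(w)^{-1}\int_{bD}|k(z,w)|\sigma(z)\,dS(z)\lesssim 1$ required by Lemma \ref{compact operator}. For the relative compactness hypothesis, I would prove
$$
\lim_{r\to 0^+}\sup_{w\in bD}\sigma(w)^{-1}\int_{bD}|k_w(z)-\langle k_w\rangle_{B(z,r),\sigma dS}|\sigma(z)\,dS(z)=0
$$
by splitting $bD$ into $G_\delta:=\{z:d(z,w)\geq\delta\}$ and $B(w,\delta)$: on $G_\delta$, uniform continuity of $k(z,w)$ off the diagonal and the $A_1$ estimate $\sigma(bD)\lesssim\sigma(w)$ make the contribution $O(\varepsilon)$ for $r$ small relative to $\delta$; on $B(w,\delta)$, the size estimate on $k_w$ together with $A_1$ doubling controls both $\int|k_w|\sigma\,dS$ and $\int|\langle k_w\rangle_{B(z,r),\sigma dS}|\sigma\,dS$ by a constant multiple of $\sigma(w)\delta^{1/2}$, via the same subcase decomposition (based on whether $d(z,w)\gg r$ or $d(z,w)\lesssim r$) employed in the proof of Lemma \ref{BergmanLCompactness}. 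Note that no boundary-distance subcases are needed here since $bD$ is itself the boundary.

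To rule out $1$ as an eigenvalue, I would follow Lemma \ref{Bergmanspectrum}: if $f\in L^1_\sigma(bD)$ satisfies $(\mathcal{C}^*-\mathcal{C})f=f$, then since $\sigma\in A_1$ is bounded below, $f\in L^1(bD)$; a self-improving $L^p(bD)\to L^{p+\varepsilon}(bD)$ mapping property for $\mathcal{C}^*-\mathcal{C}$, which is the boundary analogue of \cite[Proposition 3.16]{WW2020} and follows from the kernel estimate $|k(z,w)|\lesssim d(z,w)^{-(n-1/2)}$ by a Schur-type argument, bootstraps $f\in L^p(bD)$ for every $p\geq 1$; in particular $f\in L^2(bD)$, contradicting that $\mathcal{C}^*-\mathcal{C}$ is skew-symmetric on $L^2(bD)$ and hence has purely imaginary spectrum there. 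The main obstacle is the kernel estimate $|k(z,w)|\lesssim d(z,w)^{-(n-1/2)}$ and the accompanying self-improving $L^p$-bound: these are the $C^3$-regularity analogues of the inputs quoted from \cite{WW2020} in the Bergman case, and they will presumably be available in the setting of \cite{LS2017,WW2020}. Once these inputs are in hand, Proposition \ref{SzegoAuxiliaryInvertible} follows immediately from the spectral theorem applied to the compact operator $\mathcal{C}^*-\mathcal{C}$ on $L^1_\sigma(bD)$.
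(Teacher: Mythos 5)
Your proposal is correct and follows essentially the same route as the paper: compactness of $\mathcal{C}^*-\mathcal{C}$ on $L^1_\sigma(bD)$ via Lemmas \ref{compact operator} and \ref{relatively compact} together with the off-diagonal kernel gain and a dyadic-shell integration, exclusion of the eigenvalue $1$ by the $L^p\to L^{p+\varepsilon}$ bootstrap down to the skew-adjoint $L^2$ case, and then the spectral theorem for compact operators. The only cosmetic difference is normalization: the paper takes $d(z,w)=|g(z,w)|^{1/2}$, so its exponents read $S(B(z,r))\approx r^{2n}$ and $|k(z,w)|\lesssim d(z,w)^{-2n+1}$, which agree with your $r^{n}$ and $d^{-(n-1/2)}$ after replacing $d$ by $d^{2}$.
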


\begin{prop}\label{SzegoAuxiliaryWeakType}
If $\sigma$ is an $A_1$ weight, then $\mathcal{C}$ maps $L^1_\sigma(bD)$ to $L^{1,\infty}_{\sigma}(bD)$ boundedly.
\end{prop}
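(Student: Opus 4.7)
The plan is to mirror the proof of Proposition \ref{BergmanAuxiliaryWeakType}, exploiting the important simplification that $(bD,d,S)$ is a genuine space of homogeneous type and the $A_1$ condition yields global doubling of $\sigma$ with respect to every quasi-ball on $bD$. This eliminates the need for the boundary-adapted averaging operators $R_k$ used in the Bergman case and permits a direct use of the classical weighted Calder\'on-Zygmund decomposition. I first dispose of $\mathcal{C}_2$: its kernel is continuous on $\overline{D}\times\overline{D}$, and since $\sigma(bD)<\infty$ (from the $A_1$ condition together with compactness of $bD$), $\mathcal{C}_2$ maps $L^1_\sigma(bD)$ boundedly into $L^\infty(bD)\hookrightarrow L^{1,\infty}_\sigma(bD)$. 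It therefore suffices to treat $\mathcal{C}_1$.

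Following \cite{LS2017,WW2020}, I would dominate $|\mathcal{C}_1 f(z)|\lesssim \Gamma|f|(z)$ pointwise on $bD$ by the positive comparison operator
$$
    \Gamma f(z):=\int_{bD}\frac{f(w)}{|g(z,w)|^{n}}\,dS(w),
$$
using the standard size estimates on the coefficients of $j^*((\overline\partial_w\eta)^{n-1}\wedge\eta)$ combined with the surface-measure growth $S(B(z,r))\approx r^{n}$. It then suffices to prove that $\Gamma$ is weak-type $(1,1)$ on $L^1_\sigma(bD)$. To this end, I would apply the standard Calder\'on-Zygmund decomposition on $(bD,d,S)$ at height $\lambda>0$: write $f\approx f_1+\sum_{B\in\mathcal{F}}f_{2,B}$ with $|f_1|\lesssim \lambda$ almost everywhere, $\mathrm{supp}\,f_{2,B}\subseteq B$, $\langle |f_{2,B}|\rangle_B\lesssim\lambda$, and $\sum_{B\in\mathcal{F}}\sigma(B)\lesssim \lambda^{-1}\|f\|_{L^1_\sigma(bD)}$; the last bound is a consequence of the weak-type $(1,1)$ bound of the Hardy-Littlewood maximal operator on $L^1_\sigma(bD)$, which follows from the $A_1$ condition via a standard Vitali covering argument. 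The level set $\{\Gamma f>\lambda\}$ then splits into three contributions: the good part, handled by Chebyshev, the $L^2_\sigma(bD)$ boundedness of $\Gamma$ established in \cite{WW2020}, and the bound $|f_1|\lesssim \lambda$; the union $\bigcup_{B\in\mathcal{F}}RB$, handled by $A_1$ doubling together with the bound on $\sum_B\sigma(B)$; and the bad part outside $\bigcup_{B\in\mathcal{F}}RB$, handled by the key pointwise estimate $\Gamma f_{2,B}(z)\lesssim \Gamma(\langle f_{2,B}\rangle_B\chi_B)(z)$ for $z\notin RB$, followed by Chebyshev, the $L^2_\sigma$ bound of $\Gamma$, and the almost disjointness of $\mathcal{F}$.

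The main obstacle will be the boundary analog of Lemma \ref{gisconstant}, namely $|g(z,w)|\approx|g(z,w')|$ for $w,w'\in B$ and $z\notin RB$ with $R$ sufficiently large. Under the weaker $C^3$ regularity hypothesis this requires some care, but it should follow from the H\"older-type control on the function $g$ established in \cite{LS2017}: the Levi polynomial $P_w$ and its Hessian corrections remain well-defined and possess the needed continuity properties under only $C^3$ smoothness of $\rho$, which is exactly the assumption here. With this ingredient secured, the proof is a direct and somewhat cleaner adaptation of the argument given in Section \ref{BergmanSection} for the Bergman case.
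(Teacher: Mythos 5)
Your strategy of transplanting the Bergman argument to the boundary breaks down at its first substantive step: the positive comparison operator. On $bD$ the quasi-metric is $d(z,w)=|g(z,w)|^{1/2}$ and $S(B(z,r))\approx r^{2n}$ (not $r^{n}$ as you write), so the kernel of your proposed majorant satisfies $|g(z,w)|^{-n}=d(z,w)^{-2n}\approx S(B(z,d(z,w)))^{-1}$. This is exactly the critical, non-integrable power: $\int_{bD}|g(z,w)|^{-n}\,dS(w)=\infty$ for every $z\in bD$, because there is no interior gain from $-\rho(z)-\rho(w)$ as there is in the Bergman case (where the kernel $|g|^{-(n+1)}$ is integrable for each fixed interior point). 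Consequently your $\Gamma$ is unbounded on every $L^p(bD)$, and in particular the $L^2_\sigma(bD)$ bound you propose to import from \cite{WW2020} does not exist and is not proved there. The pointwise domination $|\mathcal{C}_1 f|\lesssim\Gamma|f|$ discards precisely the cancellation that makes the Cauchy--Szeg\H o kernel bounded; this is the basic structural difference between the two projections, and it is why the Bergman-style scheme (good part via Chebyshev and the $L^2_\sigma$ bound of the majorant; bad part via replacing $f_{2,B}$ by $\langle f_{2,B}\rangle_B\chi_B$) cannot run here. Note also that replacing $f_{2,B}$ by its average, rather than subtracting the average to produce mean-zero pieces, never brings the kernel regularity in the $w$-variable into play, which is the only mechanism available on the boundary for controlling the bad part off the dilated balls.

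The paper's proof is forced by this obstruction onto a different and much shorter route: following \cite{LS2017}, write $\mathcal{C}=\mathcal{C}^\sharp+\mathcal{R}$, where $\mathcal{C}^\sharp$ is a genuine Calder\'on--Zygmund operator on the space of homogeneous type $(bD,d,S)$ ($L^2$-bounded, with kernel size \emph{and} smoothness estimates), so the standard weighted Calder\'on--Zygmund theory gives the weak-type $(1,1)$ bound with $A_1$ weights; the remainder $\mathcal{R}$ has a kernel satisfying the Schur-type condition $\int_{bD}|r(z,w)|\sigma(z)\,dS(z)\lesssim\sigma(w)$, hence is bounded on $L^1_\sigma(bD)$ by Fubini. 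If you want a hands-on decomposition in the spirit of Section \ref{BergmanSection}, you must apply it to $\mathcal{C}^\sharp$ itself, subtract means from the bad pieces, and use the H\"ormander condition for its kernel --- not a positive majorant. (Your treatment of the correction term is also slightly off in spirit: the paper splits off $\mathcal{R}$ from the Lanzani--Stein decomposition rather than $\mathbf{C}_2$, though handling a bounded-kernel piece as you do would be fine on its own.)
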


We can now prove Theorem \ref{CauchySzegoWeakType}.
\begin{proof}[Proof of Theorem \ref{CauchySzegoWeakType}]
This follows directly from Proposition \ref{SzegoAuxiliaryInvertible} and Proposition \ref{SzegoAuxiliaryWeakType}.
\end{proof}

The proof of Proposition \ref{SzegoAuxiliaryInvertible} proceeds as in the Bergman case. We again appeal to Lemma \ref{compact operator} and Lemma \ref{relatively compact} to prove a compactness result. In this case, the underlying space is $X=bD$ and the finite Borel measure is $\sigma \,dS$. We can define the appropriate quasi-metric:
$$
    d(z,w):=|g(z,w)|^{\frac{1}{2}}.
$$
It was proved in \cite{LS2017} that $d$ is indeed a quasi-metric and that $(D,d,S)$ is a space of homogeneous type. Additionally, we have $S(B(z,r)) \approx r^{2n}.$

\begin{lemma} \label{CompactnessSzego}
If $\sigma$ is an $A_1$ weight, then the operator $\mathcal{C}^*-\mathcal{C}$ is compact on $L^1_{\sigma}(bD)$.
\end{lemma}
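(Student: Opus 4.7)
The plan is to mirror the proof of Lemma \ref{BergmanLCompactness}, applying Lemma \ref{compact operator} on $X = bD$ with the finite measure $\mu = \sigma\,dS$. The setting is in fact cleaner than the Bergman case because $bD$ is compact and there is no boundary diagonal to avoid: the kernel of $\mathcal{C}^*-\mathcal{C}$ is singular only along the honest diagonal $\{(z,z):z\in bD\}$, and the $A_1$ condition applies to every quasi-ball (not merely those with radius exceeding $d(\cdot,bD)$). First I would verify the measure-theoretic hypotheses of Lemma \ref{compact operator}: $\sigma(bD)<\infty$ follows from $A_1$ applied to the single quasi-ball $B(z_0,R)=bD$ (for $R$ the $d$-diameter of $bD$), and the condition $\inf_{z\in bD}\sigma(B(z,r))>0$ for each fixed $r>0$ follows from compactness of $bD$ together with positivity of $\sigma$.

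Let $k(z,w)$ denote the kernel of $\mathcal{C}^*-\mathcal{C}$ with respect to $dS$ and set $\tilde{k}(z,w):=k(z,w)\sigma(w)^{-1}$, which is its kernel with respect to $\sigma\,dS$. The key analytic input is a cancellation estimate on $bD$ analogous to \cite[Lemma 3.14]{WW2020}, of the form
$$
    |k(z,w)| \lesssim d(z,w)^{-(2n - 1/2)},
$$
where $(bD,d,S)$ has homogeneous dimension $2n$ (so $S(B(z,r))\approx r^{2n}$). The gain of a half power over the standard size $d(z,w)^{-2n}$ encodes the pointwise cancellation between $\mathcal{C}$ and its adjoint through the structure of the Levi polynomial, and mirrors the Bergman improvement from $d^{-(n+1)}$ to $d^{-(n+1/2)}$. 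With this estimate in hand, a dyadic decomposition of $bD$ into annuli $B(w,2^{-j}R)\setminus B(w,2^{-(j+1)}R)$, combined with $\mathcal{M}\sigma \lesssim \sigma$ from $A_1$, yields
$$
    \sigma(w)^{-1}\int_{bD}|k(z,w)|\sigma(z)\,dS(z) \lesssim \sigma(w)^{-1}\sum_{j=0}^{\infty}2^{-j/2}R^{1/2}\mathcal{M}\sigma(w) \lesssim R^{1/2}
$$
uniformly in $w$, and more generally $\sigma(w)^{-1}\int_{B(w,\delta)}|k|\sigma\,dS \lesssim \delta^{1/2}$.

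The main work is verifying the Riesz--Kolmogorov type condition of Lemma \ref{relatively compact}, namely
$$
    \lim_{r\to 0^+}\sup_{w\in bD}\sigma(w)^{-1}\int_{bD}\bigl|k_w(z) - \langle k_w\rangle_{B(z,r),\sigma dS}\bigr|\sigma(z)\,dS(z) = 0.
$$
For fixed $\varepsilon>0$ and $w$, I would split $bD$ into $G=\{z:d(z,w)\geq \delta\}$ and $B(w,\delta)$ for $\delta$ small, then take $r\ll \delta$. On $G$, uniform continuity of $k(z,w)$ off the diagonal (with modulus independent of $w$) reduces the integral to a constant multiple of $\varepsilon\sigma(w)^{-1}\sigma(bD)$, which is $\lesssim \varepsilon$ by $A_1$ applied to the quasi-ball $bD$. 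On $B(w,\delta)$, the integral of $|k_w|$ is $\lesssim \delta^{1/2}$ by the preceding step, and the integral of the averages $|\langle k_w\rangle_{B(z,r),\sigma dS}|$ is further split on whether $d(z,w)>2cr$ (in which case $d(\zeta,w)\approx d(z,w)$ for $\zeta\in B(z,r)$, so the averages are controlled pointwise by $|k(z,w)|$) or $d(z,w)\leq 2cr$ (in which case $B(z,r)\subseteq B(w,Cr)$ and $B(w,r)\subseteq B(z,Cr)$, and $A_1$ together with doubling of $\sigma$ lets us swap averages over $B(z,r)$ for quantities centered at $w$). Each contribution is $\lesssim \delta^{1/2}$, and shrinking $\delta$ finishes the argument.

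Applying Lemma \ref{compact operator} then yields compactness of $\mathcal{C}^*-\mathcal{C}$ on $L^1_\sigma(bD)$. The main obstacle I anticipate is justifying the cancellation estimate $|k(z,w)|\lesssim d(z,w)^{-(2n-1/2)}$ under only $C^3$ regularity of $bD$: this is the precise point at which the $C^3$ hypothesis enters, requiring control of the symmetric differences of $\eta(z,w)$ versus $\eta(w,z)$ as in \cite{LS2017,WW2020}. Once this size bound (and the analogue of \eqref{1} on $bD$) is in hand, the remaining steps are the technical but routine dyadic bookkeeping already carried out in the Bergman case.
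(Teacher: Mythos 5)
Your proposal follows the paper's proof essentially verbatim: the same appeal to Lemma \ref{compact operator} and Lemma \ref{relatively compact} with $X=bD$ and $\mu=\sigma\,dS$, the same dyadic integration against the $A_1$ weight, and the same split into the off-diagonal region (handled by uniform continuity) and the small quasi-ball near $w$ (with the two subcases $d(z,w)>2cr$ and $d(z,w)\le 2cr$), noting correctly that no $d(\cdot,bD)$ subcases are needed on the boundary. The only discrepancy is the kernel size estimate: the paper cites $|k(z,w)|\lesssim d(z,w)^{-2n+1}$ from \cite{WW2020} (a full-power gain over the critical exponent, yielding $\delta$ rather than $\delta^{1/2}$ in the local integral bound), which is stronger than the half-power-gain bound you posit, so your bookkeeping goes through unchanged.
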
 

\begin{proof}
Let $k(z,w)$ denote the kernel of the operator $\mathcal C ^*-\mathcal C.$ We consider the family of functions $k_w(z)=k(z,w)$ for $w \in bD.$ By Lemma \ref{compact operator}, it suffices to show that $\{k_w: w \in bD\}$ is relatively compact in $L^1_{\sigma}(bD)$, which we can do by verifying the criteria of Lemma \ref{relatively compact}. The infimum condition can be verified as before. This set is clearly bounded in $L^1_{\sigma}(bD)$; this follows by observing as in \cite{WW2020} that $$\int_{bD}|k(z,w)| \sigma(z) \mathop{d S(z)} \lesssim \sigma(w).$$ In particular, this is deduced from the bound $|k(z,w)| \lesssim d(z,w)^{-2n+1}$ (which relies on the domain having boundary of class $C^3$) and a dyadic integration argument similar to the one presented in Lemma \ref{BergmanLCompactness}. Similarly, we obtain that 
$$\int_{B(w,\delta)}|k(z,w)| \sigma(z) \mathop{d S(z)} \lesssim \delta \sigma(w).$$ Notice that this bound does not involve a $d(z,bD)$ term which highlights a key difference from the case of the Bergman projection. 

The second conclusion mirrors very closely the argument in the Section \ref{BergmanSection}, so we only sketch the ideas. Namely, for a fixed function $k_w$, we excise a small ball about $w$ and integrate the function $|k_w(z)-\langle k_w \rangle_{B(z,r), \sigma dS}|$ over this ball and its complement. The integral on the complement of the ball can be controlled by uniform continuity, since $k(z,w)$ is continuous off the boundary diagonal. The integral over the ball is controlled via the triangle inequality and splitting into regions as in the proof of Lemma \ref{BergmanLCompactness}. It should be noted that it is not necessary to split into subcases based on the distance of points $z$ and $w$ to $bD$ because all the integration occurs on the boundary and $A_1$ weights satisfy a true doubling property.
\end{proof}

The following lemma follows the exact same argument as Lemma \ref{Bergmanspectrum}.

\begin{lemma}\label{SzegoSpectrum}
If $\sigma \in A_1$, the number $1$ is not an eigenvalue of $\mathcal{C}^*-\mathcal{C}$ considered as an operator on $L^1_{\sigma}(bD)$.
\end{lemma}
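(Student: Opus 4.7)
The plan is to mirror the proof of Lemma \ref{Bergmanspectrum} line-by-line, replacing the volume measure $dV$ on $D$ by the induced surface measure $dS$ on $bD$, the operator $\mathcal{T}$ by $\mathcal{C}$, and the $B_1$ hypothesis by $A_1$. Suppose toward a contradiction that there exists $f \in L^1_\sigma(bD)$ with $(\mathcal{C}^*-\mathcal{C})f = f$.

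First I would observe that every $A_1$ weight on $bD$ is bounded below by a positive constant. Since $bD$ is compact, one can choose a single quasi-ball $B$ containing all of $bD$, and the $A_1$ condition applied to $B$ yields $\|\sigma^{-1}\|_{L^\infty(bD)} \le [\sigma]_{A_1}/\langle\sigma\rangle_B < \infty$. Consequently $\|g\|_{L^p(bD)} \lesssim \|g\|_{L^p_\sigma(bD)}$ for every $p \ge 1$, which is the analog of the remark in Lemma \ref{Bergmanspectrum} that a $B_1$ weight is bounded below.

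Next, following the model of \cite[Proposition 3.16]{WW2020}, I would establish a smoothing estimate for the auxiliary commutator: there exists $\varepsilon>0$ such that $\mathcal{C}^*-\mathcal{C}$ maps $L^p(bD)$ boundedly into $L^{p+\varepsilon}(bD)$ for every $p\ge 1$. The key input is the improved kernel bound $|k(z,w)| \lesssim d(z,w)^{-(2n-1)}$ recalled in the proof of Lemma \ref{CompactnessSzego}, which is strictly more integrable than the $d(z,w)^{-2n}$ bound on the Cauchy-Szeg\H o kernel itself, relative to the growth $S(B(z,r)) \approx r^{2n}$. A standard fractional-integration or Schur-test argument on the space of homogeneous type $(bD,d,S)$ then yields the desired gain. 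Combining the two steps with the eigenvalue equation gives
$$\|f\|_{L^{1+\varepsilon}(bD)} = \|(\mathcal{C}^*-\mathcal{C})f\|_{L^{1+\varepsilon}(bD)} \lesssim \|f\|_{L^1(bD)} \lesssim \|f\|_{L^1_\sigma(bD)},$$
and iterating this bootstrap finitely many times yields $f \in L^2(bD)$.

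Finally, $\mathcal{C}^*-\mathcal{C}$ is skew-adjoint on $L^2(bD)$, so its spectrum is contained in $i\mathbb{R}$. In particular $1$ is not an eigenvalue on $L^2(bD)$, contradicting $(\mathcal{C}^*-\mathcal{C})f = f$. The main obstacle is the second step: verifying the $L^p(bD)\to L^{p+\varepsilon}(bD)$ smoothing of $\mathcal{C}^*-\mathcal{C}$ under only $C^3$ boundary regularity. The argument should track \cite{WW2020} closely, and is in fact slightly simpler than in the Bergman case since the relevant kernel bounds carry no distance-to-boundary factor and $A_1$ weights enjoy a genuine doubling property.
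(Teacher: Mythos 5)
Your proposal is correct and follows essentially the same route as the paper, which simply invokes the argument of Lemma \ref{Bergmanspectrum}: bootstrap an eigenfunction from $L^1_\sigma(bD)$ into $L^2(bD)$ using the lower bound on the $A_1$ weight and the $L^p(bD)\to L^{p+\varepsilon}(bD)$ smoothing of $\mathcal{C}^*-\mathcal{C}$ (the surface analog of \cite[Proposition 3.16]{WW2020}, resting on the kernel bound $|k(z,w)|\lesssim d(z,w)^{-(2n-1)}$ against $S(B(z,r))\approx r^{2n}$), then conclude via skew-adjointness on $L^2(bD)$. Your identification of the smoothing step as the one point requiring verification under $C^3$ regularity matches the paper's reliance on \cite{WW2020}.
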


Therefore, we can prove Proposition \ref{SzegoAuxiliaryInvertible}:
\begin{proof}[Proof of Proposition \ref{SzegoAuxiliaryInvertible}]
This proposition follows from Lemma \ref{CompactnessSzego}, Lemma \ref{SzegoSpectrum}, and the spectral theorem for compact operators on a Banach space.
\end{proof}

To complete the proof of Theorem \ref{CauchySzegoWeakType}, it remains to prove Proposition \ref{SzegoAuxiliaryWeakType}.

\begin{proof}[Proof of Proposition \ref{SzegoAuxiliaryWeakType}]
As in \cite{LS2017}, write $\mathcal{C}=\mathcal{C}^\sharp+\mathcal{R}$. It is proven in \cite{LS2017} that $\mathcal{C}^\sharp$ is a Calder\'{o}n-Zygmund operator with respect to the quasi-metric $d$. Thus, by standard theory, $\mathcal{C}^\sharp$ maps $L_{\sigma}^1(bD)$ to $L_{\sigma}^{1,\infty}(bD)$ boundedly for $\sigma \in A_1$. 

On the other hand, the operator $\mathcal{R}$ has a kernel $r(z,w)$ that satisfies
$$
    \int_{bD}|r(z,w)| \sigma(w) \mathop{d S(w)} \lesssim \sigma(z)
$$ 
and
$$
    \int_{bD}|r(z,w)| \sigma(z) \mathop{d S(z)} \lesssim \sigma(w)
$$
for $\sigma \in A_1$ (see \cite{WW2020}). A simple argument using Fubini's theorem shows that $\mathcal{R}$ is bounded on $L^1_{\sigma}(bD).$ This completes the proof. 
\end{proof} 

%%%%%%%%%%%%%%%%%%%%%%%%%%%%%%%%%%%%%%%%%

\section{Kolmogorov and Zygmund Inequalities}\label{BonusEstimates}

We first prove the general fact that the weak-type $(1,1)$ estimate implies the Kolmogorov inequality on finite measure spaces.

\begin{thm}\label{KolmogorovGeneral}
Let $T$ be a linear operator and $(X,\mu)$ a finite measure space. If $T$ maps $L^1(X,\mu)$ to $L^{1,\infty}(X,\mu)$ boundedly and $0<p<1$, then $T$ extends boundedly from $L^1(X,\mu)$ to $L^p(X,\mu)$. 
\end{thm}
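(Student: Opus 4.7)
The plan is to use the layer-cake representation of the $L^p$ norm and split the integral at a threshold that exploits the finiteness of $\mu$ on the small-$\lambda$ side and the weak-type hypothesis on the large-$\lambda$ side. This is a classical device, and the finiteness of $\mu(X)$ is what prevents a divergence near $\lambda=0$ arising from the fact that $L^{1,\infty}$ does not in general embed in $L^p$.

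Concretely, for $f \in L^1(X,\mu)$, I would write
$$
\|Tf\|_{L^p(X,\mu)}^p = p\int_0^\infty \lambda^{p-1}\mu(\{x\in X:|Tf(x)|>\lambda\})\,d\lambda
$$
and split the integral at some $\lambda_0>0$ to be chosen. On $(0,\lambda_0)$ I would use the trivial bound $\mu(\{|Tf|>\lambda\})\le \mu(X)$, and on $(\lambda_0,\infty)$ I would use the weak-type $(1,1)$ hypothesis $\mu(\{|Tf|>\lambda\})\le C\|f\|_{L^1(X,\mu)}/\lambda$. This yields
$$
\|Tf\|_{L^p(X,\mu)}^p \le \mu(X)\lambda_0^{p} + \frac{pC\|f\|_{L^1(X,\mu)}}{1-p}\lambda_0^{p-1},
$$
where convergence of the second integral requires exactly $p<1$.

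To finish, I would optimize in $\lambda_0$. Choosing $\lambda_0 = C\|f\|_{L^1(X,\mu)}/\mu(X)$ balances the two terms and gives
$$
\|Tf\|_{L^p(X,\mu)}^p \le \frac{C^p}{1-p}\mu(X)^{1-p}\|f\|_{L^1(X,\mu)}^{p},
$$
which is the desired inequality with constant $\bigl(C^p\mu(X)^{1-p}/(1-p)\bigr)^{1/p}$. Since neither the weak-type constant nor $\mu(X)$ depends on $f$, this shows $T$ extends boundedly from $L^1(X,\mu)$ to $L^p(X,\mu)$.

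There is no real obstacle here; the only subtle point is that one must use the finiteness of $\mu(X)$ to bound the small-$\lambda$ contribution, since $L^{1,\infty}(X,\mu)\not\subseteq L^p(X,\mu)$ without such a hypothesis. All the steps above are purely real-variable and use only that $T$ is (quasi-)linear with a weak-type $(1,1)$ bound, so the argument will apply verbatim when later specialized to $T=\mathcal{B}$ or $T=\mathcal{S}$ with $d\mu=\sigma\,dV$ or $d\mu=\sigma\,dS$, provided one checks that the relevant weighted measure is finite (which holds for $\sigma\in B_1$ on $D$ and $\sigma\in A_1$ on $bD$).
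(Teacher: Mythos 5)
Your proof is correct and follows essentially the same route as the paper: the layer-cake formula split at a threshold, the trivial bound $\mu(X)$ for small $\lambda$, the weak-type bound for large $\lambda$, and then a choice of threshold (the paper simply takes $t=\|f\|_{L^1(X,\mu)}$ rather than optimizing). No issues.
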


\begin{proof}
Using the distribution function and the weak-type $(1,1)$ assumption, we have for any $t>0$:
\begin{align*}
 \|Tf&\|_{L^p(X,\mu)}^p  =  \int_{0}^{\infty} p \lambda^{p-1} \mu(\{x\in X:|Tf(x)|>\lambda\}) \mathop{d \lambda}\\
 & = \int_{0}^{t} p \lambda^{p-1} \mu(\{x\in X:|Tf(x)|>\lambda\}) \mathop{d \lambda} + \int_{t}^{\infty} p \lambda^{p-1} \mu(\{x\in X:|Tf(x)|>\lambda\}) \mathop{d \lambda} \\
 & \leq t^p \mu(X)+ \frac{p}{1-p} t^{p-1} \|f\|_{L^1(X,\mu)}.
\end{align*}
Taking $t= \|f\|_{L^1(X,\mu)}$ completes the proof.
\end{proof}

\begin{proof}[Proof of Corollary \ref{KolmogorovBergman}]
This follows immediately from Theorem \ref{BergmanWeakType} and Theorem \ref{KolmogorovGeneral}.
\end{proof}

\begin{proof}[Proof of Corollary \ref{KolmogorovSzego}]
This follows immediately from Theorem \ref{CauchySzegoWeakType} and Theorem \ref{KolmogorovGeneral}.
\end{proof}

Before proving our Zygmund inequalities, we first define the space $L\log^+ L$, which falls within the scope of Orlicz spaces. We call a function $\Phi:[0,\infty]\rightarrow [0,\infty]$ a Young function if $\Phi$ is continuous, convex, increasing, and satisfies $\Phi(0)=0$. Given a measure space $(X,\mu)$ and a Young function $\Phi$, the associated Orlicz space, $L^{\Phi}(X,\mu)$, is the linear hull of all measurable functions on $X$ satisfying
$$
    \int_X\Phi(|f|)\,d\mu < \infty
$$
equipped with the following Luxemburg norm:
$$
    \|f\|_{L^{\Phi}(X,\mu)}:=\inf\left\{\lambda >0 : \int_X \Phi\left(\frac{|f|}{\lambda}\right)\,d\mu \leq 1\right\}.
$$

The Zygmund space $L\log^+ L(X,\mu)$ is defined to be the Orlicz space $L^{\Psi}(X,\mu)$ associated with the Young function $\Psi(t)=t\log^+ t$, where $\log^+(t):=\max\{\log(t),0\}$. We use the notation $(L\log^+ L)_{\sigma}(D)$ to represent $L \log^+ L(D,\sigma\,dV)$ for a domain $D\subseteq \mathbb{C}^n$ and a weight $\sigma$ on $D$ and we similarly write $(L\log^+ L)_{\sigma}(bD)$ for $L\log^+ L (bD,\sigma \,dS)$ with $\sigma$ a weight on $bD$. We refer to \cites{KR1961,RR1991} for thorough treatments of Orlicz spaces.

We next prove that the weak-type $(1,1)$ and $L^2$ bounds imply the Zygmund inequality on general finite measure spaces. 
\begin{thm}\label{ZygmundGeneral}
Let $T$ be a linear operator and $(X,\mu)$ a finite measure space. If $T$ is bounded on $L^2(X,\mu)$ and maps $L^1(X,\mu)$ to $L^{1,\infty}(X,\mu)$ boundedly, then $T$ extends boundedly from $L \log^+ L(X,\mu)$ to $L^1(X,\mu)$.
\end{thm}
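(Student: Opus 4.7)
The plan is to exploit the layer-cake representation $\|Tf\|_{L^1(X,\mu)} = \int_0^\infty \mu(\{|Tf|>\lambda\})\,d\lambda$ and run a Calder\'on-Zygmund style splitting at every threshold. For each $\lambda>0$ I would write $f = g_\lambda + b_\lambda$ with $g_\lambda := f\chi_{\{|f|\le\lambda/2\}}$ and $b_\lambda := f\chi_{\{|f|>\lambda/2\}}$, so that
\[
\mu(\{|Tf|>\lambda\}) \le \mu(\{|Tg_\lambda|>\lambda/2\}) + \mu(\{|Tb_\lambda|>\lambda/2\}).
\]
Chebyshev together with the $L^2$ boundedness controls the good part, while the weak-type $(1,1)$ hypothesis handles the bad part.

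After applying these two hypotheses, Fubini converts the good-part contribution $\int_0^\infty \lambda^{-2}\int_{\{|f|\le\lambda/2\}}|f|^2\,d\mu\,d\lambda$ into a constant multiple of $\|f\|_{L^1(X,\mu)}$, since the inner integral $\int_{2|f|}^\infty \lambda^{-2}\,d\lambda = (2|f|)^{-1}$ provides exactly the right cancellation. The bad-part contribution involves $\int_0^\infty \lambda^{-1}\int_{\{|f|>\lambda/2\}}|f|\,d\mu\,d\lambda$, which diverges at the origin, so I would peel off $\lambda\in(0,2)$ using the trivial bound $\mu(\{|Tf|>\lambda\})\le\mu(X)<\infty$, contributing $2\mu(X)$; on $\lambda\ge 2$ Fubini produces $\int_X |f|\int_2^{2|f|}\lambda^{-1}\,d\lambda\,d\mu = \int_X|f|\log^+|f|\,d\mu$. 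Combining yields
\[
\|Tf\|_{L^1(X,\mu)} \lesssim \mu(X) + \|f\|_{L^1(X,\mu)} + \int_X |f|\log^+|f|\,d\mu.
\]

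To pass to the Luxemburg norm of $L\log^+L(X,\mu)$, I would normalize $\|f\|_{L\log^+L(X,\mu)} = 1$ so that $\int_X|f|\log^+|f|\,d\mu \le 1$; splitting $\{|f|\le 1\}$ from $\{|f|>1\}$ and using $\mu(X)<\infty$ then bounds $\|f\|_{L^1(X,\mu)} \le \mu(X) + 1$, yielding $\|Tf\|_{L^1(X,\mu)} \lesssim \mu(X) + 1$, after which homogeneity of both sides in $f$ finishes the argument. The main technical point is arranging the decomposition so that both Fubini calculations converge simultaneously: the $L^2$ bound is exactly strong enough to make the good-part integral proportional to $\|f\|_{L^1(X,\mu)}$, while the weak-$(1,1)$ tail integral only produces the desired $L\log^+L$ quantity after excising a neighborhood of the origin, which is precisely what the finiteness of $\mu$ permits.
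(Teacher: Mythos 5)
Your argument is correct and follows essentially the same route as the paper: truncate $f$ at height (a multiple of) $\lambda$, apply Chebyshev with the $L^2$ bound to the bounded piece and the weak-type $(1,1)$ bound to the tail, and integrate in $\lambda$ via Fubini to produce $\mu(X)+\|f\|_{L^1(X,\mu)}+\int_X|f|\log^+|f|\,d\mu$. The only differences are cosmetic---where you excise $\lambda\in(0,2)$ from the layer-cake integral by the trivial bound $\mu(X)$, the paper instead passes to the equivalent Young function $\Phi_1(t)=\max\{t-2,0\}$---and your claimed bound $\|f\|_{L^1(X,\mu)}\le\mu(X)+1$ under the normalization should read, say, $e\mu(X)+1$, since $\log^+t<1$ for $1<t<e$; this changes nothing.
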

\begin{proof}
Let $f \in L \log^+ L(X,\mu)$ be given and normalized to assume $\|f\|_{L\log^+L(X,\mu)}=1$. %By density, we may assume that $f$ is continuous with compact support. 
Observe that $L^1(X,\mu)$ is the Orlicz space $L^{\Phi}(X,\mu)$ with Young function $\Phi(t)=t$. Define $\Phi_1$ by 
$$
\Phi_1(t)=
    \begin{cases}
        0 & \text{if}\quad 0\leq t<2\\
        t-2 & \text{if}\quad 2\leq t \leq \infty
    \end{cases}
$$
and notice that $\Phi$ and $\Phi_1$ are equivalent Young functions in the sense that 
$$
    \Phi_1(t)\leq\Phi(t)\leq\Phi_1(2t)
$$
for all $t\ge 2$. %Similarly, the Young function $\Psi(t)=t\log^+(t)$ is equivalent to $\Psi_1(t)=t\log(1+t)$, and 
Therefore by \cite[Theorem 13.2 and Theorem 13.3]{KR1961}, it suffices to prove 
$$
    \|Tf\|_{L^{\Phi_1}(X,\mu)}\lesssim 1.
$$

For a fixed $\lambda>0$, write $f=f_0+f_{\infty}$, where $f_0:= f\chi_{\{|f|\leq\lambda\}}$ and $f_{\infty}:=f\chi_{\{|f|>\lambda\}}$. Using the assumed bounds of $T$ and the distribution function, we have 
\begin{align*}
    \mu(\{|Tf|>2\lambda\}) &\leq \mu(\{|Tf_0|>\lambda\}) + \mu(\{|Tf_{\infty}|>\lambda\})\\
    &\leq \frac{1}{\lambda^2}\|f_0\|_{L^2(X,\mu)}^2+\frac{1}{\lambda}\|f_{\infty}\|_{L^1(X,\mu)}\\
    &\approx \frac{1}{\lambda^2}\int_{0}^{\lambda}s\mu(\{|f|>s\})\,ds + \frac{1}{\lambda}\int_{\lambda}^{\infty} \mu(\{|f|>s\})\,ds.
\end{align*}
Use the distribution function, a change of variables, the above estimate, and Fubini's Theorem, direct estimates, and the normalization $\|f\|_{L\log^+ L(X,\mu)}=1$ to deduce
\begin{align*}
    \int_X \Phi_1(|Tf|)\,d\mu &= \int_{2}^{\infty} \mu(\{|Tf|>\lambda\})\,d\lambda \approx \int_{1}^{\infty}\mu(\{|Tf|>2\lambda\})\,d\lambda\\
    &\leq \int_{1}^{\infty}\frac{1}{\lambda^2}\int_{0}^{\lambda}s\mu(\{|f|>s\})\,dsd\lambda + \int_{1}^{\infty}\frac{1}{\lambda}\int_{\lambda}^{\infty} \mu(\{|f|>s\})\,dsd\lambda\\
    &= \int_0^1 s\mu(\{|f|>s\}) \int_1^{\infty}\frac{1}{\lambda^2}\,d\lambda ds + \int_1^{\infty}s\mu(\{|f|>s\})\int_s^{\infty}\frac{1}{\lambda^2}\,d\lambda ds \\
    &\quad\quad + \int_{1}^{\infty}\mu(\{|f|>s\})\int_{1}^s\frac{1}{\lambda}\,d\lambda ds\\
    &=\int_0^1s\mu(\{|f|>s\})\,ds+\int_1^{\infty}(1+\log s)\mu(\{|f|>s\})\,ds\\
    &\leq \mu(X) + \int_{X} \Psi(|f|)\,d\mu\\
    &\lesssim 1,
\end{align*}
where $\Psi(t)=t\log^+(t)$. Thus $\|Tf\|_{L^{\Phi_1}(X,\mu)}\lesssim 1$ as desired.
\end{proof}

\begin{proof}[Proof of Corollary \ref{ZygmundBergman}]
This follows immediately from Theorem \ref{BergmanWeakType} and Theorem \ref{ZygmundGeneral}.
\end{proof}

\begin{proof}[Proof of Corollary \ref{ZygmundSzego}]
This follows immediately from Theorem \ref{CauchySzegoWeakType} and Theorem \ref{ZygmundGeneral}.
\end{proof}

\begin{bibdiv}
\begin{biblist}
\bib{B198182}{article}{
title={In\'egalit\'e \`a poids pour le projecteur de Bergman dans la boule unit\'e de $\mathbb{C}^n$},
author={D. Bekoll\'e},
journal={Studia Math.},
volume={71},
date={1981/82},
number={3},
pages={305--323}
}

\bib{CW1971}{book}{
title={Analyse harmonique non-commutative sur certains espaces homog\`enes. (French) \'Etude de certaines int\'egrales singuli\`eres},
author={R. R. Coifman},
author={G. Weiss},
publisher={Springer-Verlag},
address={Berlin-New York},
series={Lecture Notes in Mathematics},
volume={242},
date={1971}
}

\bib{DHZZ2001}{article}{
title={Bergman projection and Bergman spaces},
author={Y. Deng},
author={L. Huang},
author={T. Zhao},
author={D. Zheng},
journal={J. Operator Theory},
volume={46},
date={2001},
number={1},
pages={3--24}
}

\bib{E1995}{article}{
title={Criteria for integral operators in $L^{\infty}$ and $L^1$ spaces},
author={S. P. Eveson},
journal={Proc. Amer. Math. Soc.},
volume={123},
number={12},
date={1995},
pages={3709--3716}
}

\bib{FR1975}{article}{
title={Projections on spaces of holomorphic functions in balls},
author={F. Forelli},
author={W. Rudin},
journal={Indiana Univ. Math. J.},
volume={24},
date={1974/75},
pages={593--602}
}

\bib{HWW2020}{article}{
title={A Bekoll\'e-Bonami Class of Weights for Certain Pseudoconvex Domains},
author={Z. Huo},
author={N. A. Wagner},
author={B. D. Wick},
date={2020},
journal={Arxiv e-prints: 2001.08302}
}

\bib{HMW1973}{article}{
title={Weighted norm inequalities for the conjugate function and Hilbert transform},
author={R. Hunt},
author={B. Muckenhoupt},
author={R. Wheeden},
journal={Trans. Amer. Math. Soc.},
volume={176},
date={1973},
pages={227--251}
}

\bib{IK2009}{article}{
title={Compactness of embeddings of Sobolev type on metric measure spaces},
author={I. A. Ivanishko},
author={V. G. Krotov},
journal={Math. Notes},
volume={86},
date={2009},
number={5-6},
pages={775--788}
}

\bib{K1999}{article}{
title={On compactness of embedding for Sobolev spaces defined on metric spaces},
author={A Kalamajska},
journal={Ann. Acad. Sci. Fenn. Math.},
volume={24},
number={1},
pages={123--132},
date={1999}
}

\bib{KR1961}{book}{
title={Convex functions and Orlicz spaces},
author={M. A. Krasnosel'ski\u i},
author={J. A. Ruticki\u i},
publisher={P. Noordhoff Ltd.},
address={Groningen},
date={1961}
}

\bib{LS2012}{article}{
title={The Bergman projection in $L^p$},
author={L. Lanzani},
author={E. M. Stein},
journal={Illinois J. Math.},
volume={56},
date={2012},
number={1},
pages={127 -- 154}
}

\bib{LS2017}{article}{
title={The Cauchy-Szeg\H o projection for domains in $\mathbb{C}^n$ with minimal smoothness},
author={L. Lanzani},
author={E. M. Stein},
journal={Duke Math. J.},
volume={166},
date={2017},
number={1},
pages={125--176}
}

\bib{M2003}{article}{
title={Subelliptic estimates and scaling in the $\bar \partial$-Neumann problem},
author={J. D. McNeal},
journal={Contemp. Math.},
volume={332},
date={2003},
pages={197-217}
}

\bib{M1994}{article}{
title={The Bergman projection as a singular integral operator},
author={J. D. McNeal},
journal={J. Geom. Anal.},
volume={4},
date={1994},
number={1},
pages={91--103}
}

\bib{PS1977}{article}{
title={Estimates for the Bergman and Szeg\H o projections on the pseudo-convex domains},
author={D. E. Phong},
author={E. M. Stein},
journal={Duke Math. J.},
volume={44},
date={1977},
number={3},
pages={695--704},
}

\bib{R1986}{book}{
title={Holomorphic functions and integral representations in several complex variables},
author={M. Range},
publisher={Springer},
address={Berlin},
date={1986},
}

\bib{RR1991}{book}{
title={Theory of Orlicz space},
author={M. M. Rao},
author={Z. D. Ren},
series={Monographs and Textbooks in Pure and Applied Mathematics},
volume={146},
publisher={Marcel Dekker Inc.},
address={New York},
date={1991}
}

\bib{WW2020}{article}{
title={Weighted $L^p$ Estimates for the Bergman and Szeg\H{o} Projections on Strongly Pseudoconvex Domains with Near Minimal Smoothness},
author={N. A. Wagner},
author={B. D. Wick},
journal={Arxiv e-prints: 2004.10248},
date={2020}
}

\bib{ZJ1964}{article}{
title={The general form of a linear functional in $H_p'$ (Russian)},
author={V. P. Zaharjuta},
author={V. I. Judovi\v c},
journal={Uspehi Mat. Nauk},
volume={19},
date={1964}, 
number={2 (116)}, 
pages={139--142}
}

\bib{Zhu}{book}{
title={Spaces of holomorphic functions on the unit ball},
author={K. Zhu},
edition={Third edition},
series={Graduate Texts in Mathematics},
publisher={Springer},
address={New York},
date={2005},
volume={226}
}
\end{biblist}
\end{bibdiv}

\end{document}